\theoremstyle{plain}
\newtheorem{theorem}{Theorem}[section]
\newtheorem{proposition}[theorem]{Proposition}
\newtheorem{corollary}[theorem]{Corollary}
\theoremstyle{definition}
\newtheorem{remark}[theorem]{Remark}
\numberwithin{equation}{section}
\title[The subharmonic bifurcation of Stokes waves]{The subharmonic bifurcation of Stokes waves on  vorticity flow}
\author{Vladimir Kozlov$^1$}
\address{$^1$Department of Mathematics, Link\"oping University, SE-581 83 Link\"oping, Sweden}
\begin{document}
	
\begin{abstract}
% We consider a family of Stokes waves on vorticity flow parameterized by  $t\in [0,\infty)$. For large $t$ the Stokes waves approach the Stokes e%xtreme wave. We prove that there are infinitely many subharmonic bifurcation points on such branches.

Untill $1980$ one of the main subjects of study in the theory of nonlinear water waves were the Stokes and solitary waves (regular waves).
To that time small amplitude regular waves were constructed and the existence of large amplitude water waves of the same type was proved by using branches of water waves starting
from a trivial (horizontal) wave and ending at extreme waves. Then in papers Chen $\&$ Saffman \cite{Che} and Saffman \cite{Sa} numerical evidence was presented for existence of other type of waves as a result of bifurcations
from a branch of ir-rotational Stokes waves on flow of infinite depth. It was demonstrated that
the Stokes branch has infinitely many bifurcation points when it approaches the extreme wave
and periodic waves with several crests of different height on the period bifurcate from the main
branch. %Later in Vanden-Broeck \cite{Van2,Van1}  it was shown numerically that new waves can build a new
%branch with infinitely many bifurcations and this branch can approach an extreme wave different
%from previously known.
The only theoretical works dealing with this phenomenon are Buffoni, Dancer $\&$ Toland \cite{BDT1,BDT2}
where  it was proved the existence of sub-harmonic bifurcations bifurcating from the Stokes branch  for the ir-rotational flow of infinite depth approaching the extreme wave. %These papers are based on the Nekrasov equation
%and it cannot be applied to more general case.

The aim of this paper is to develop new tools and give rigorous
proof of existence of subharmonic bifurcations in the case of rotational flows of finite depth. The whole paper is devoted to the proof of this result formulated in Theorem \ref{Tmain22}.

\end{abstract}

\maketitle

\section{Introduction}

 \subsection{Background}

In this paper we
study two-dimensional flows
of an inviscid, incompressible, heavy fluid, say, water under the
assumption that it is bounded above by a free surface, where the
pressure is constant, and by a horizontal rigid bottom from below. Both irrotational and vortical flows are of
interest and considered here.

It was Stokes \cite{Stokes}, who had initiated mathematical studies of steady
water waves as early as 1847. On the basis of approximations
developed for periodic waves with a single crest per wavelength he
made conjectures about the behaviour of such waves now referred to
as {\it Stokes waves}. These conjectures, to a large extent,
determined the line of investigations of steady waves in the 20th
century (see the paper \cite{PT} by Plotnikov and Toland and references
cited therein). Another line of investigations originated from the
observation made by Scott Russell \cite{Rus} also in the 1840s concerns {\it solitary waves}. Since the 1940s, much
attention was given to proving the existence of such a wave and
investigation of its properties (see, for example, the survey paper
\cite{Gr} by Groves). In particular in Amick $\&$ Toland\cite{AT1,AT2} the existence of global
branches of Stokes/solitary ir-rotational water waves was proved.
Investigation of these classical free-boundary problems requires
most of modern methods in nonlinear functional analysis and
nonlinear dispersive wave theory, in particular, bifurcation theory,
complex variable methods, PDE methods, {\it etc.}

Extreme waves (or, equivalently, “waves of greatest height” according to Stokes) is a unique
phenomenon in the mathematical theory of water waves. These are large-amplitude travelling
waves with sharp crests of included angle $120$ degrees. It is remarkable that while being a
highly nonlinear phenomenon they were predicted by Sir George Stokes already in 1880s, see \cite{Stokes2}.  Stokes also argued that the stagnation by itself
forces the surface profile to have a sharp crest of included angle $2\pi/3$. This property is known
as the Stokes conjecture about waves of greatest height, which stimulated the development of
the theory for many years.

 The first existence of Stokes waves that are arbitrary close to the stagnation is due
to Keady $\&$ Norbury \cite{KNor}, who used a global bifurcation theory for positive operators applied
to the Nekrasov equation. Proof of existence of extreme waves by passing to the limit along
a sequence of waves approaching stagnation was done by Toland \cite{T1a} in 1978 for the
infinite depth case and by Amick and Toland \cite{AT1} for waves of finite depth. The Stokes conjecture concerning the angle $2\pi/3$  was verified independently by Amick, Fraenkel $\&$ Toland in \cite{T2} and by Plotnikov
\cite{P2 }.  Structure of water waves with a stagnation point was studied
by Varvaruca $\&$  Weiss in \cite{VW1}, who proved that if the stagnation point is isolated then the wave  is of class $C^1$ from both sides of it and the angle can be $2\pi/3$ or $\pi$.
All previously mentioned results concerned irrotational water waves, while the case of waves
with vorticity is much less studied. There is also a qualitative difference. In their study
Varvaruca $\&$ Weiss \cite{VW2} found (without proving the existence) that surface profiles near stagnation
points are either Stokes corners, horizontally flat, or horizontal cusps, though it is not
known if the last two options are possible. It was shown in Varvaruca
\cite{Varv} that there exists a family of periodic solutions to the water wave problem with ”negative”
vorticity converging to an extreme wave enjoying stagnation at every crest. Unfortunately, it
was not possible to show that the limiting wave is not ”trivial”, that is not a laminar flow
with surface stagnation. This difficulty was resolved in Kozlov $\&$ Lokharu \cite{KL1} by using a different approach and
extreme waves  were found. A further analysis was made in Kozlov $\&$ Lokharu \cite{KL4,KL3}, where authors
obtained higher-order asymptotics for the surface profile near stagnation points. We note also the papers \cite{P1} by Plotnikov, where a global branches of solitary waves in ir-rotational  case was constructed and bifurcation points were studied.

 Until
2000, only a few works treated steady water waves with vorticity, but activity in this area
became very intensive during the last twenty years; see the survey \cite{Str} by Strauss, which covers works
that had been published in the field of steady waves during the past nine decades. This review
contains almost 100 references on both irrotational and vortical waves. Existence of small amplitude  Stokes waves is established in \cite{CSst} (unidirectional water waves), \cite{KN14} (water waves with counter-currents)  and existence of
solitary waves for near-critical values of Bernoulli's constant is proved in Groves $\&$ Wahlen \cite{GrW} and Hur \cite{Hur}. We mention here the
construction of a global branch of Stokes waves without counter-currents in Constantin $\&$ Strauss \cite{CSst,C8} and a global branch of solitary waves in Chen, Walsh $\&$ Wheeler \cite{CWW}.

Stokes and solitary waves (regular waves) were the main subject of study up to 1980. In 1980 (see Chen \cite{Che} and Saffman \cite{Sa}) it was discovered numerically and in 2000 (see \cite{BDT1,BDT2}) this was supported theoretically for the ir-rotational case for flow of infinite depth that there exist new types of periodic waves with several crests on the period (the Stokes wave has only one crest). This waves appear as a result of bifurcation from a branch of Stokes waves when they approach the wave of greatest amplitude. The only theoretical investigation is \cite{BDT1,BDT2} which was devoted to subharmonic bifurcations from a branch of Stokes waves on a an ir-rotational flow of infinite depth.

\vspace{4mm}
\subsection{The method}

%Up to 1980, most of research in the classical $2$D water-wave theory was concentrated on studying of Stokes and solitary waves (regular waves). %Even after 1980 ir-regular waves were appeared as a result of presence of flows with counter currents. The only work treated subharmonic %bifurcation is \cite{BDT1,BDT2}, which  used the technique applicable only to ir-rotational case. Modern numerical results indicate that  there %is a huge amount of new water waves different from  Stokes and solitary waves  even in the ir-rotational case as a result of such bifurcations.
%The waves appearing as a result of bifurcation from a branch of regular waves approaching an extreme wave and from similar branches of ir-regular %waves is quite general and common. This mechanism of generation of new types of waves can produce not only subharmonic bifurcations but waves of %more complicated structure even in the ir-rotational case. The aim of this project is to study these bifurcations and  new types of water waves %bifurcating from regular and irregular branches of waves. It requires new mathematical tools and technique  in order to achieve this goal. This %project will give this new approach and reveal new type of water waves, which can be essentially more complicated than the classical regular %waves.

 Starting point of our study is a branch of Stokes waves approaching an extreme wave. This is based on our papers
\cite{KL1} and \cite{KL3}, where it is proved existence of branches of Stokes waves approaching an extreme wave in the rotational case and for a  flow of finite depth. We choose the period of Stokes wave as a parameter for constructed branches. This is important to guarantee that an extreme periodic wave will appear as the limit configuration for the branch. %In previous works (see \cite{CSst,C8}) the period of Stokes waves  was fixed and another parameter for the branch was used. As a result one could not guarantee that the limit configuration will have a finite depth flow.

 In further analysis we need certain smoothness properties of extreme waves, which was the subject of the study in the paper  \cite{KL4} (see also a short presentation of the results in \cite{KL3}).  In previous papers (see Varvaruca \cite{Varv}, Varvaruca $\&$ Weiss \cite{VW1,VW2}) it was prove that the extreme wave is $C^1$ from both sides of the stagnation point. We find an optimal asymptotics for the limit wave near the stagnation in \cite{KL4}. Such properties of the extreme wave are important in order to show that there are infinitely many bifurcations at the branch of Stokes waves approaching an extreme wave.

%  In the paper \cite{KL2} we consider the first variation of an extreme Stokes wave. We study the spectrum of this operator. It is proved that %the spectrum contain infinitely many negative eigenvalues which can be arbitrary large. It is show that large negative eigenvalues are simple and %their asymptotics was found. This result is important for proving that the branch of Stokes waves has infinitely many bifurcations.

The above results are  first steps in our study of subharmonic bifurcation. Our study of bifurcations is based on the variational structure of operators. Earlier such structure was used by Plotnikov in \cite{P1}, where he investigated bifurcations of solitary waves, by Buffoni, Dancer $\&$  Toland in \cite{BDT1,BDT2} and by Shargorodsky $\&$ Toland in \cite{ShTo}, devoted to irrotational Stokes waves.  Since our family of operators consists of potential operators we apply the bifurcation theorem for potential family of operators whose coefficients comes from the branch of Stokes waves connecting the stream solution and extreme wave. In order to find subharmonic bifurcations we are looking for solutions with period multiple of the period of coefficient of the main branch of Stokes waves. The principle difficulties here is to separate bifurcation points corresponding to the Stokes waves and to the waves with multiple period (subharmonic waves) and to study the change of the Morse index. This is done by using the results from \cite{KL2}  and new study of negative spectrum of the first variation of water waves close to the extreme wave presented here. Two properties of negative spectrum are important: when we approaching the extreme wave the negative eigenvalues can be arbitrary large in absolute value, the corresponding eigenfunctions are concentrated neat the stagnation point.

\subsection{Formulation of the problem}

We consider steady surface waves in a two-dimensional channel bounded below by a flat,
rigid bottom and above by a free surface that does not touch the bottom. The surface tension is neglected and the water motion can be rotational.
In appropriate Cartesian coordinates $(X, Y )$, the bottom coincides with the
$X$-axis and gravity acts in the negative $Y$ -direction. We choose the frame of reference so that the velocity field is time-independent as well as the free-surface profile
which is supposed to be the graph of $Y = \xi(X)$, $X \in \Bbb R$, where $\xi$ is a positive and
continuous unknown function. Thus
$$
{\mathcal D}_\xi = \{X\in \Bbb R, 0 < Y < \xi(X)\},\;\;{\mathcal B}_\xi=\{X\in\Bbb R,\;Y=\xi(X)\}
$$
is the water domain and the free surface respectively. We will use the stream function $\Psi$, which is connected with the velocity vector $({\bf u},{\bf v})$ as ${\bf u}=-\Psi_Y$ and ${\bf v}=\Psi_X$.

We assume that $\xi$ is a positive, periodic function having period $\Lambda>0$ and that $\xi$   is even and strongly monotonically decreasing on the interval $(0,\Lambda/2)$.  Since the surface tension is neglected, $\Psi$ and
$\xi$ after a certain scaling satisfy the following free-boundary problem (see for example \cite{KN14}):
\begin{eqnarray}\label{K2a}
&&\Delta \Psi+\omega(\Psi)=0\;\;\mbox{in ${\mathcal D}_\xi$},\nonumber\\
&&\frac{1}{2}|\nabla\Psi|^2+\xi=R\;\;\mbox{on ${\mathcal B}_\xi$},\nonumber\\
&&\Psi=1\;\;\mbox{on ${\mathcal B}_\xi$},\nonumber\\
&&\Psi=0\;\;\mbox{for $Y=0$},
\end{eqnarray}
where $\omega\in C^{1,\gamma}$, $\gamma\in (0,1)$, is a vorticity function and $R$ is the Bernoulli constant. The Frechet derivative of the non-linear operator is evaluated in \cite{KL2} (see also Sect. \ref{SOkt10a} in this paper) and it is represented by the left-hand side in the following spectral problem
\begin{eqnarray}\label{K2az}
&&\Delta w+\omega'(\Psi)w=-\frac{\theta}{\Psi_Y} w\;\;\mbox{in ${\mathcal D}_\xi$},\nonumber\\
&&\partial_\nu w-\rho w=0\;\;\mbox{on ${\mathcal B}_\xi$},\nonumber\\
&&\Psi=0\;\;\mbox{for $Y=0$},
\end{eqnarray}
where $\theta$ is a real number,
$$
\rho(X,Y)=\frac{1+\Psi_{X}\Psi_{XY}+\Psi_{Y}\Psi_{YY}}{\Psi_Y(\Psi_X^2+\Psi_Y^2)^{1/2}}
$$
and $w$ is an even, $\Lambda$-periodic function.

In what follows we will study  branches of Stokes waves $(\Psi(X,Y;t),\xi(X;t))$ of period $\Lambda(t)$, $t\geq 0$, started from the uniform stream  at $t=0$.
It is convenient to make the following change of variables
\begin{equation}\label{D11a}
x=\lambda X,\;\;y=Y,\;\;\lambda=\frac{\Lambda(0)}{\Lambda(t)}
\end{equation}
in order to deal with the problem with a fixed period. As the result we get
\begin{eqnarray}\label{Okt6aa}
&&\Big(\lambda^2\partial_x^2+\partial_y^2\Big)\psi+\omega(\psi)=0\;\;\mbox{in $D_\eta$},\nonumber\\
&&\frac{1}{2}\Big(\lambda^2\psi_x^2+\psi_y^2\Big)+\eta=R\;\;\mbox{on $B_\eta$},\nonumber\\
&&\psi=1\;\;\mbox{on $B_\eta$},\nonumber\\
&&\psi=0\;\;\mbox{for $y=0$},
\end{eqnarray}
where
$$
\psi(x,y;t)=\Psi(\lambda^{-1}x,y;t)\;\;\mbox{and}\;\;\eta(x;t)=\xi(\lambda^{-1} x;t).
$$
Here all functions have the  same period $\Lambda_0:=\Lambda(0)$, $D_\eta$ and $B_\eta$  are the domain and the free surface  after the change of variables (\ref{D11a}).

%We will assume that the function $\lambda$ is analytic with respect to $t\in [0,\infty)$ and the function $\eta$ is also analytic with respect to %$t\geq 0$ with values in $C^{2,\gamma}(\Bbb R)$. The function $\psi$ can be found by solving the first equation in (\ref{Okt6aa}) with the %boundary conditions $\psi=0$ for $y=0$ and $\psi=1$ on $B_\eta$, which implies a certain smoothness of $\psi$ in $t$, which will be explained in %more details latter.

\subsection{Uniform stream solution, dispersion equation}

The uniform stream solution $\psi=U(y)$ with the constant depth $\eta =d$ and $\lambda=1$,  satisfies the problem
\begin{eqnarray}\label{X1}
&&U^{''}+\omega(U)=0\;\;\mbox{on $(0;d)$},\nonumber\\
&&U(0)=0,\;\;U(d)=1,\nonumber\\
&&\frac{1}{2}U'(d)^2+d=R.
\end{eqnarray}
Let $s=U'(0)$ and $s>s_0:=2\max_{\tau\in [0,1]}\wp(\tau)$, where
$$
\wp(\tau)=\int_0^\tau \omega(p)dp.
$$
Then the problem (\ref{X1}) has a solution $(U,d)$ with a strongly monotone function $U$ if
\begin{equation}\label{M6a}
\frac{1}{2}s^2+d(s)-\wp(1)=R.
\end{equation}
In this case $(U,d)$ is found from the relations
\begin{equation}\label{F22a}
y=\int_0^U\frac{d\tau}{\sqrt{s^2-2\wp(\tau)}},\;\;d=d(s)=\int_0^1\frac{d\tau}{\sqrt{s^2-2\wp(\tau)}}.
\end{equation}
%We assume that this problem has a solution $(U,d)$ such that the function $U$ is strongly monotone on the interval $[0,d]$.
%If we introduce $s=U'(0)$ then the first equation in (\ref{X1}) together with two boundary conditions in the second line there are satisfied by %the function $U=U(y)$ and the depth $d$ subject to the relation
%if we assume $s>s_0:=2\max_{\tau\in [0,1]}\Omega(\tau)$. The last relation is satisfied if $s$ is subject to
%$$
%\frac{1}{2}s^2+d(s)-\Omega(1)=R.
%$$
The equation (\ref{M6a}) is solvable if $R>R_c$,
\begin{equation}\label{F27a}
R_c=\min_{s\geq s_0}\Big(\frac{1}{2}s^2+d(s)-\wp(1)\Big),
\end{equation}
and it has two solutions if
$R\in (R_c,R_0)$, where
\begin{equation}\label{D19ba}
R_0=\frac{1}{2}s_0^2+d(s_0)-\wp(1).
\end{equation}
We denote by $s_c$ the point where the minimum in (\ref{F27a}) is attained.

Existence of small amplitude Stokes waves is determined by the dispersion equation (see, for example, \cite{KN14}). It is defined as follows.
 The strong monotonicity of $U$ guarantees that the problem
\begin{equation}\label{Okt6b}
\gamma^{''}+\omega'(U)\gamma-\tau^2\gamma=0,\;\; \gamma(0,\tau)=0,\;\;\gamma(d,\tau)=1
\end{equation}
has a unique solution $\gamma=\gamma(y,\tau)$ for each $\tau\in\Bbb R$, which is even with respect to $\tau$ and depends analytically on $\tau$.
Introduce the function
\begin{equation}\label{Okt6ba}
\sigma(\tau)=\kappa\gamma'(d,\tau)-\kappa^{-1}+\omega(1),\;\;\kappa=\Psi'(d).
\end{equation}
It depends also analytically on $\tau$ and it is strongly increasing with respect to $\tau>0$. Moreover it is an even function.
The dispersion  equation (see, for example \cite{KN14})  is the following
\begin{equation}\label{Okt6bb}
\sigma(\tau)=0.
\end{equation}
It has a positive solution if
\begin{equation}\label{D17a}
\sigma(0)<0.
\end{equation}
By \cite{KN14} this is equivalent to $s+d'(s)<0$ or what is the same
\begin{equation}\label{D19b}
1<\int_0^d\frac{dy}{U'^2(y)}.
\end{equation}
The left-hand side here is equal to $1/F^2$ where $F$ is the Froude number (see \cite{We} and \cite{KLW}). Therefore (\ref{D19b}) means that $F<1$, which is well-known condition for existence of  water waves of small amplitude.
Another equivalent formulation is given by requirement (see, for example \cite{KN11})
\begin{equation}\label{M3aa}
s\in (s_0,s_c)\;\;\mbox{and satisfies (\ref{M6a})}.
\end{equation}
The existence of such $s$ is guaranteed by $R\in (R_c,R_0)$. %So the last requirement for $R$ is equivalent to existence of $(U,d)$ with strongly monotone $U$ and to the validity of (\ref{D19b}).

The function $\sigma$ has the following asymptotic representation
$$
\sigma(\tau)=\kappa\tau +O(1)\;\;\mbox{for large $\tau$}
$$
and equation (\ref{Okt6bb}) has a unique positive root, which will be denoted by $\tau_*$. It is connected with $\Lambda_0$ by the relation
$
\tau_*=\frac{2\pi}{\Lambda_0}.
$
The function $\gamma (y,\tau)$ is positive in $(0,d]$ for $\tau>\tau_*$.

Let
\begin{equation}\label{F28a}
\rho_0=\frac{1+\Psi'(d)\Psi^{''}(d)}{\Psi'(d)^2}.
\end{equation}
We note that
$$
\frac{1+\Psi'(d)\Psi^{''}(d)}{\Psi'(d)^2}=\kappa^{-2}-\frac{\omega(1)}{\kappa}
$$
and hence another form for (\ref{Okt6ba}) is
\begin{equation}\label{M21aa}
\sigma(\tau)=\kappa\gamma'(d,\tau)-\kappa\rho_0.
\end{equation}

\subsection{Partial hodograph transform}

When we consider a branch of solutions to the problem (\ref{Okt6aa}) it is convenient to work with a domain independent of $t$. One set to do this is to use the partial hodograph transform, see \cite{CSst} or \cite{KN14}.

%\subsection{System in $p$, $q$ variables}

We assume that
$$
\psi_y>0\;\;\mbox{in $\overline{D_\eta}$}
$$
and use the variables
$$
q=x,\;\;p=\psi.
$$
Then
$$
q_x=1,\;\;q_y=0,\;\;p_x=\psi_x,\;\;p_y=\psi_y,
$$
and
\begin{equation}\label{F28b}
\psi_x=-\frac{h_q}{h_p},\;\;\psi_y=\frac{1}{h_p},\;\;dxdy=h_pdqdp.
\end{equation}

System (\ref{Okt6aa}) in the new variables takes the form
\begin{eqnarray}\label{J4a}
&&\Big(\frac{1+\lambda^2h_q^2}{2h_p^2}+\wp(p)\Big)_p-\lambda^2\Big(\frac{h_q}{h_p}\Big)_q=0\;\;\mbox{in $Q$},\nonumber\\
&&\frac{1+\lambda^2h_q^2}{2h_p^2}+h=R\;\;\mbox{for $p=1$},\nonumber\\
&&h=0\;\;\mbox{for $p=0$}.
\end{eqnarray}
Here
$$
Q=\{(q,p)\,:\,q\in\Bbb R\,,\;\;p\in (0,1)\}.
$$
The uniform stream solution corresponding to the solution $U$ of (\ref{X1}) is
\begin{equation}\label{M4c}
H(p)=\int_0^p\frac{d\tau}{\sqrt{s^2-2\wp(\tau)}},\;\;s=U'(0)=H_p^{-1}(0).
\end{equation}
One can check that
$$
H_{pp}-H_p^3\omega(p)=0
$$
or equivalently
$$
\Big(\frac{1}{2H_p^2}\Big)_p+\omega(p)=0.
$$
Moreover it satisfies the boundary conditions
\begin{equation}\label{M4ca}
\frac{1}{2H_p^2(1)}+H(1)=R,\;\;H(0)=0.
\end{equation}
The problem (\ref{J4a}) has a variational formulation (see \cite{CSS}) and the potential is given by
\begin{equation}\label{M2a}
f(h;\lambda)=\int_{-\Lambda_0/2}^{\Lambda_0/2}\int_0^1\Big (\frac{1+\lambda^2h_q^2}{2h_p^2}-h+R-(\wp(p)-\wp(1))\Big)h_pdqdp.
\end{equation}

We introduce spaces $C^{k,\alpha}_{pe}(\overline{Q})$ and $C^{k,\alpha}_{pe}(\Bbb R)$, $k=0,1,2,\ldots$, $\alpha\in (0,1)$, which are subspaces of $C^{k,\alpha}(\overline{Q})$ and $C^{k,\alpha}(\Bbb R)$ respectively and consist of even, periodic functions of the period $\Lambda_0$.

The following theorem is proved in \cite{KL1}.

%Having this in mind we denote the operator (\ref{J4aa}) by $A(s)$ and the boundary operator  (\ref{J4ab}) by $N(s)$. Certainly in this case we %must evaluate coefficient there by using (\ref{J4ac}).

\begin{theorem}\label{T11} Assume that $\omega\in  C^{1,\gamma}([0;1])$. Then for any $R\in (R_c, R_0)$
there exist functions ${\mathcal C} : [0;+\infty)\to C^{2,\gamma}_{pe}(\overline{Q})$ and $ \lambda : [0;+\infty) \to (0;+\infty)$ solving the problem (\ref{J4a}) for each $t\geq 0$ with the following properties:

{\rm (i)} ${\mathcal C}(0)$ is the subcritical stream solution $H(p)$ given by (\ref{M4c});

{\rm (ii)} ${\mathcal C}(t) = h(q, p; t); t > 0$ is a solution to (\ref{J4a}) with $\lambda = \lambda (t)$; the corresponding solution
$(\psi; \eta )$ represents a Stokes wave with the period $\Lambda(t)  = \Lambda_0\lambda(t)^{-1}$;

{\rm (iii)} $({\mathcal C}(t),\lambda(t))$ has a real analytic reparametrization locally around each $t\geq 0$.

Furthermore, there exists a sequence $\{t_j\}_{j=1}^\infty$, $t_j\to\infty$ as $j\to\infty$, such that we either have

{\rm (I)} $\max_{x\in\Bbb R}\eta(x;t_j)\rightarrow R$  and $\Lambda(t_j)\rightarrow\Lambda < \infty$, $\Lambda\neq 0$, as $j\rightarrow\infty$ (stagnation at every crest);

{\rm (II)} $\lim_{j\to\infty}\max_{x\in\Bbb R}\eta(x;t_j)=R_1\leq R$ and $\Lambda(t_j)\rightarrow\infty$ as $j\to \infty$ (solitary wave, possibly with stagnation).

{\rm (III)}  $\lim_{j\to\infty}\max_{x\in\Bbb R}\eta'(x;t_j)=\infty$ as $j\to\infty$.

Here $\eta (x;t)$ is the surface profile corresponding to $h(q,p;t)$, i.e. $\eta(x;t)=h(x,1;t)$.

\end{theorem}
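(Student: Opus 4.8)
The plan is to produce the curve of Theorem~\ref{T11} by analytic global bifurcation for the quasilinear problem (\ref{J4a}), using the wave period (equivalently the parameter $\lambda$) as bifurcation parameter, and then to read off the trichotomy (I)--(III) from the abstract continuation alternatives. Concretely, one writes (\ref{J4a}) as $\Phi(h,\lambda)=0$, where $\Phi$ is defined on the open set $\mathcal O=\{h\in C^{2,\gamma}_{pe}(\overline Q):h|_{p=0}=0,\ \inf_{\overline Q}h_p>0\}\times(0,\infty)$ with values in $C^{0,\gamma}_{pe}(\overline Q)\times C^{1,\gamma}_{pe}(\R)$, the two components being the interior equation and the Bernoulli condition on $p=1$. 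On $\mathcal O$ the map $\Phi$ is real-analytic, the interior operator is uniformly elliptic, and the boundary operator is first order and non-tangential, so $\partial_h\Phi(h,\lambda)$ is Fredholm of index zero throughout $\mathcal O$ by the Agmon--Douglis--Nirenberg theory. Since the stream solution $H$ of (\ref{M4c}) is $q$-independent and satisfies (\ref{M4ca}), the pairs $(H,\lambda)$, $\lambda>0$, form a line of trivial solutions of $\Phi=0$, and it is from this line that the branch will bifurcate.

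For the local analysis, linearising $\Phi$ at a trivial solution and separating variables reduces the kernel condition at $(H,\lambda)$ to $\sigma(n\lambda\tau_*)=0$ for some $n\in\mathbb N$, with $\sigma$ the dispersion function (\ref{Okt6ba}); this is precisely the computation underlying the Fr\'echet derivative of \cite{KL2}. Because $\sigma$ is even, strictly increasing on $(0,\infty)$, and has unique positive root $\tau_*=2\pi/\Lambda_0$, the primary bifurcation occurs at $\lambda_0=1$ (with $n=1$), where $\ker\partial_h\Phi(H,1)$ is one-dimensional, spanned by the partial hodograph image of $\gamma(\cdot,\tau_*)\cos(\tau_*X)$ with $\gamma$ as in (\ref{Okt6b}). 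The Crandall--Rabinowitz transversality condition holds because $\sigma'(\tau_*)>0$, so by the analytic Crandall--Rabinowitz theorem a local real-analytic curve of nontrivial solutions of $\Phi=0$ emanates from $(H,1)$.

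By the real-analytic global bifurcation theorem of Dancer and of Buffoni--Toland (used for irrotational Stokes waves in \cite{BDT1}), this local curve extends to a maximal connected curve $\{(\mathcal C(t),\lambda(t)):t\ge0\}\subset\Phi^{-1}(0)$ with $(\mathcal C(0),\lambda(0))=(H,1)$, admitting a real-analytic reparametrisation near each $t$ (this furnishes (i)--(iii), the identification of $\mathcal C(t)$ with a Stokes wave of period $\Lambda(t)=\Lambda_0\lambda(t)^{-1}$ using the nodal properties below), such that either \emph{(a)} the curve is unbounded in $C^{2,\gamma}_{pe}(\overline Q)\times(0,\infty)$, or $\mathcal C(t)$ approaches $\partial\mathcal O$ (i.e.\ $\inf_{\overline Q}h_p(\cdot;t)\to0$), as $t\to\infty$, or \emph{(b)} the curve is a closed loop, necessarily returning to $(H,1)$. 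Alternative (b) is excluded by a nodal argument: the bifurcating waves satisfy $\psi_y=1/h_p>0$ and have $\eta(\cdot;t)$ even and strictly decreasing on $(0,\Lambda(t)/2)$, and the strong maximum principle (as in \cite{CSst}) shows these properties persist along the curve, so $\mathcal C(t)\neq H$ for $t>0$. Hence (a) holds, and there is a sequence $t_j\to\infty$ with $\|\mathcal C(t_j)\|_{C^{2,\gamma}(\overline Q)}+\lambda(t_j)+\lambda(t_j)^{-1}+(\inf_{\overline Q}h_p(\cdot;t_j))^{-1}\to\infty$.

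Finally one shows this non-compactness forces one of (I)--(III). If none holds along $t_j\to\infty$, then, after passing to a subsequence, $\Lambda(t_j)=\Lambda_0\lambda(t_j)^{-1}$ is bounded and bounded away from $0$ (the lower bound negating (II), the upper bound coming from an a priori lower bound on the period of nontrivial waves), $\max_x\eta(\cdot;t_j)\le R-\delta$ for some $\delta>0$ (negating (I) when the period is bounded), and $\max_x|\eta'(\cdot;t_j)|\le C$ (negating (III)). The Bernoulli relation on $p=1$ then yields $(2R)^{-1/2}\le h_p\le(2\delta)^{-1/2}$ there, and with $\max_q|h_q(q,1)|=\max_x|\eta'|\le C$ this gives a $C^1$ bound for $h$ on $p=1$; a maximum-principle argument adapted to the vorticity equation (in the spirit of \cite{CSst}) propagates this to two-sided bounds $c\le h_p\le C$ on $\overline Q$, whence the interior equation is uniformly elliptic with $C^{\gamma}$-bounded coefficients and Schauder estimates give $\|\mathcal C(t_j)\|_{C^{2,\gamma}(\overline Q)}+(\inf h_p)^{-1}\le C$, contradicting the previous paragraph. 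I expect this last step to be the main obstacle: obtaining uniform two-sided bounds on $h_p$ (the fluid speed $\psi_y$) on all of $\overline Q$ from control of the surface quantities alone, since the naive elliptic bootstrap is circular (uniform ellipticity is exactly what must be proved); excluding the degeneration $\Lambda(t_j)\to0$ and making the nodal-persistence argument in Step~3 fully rigorous are the remaining technical points.
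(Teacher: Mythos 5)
The paper does not actually prove Theorem \ref{T11}; it is imported verbatim from \cite{KL1}, and your outline --- analytic global bifurcation for (\ref{J4a}) in the hodograph variables with $\lambda$ (equivalently the period) as parameter, local bifurcation at $\lambda=1$ governed by the dispersion relation $\sigma(\tau_*)=0$, exclusion of the closed-loop alternative by nodal properties in the spirit of \cite{CSst}, and derivation of the trichotomy (I)--(III) from the non-compactness alternative via uniform two-sided bounds on $\psi_y$ --- is essentially the strategy of that reference. The points you flag as the main obstacles (the interior and bottom lower bound on $\psi_y$ obtained from the maximum principle applied to $\Delta\psi_y+\omega'(\psi)\psi_y=0$ together with the Bernoulli control on the surface, the a priori lower bound on the period, and the rigorous persistence of the nodal/monotonicity properties) are precisely the technical content supplied in \cite{KL1} and \cite{KLN17}, so the proposal is correct in approach with those ingredients deferred rather than mistaken.
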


\subsection{Assumptions}\label{S24a}

Here we collect basic assumptions which will be used in this paper for proving our main result on subharmonic bifurcations.

 We assume that $R\in (R_c,R_0)$ and (\ref{D19b}) holds. The last assumption is equivalent to (\ref{D17a}) and to (\ref{M3aa}). Then according to Theorem \ref{T11} there exists a branch of solutions to (\ref{J4a})
\begin{equation}\label{J4ac}
h=h(q,p;t):[0,\infty)\rightarrow C^{2,\gamma}_{pe}(\overline{Q}),\;\;\lambda=\lambda(t):[0,\infty)\rightarrow (0,\infty),
\end{equation}
which  has a real analytic reparametrization locally around each $t\geq 0$.

We assume that
\begin{equation}\label{D21ab}
|h_q(q,1;t)|\leq {\mathcal B} \;\;\mbox{for $q\in\Bbb R$ and $t\geq 0$},
\end{equation}
where ${\mathcal B}$ is a positive constant and that the branch satisfies  Theorem \ref{T11} (I), i.e there exists a sequence $\{t_j\}_{j=1}^\infty$ such that $t_j\to \infty$ as $j\to \infty$ and
\begin{equation}\label{M29a}
h(0,1;t_j)\to R,\;\;\mbox{}\;\;\lambda(t_j)\to \lambda_*\;\;\mbox{as $j\to\infty$},
\end{equation}
where $\lambda_*$ is a positive constant.

We introduce the sequence $(\Psi_j(X,Y),\xi_j(X), \Lambda_j)$, which corresponds to $h_j=h(q,p;t_j)$ and solves the problem (\ref{K2a}). In particular,
\begin{equation}\label{A27a}
\xi_j(X)=h_j(\lambda_jX,0),\;\;\lambda_j=\frac{\Lambda_0}{\Lambda_j}
\end{equation}
and $\Psi_j$ can be found from
\begin{eqnarray}\label{A27aa}
&&\Delta \Psi_j+\omega(\Psi_j)=0\;\;\mbox{in ${\mathcal D}_{\xi_j}$}\nonumber\\
&&\Psi_j=1\;\;\mbox{on ${\mathcal B}_{\xi_j}$}\nonumber\\
&&\Psi_j=0\;\;\mbox{for $Y=0$}.
\end{eqnarray}
The remaining boundary condition for $\Psi_j$ holds because of a similar equation for $h_j$. Both functions $\xi_j$ and $\Psi_j$ have period $\Lambda_j$ with respect to $X$. Furthermore the consequence of (\ref{D21ab}) is the inequality
\begin{equation}\label{Ma8a}
|\xi_{jX}(X)|\leq\lambda_j{\mathcal B}.
\end{equation}

By Proposition 4.2, \cite{KLN17},
\begin{equation}\label{D21a}
|\nabla\Psi_j(X,Y)|\leq C(R,\omega_0),\;\;\mbox{for $(X,Y)\in \overline{{\mathcal D}_{\xi_j}}$},
\end{equation}
where $C$ depends only on $R$ and $\omega_0$, $\omega_0=\max_{0\leq p\leq 1}\omega(p)$. Due to (\ref{D21ab}) one can choose a subsequence of $\{\xi_j\}$ which is convergent in $C^{0,\alpha}_{pe}(\Bbb R)$ for any $\alpha\in (0,1)$, to a function $\xi_*\in C^{0,1}_{pe}(\Bbb R)$. By  (\ref{D21a}) we can assume also that the sequence $\{\widetilde{\Psi}_j\}$, where $\widetilde{\Psi}_j$ is the extension of $\Psi_j$ by $1$ for $Y>\xi(X)$ and by $0$ for $Y<0$,  is also convergent in $L^\infty (Q_{R,a,b})$ to a function $\widetilde{\Psi}_*$, where
$$
Q_{R,a,b}=\{(X,Y)\,:\,X\in (a,b),\;Y\in (0,R)\}.
$$
Moreover
$$
\nabla\widetilde{\Psi}_j\; \mbox{weak}^*\;\nabla\widetilde{Psi_*}\;\mbox{in}\; L^\infty (Q_{R,a,b}).
$$
Here $a<b$ are arbitrary real numbers.
 %the $*-weak sense in $W^{1,p}_{pe}(Q_R)$ for any $p\in (1,\infty)$ \footnote{The space $W^{1,p}_{pe}(Q_R)$ consists of function with finite %norms $||\phi||_{W^{1,p}((a,a+1)\times (0,R))}$ for all $a$}.
%We denote the limit function by $\Psi_*\in W^{1,p}_{pe}(Q_R)$.
Then the limit functions $(\xi_*,\Psi_*,\Lambda_*)$, where $\Lambda_*=\Lambda_0/\lambda_*$ and $\xi_*$ together with $\Psi_*$ are even periodic functions of period $\Lambda_*$, satisfying (\ref{K2a}) in a weak sense, see \cite{Varv}. One can verify that all conditions of Theorem 5.2, Varvaruca \cite{Varv}, are satisfied and
according to that theorem  there are two options for the limit function $\xi_*$:
\begin{equation}\label{F24a}
\lim_{X\to 0+}\frac{\xi_*(X)}{X}=-\frac{1}{\sqrt{3}}\;\;\mbox{or}\;\;\lim_{X\to 0}\frac{\xi_*(X)}{X}=0.
\end{equation}
 We assume that  the first option holds, i.e. the limit Stokes wave has an opening angle  $120$ degree at the stagnation points.

\bigskip
Below we give  sufficient conditions for validity of the above assumptions.

\begin{proposition}\label{Pm22} (Existence of highest waves, Kozlov $\&$ Lokharu \cite{KL3}) Let $\omega\geq 0$ be sufficiently small. Then there exists a constant $R_*\in (R_c, R_0)$  such that only waves of type (I) with the opening angle $120$ degree  can occur for $R\in (R_*,R_0)$.
\end{proposition}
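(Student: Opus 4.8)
The plan is to argue from the trichotomy (I)--(III) of Theorem~\ref{T11} for the branch $t\mapsto(\mathcal C(t),\lambda(t))$ attached to the fixed Bernoulli constant $R\in(R_c,R_0)$: I will show that when $\omega_0=\max_{[0,1]}\omega$ is small and $R$ is close enough to $R_0$, the alternatives (II) and (III) cannot occur, and in alternative (I) only the first option of (\ref{F24a}) is possible, so every subsequential limit of the branch is a highest wave with a $120^\circ$ corner. The pivotal estimate is a bound on the surface slope that is \emph{uniform along the branch}: $|\xi_X(X;t)|\le{\mathcal B}$, equivalently $|h_q(q,1;t)|\le{\mathcal B}$, for all $t\ge0$ with ${\mathcal B}$ independent of $t$. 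Once this is in hand, the remaining requirements collected in \S\ref{S24a} follow by comparatively soft arguments.

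\emph{The uniform slope bound.} On $B_\xi$ the Bernoulli law together with $\xi\ge0$ gives $|\nabla\Psi|^2\le 2R$, and by the maximum-principle argument of \cite{KLN17} (inequality (\ref{D21a})) $|\nabla\Psi|\le C(R,\omega_0)$ throughout $\mathcal D_\xi$, uniformly in $t$. Since $\xi_X=-\Psi_X/\Psi_Y$ on $B_\xi$, the slope can only become large near a crest where $\Psi_Y\to0$, i.e. where stagnation is forming. To control it there I would use that $\Psi$ solves $\Delta\Psi+\omega(\Psi)=0$, a perturbation of size $\omega_0$ of Laplace's equation: the flow angle $\beta=\arctan(\Psi_X/\Psi_Y)$ is harmonic in $\mathcal D_\xi$ when $\omega=0$ and satisfies $\Delta\beta=O(\omega_0)$ in general, and it vanishes on the flat bottom. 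Reducing the free-boundary problem for $\beta$ to a Nekrasov-type integral equation and perturbing the classical irrotational positivity/monotonicity argument (Keady--Norbury, Amick--Toland) by $C\omega_0$ yields $|\beta|\le\frac\pi6+C\omega_0$ on $B_\xi$, hence $|\xi_X|\le\frac1{\sqrt3}+C\omega_0$ near crests, and so the uniform bound ${\mathcal B}$; the Stokes value $1/\sqrt3$ is approached only in the limit. This bound at once \emph{excludes} (III), and it makes every sequence $t_k\to\infty$ admit a subsequence along which $\xi(\cdot;t_k)\to\xi_*$ in $C^{0,\alpha}_{pe}(\R)$ with $\xi_*$ Lipschitz and $\widetilde\Psi(\cdot;t_k)\to\widetilde\Psi_*$, exactly as in \S\ref{S24a}.

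\emph{Excluding (II) and fixing the corner.} A limit of type (II) would be a solitary wave (possibly with stagnation) solving (\ref{K2a}) at the same $R$, hence with the same Froude number $F$, $F^{-2}=\int_0^d\frac{dy}{U'^2(y)}$. As $R\uparrow R_0$ the admissible root of (\ref{M6a}) satisfies $s\downarrow s_0$, the stream solution degenerates ($U'(d)\to0$), so $F^{-2}\to\infty$ and $F\to0$; thus fixing $R$ near $R_0$ forces $F\le1-\delta$ with $\delta>0$ fixed. Since solitary waves on flows of small vorticity are necessarily near-critical ($F\ge1-o(1)$ as $\omega_0\to0$, the small-vorticity counterpart of the classical supercriticality of solitary waves, obtained from the flow-force identity), taking $\omega_0$ small relative to $\delta$ rules out (II). By the exhaustiveness of Theorem~\ref{T11} we are then in case (I): $\Lambda(t_k)\to\Lambda_*<\infty$ and $\max\xi(\cdot;t_k)\to R$, hence $\xi_*(0)=R$ and $|\nabla\Psi_*|=0$ at the crest, so Theorem~5.2 of \cite{Varv} applies and gives the dichotomy (\ref{F24a}) for $\xi_*$: either the $120^\circ$ corner or a horizontally flat (or cusped) crest. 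To discard the degenerate option I would combine the Bernoulli relation $\frac12|\nabla\Psi_*|^2=R-\xi_*$ with the near-harmonicity of $\Psi_*$ and the strict monotonicity of $\xi_*$ on $(0,\Lambda_*/2)$: the Varvaruca--Weiss structure analysis \cite{VW2} ties the flat/cusp alternative to a sign/size condition on the vorticity at the stagnation level that cannot hold once $\omega_0$ is small. Hence the first option of (\ref{F24a}) holds and the wave is a genuine $120^\circ$ corner.

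The step I expect to be the main obstacle is precisely the uniform slope bound, and with it the exclusion of the degenerate crest: both require propagating the $\omega_0$-smallness through a priori estimates that remain stable as the branch approaches the singular highest wave, so that the perturbed Nekrasov-type bound and the perturbed Stokes-corner analysis hold uniformly in $t$ and not merely in the limit. Once these are secured, the Froude-number exclusion of (II) and the invocation of Varvaruca's structure theorem are comparatively routine.
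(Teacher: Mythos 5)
The paper does not actually prove this proposition: it is imported from Kozlov \& Lokharu \cite{KL3}, with the uniform slope bound (\ref{D21ab}) delegated to Strauss \& Wheeler \cite{STrWh} (see the remark immediately after the statement). So there is no internal proof to compare against; what can be assessed is whether your sketch would close the two steps you yourself flag as the main obstacles, and in both cases the proposed mechanism fails. First, the uniform slope bound. The flow angle $\beta=\arctan(\Psi_X/\Psi_Y)$ is \emph{not} a uniformly $O(\omega_0)$ perturbation of a harmonic function: writing $f=\Psi_Y+i\Psi_X$, the deviation of $\Delta\arg f$ from zero involves terms of the schematic form $\omega'(\Psi)\Psi_X\Psi_Y/|\nabla\Psi|^2$ and quadratic combinations of second derivatives divided by $|\nabla\Psi|^2$, and these degenerate precisely at the incipient stagnation point where the slope has to be controlled. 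Likewise, the reduction to a Nekrasov-type integral equation is a conformal-mapping device with no rotational analogue (the partial hodograph map $(q,p)=(x,\psi)$ used here is not conformal), so ``perturbing the Keady--Norbury positivity argument by $C\omega_0$'' is not an available step. The bound the paper actually assumes is obtained in \cite{STrWh} by a maximum principle for a different auxiliary function under the favourable-vorticity hypothesis, which is a genuinely different argument.

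Second, the exclusion of the degenerate options in (\ref{F24a}). Varvaruca \& Weiss \cite{VW2} establish the trichotomy (Stokes corner, horizontally flat, cusp) but explicitly leave open which occurs; there is no ``sign/size condition on the vorticity at the stagnation level'' in their work that rules out the flat/cusp alternatives for small $\omega_0$. This exclusion is exactly the difficulty the introduction credits \cite{KL1,KL3} with resolving, and the resolution there goes through the global structure of the branch (parametrization by the period, positivity of $\Psi_Y$ up to the limit away from the crest, and the asymptotic analysis of \cite{KL4}), not through a local perturbation of the Varvaruca--Weiss analysis; your step here appeals to a lemma that does not exist. A smaller but real issue is alternative (II): the supercriticality of solitary waves \cite{KLW} holds for arbitrary vorticity (the ``small-vorticity counterpart'' framing is off), and, more importantly, the limit object in (II) is a solitary wave \emph{possibly with stagnation}, to which the nonexistence result for subcritical solitary waves does not directly apply; one must also identify which of the two conjugate streams is the upstream state of the limit before any Froude-number comparison can be made. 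In short, your architecture matches the known one (slope bound excludes (III), supercriticality excludes (II), Varvaruca's Theorem 5.2 gives the dichotomy, then the degenerate crest must be excluded), but the two load-bearing steps rest on mechanisms that either are not uniform near stagnation or are not present in the cited literature, so the proposal does not constitute a proof.
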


\begin{remark} Validity of (\ref{D21ab}) is analysed in
Strauss $\&$ Wheeler \cite{STrWh}.
\end{remark}

\section{Bifurcation analysis}

Here we present two equation for finding bifurcation points in $q,p$ variables. One of them is defined by a boundary value problem in a two-dimensional domain and another one including a Dirichlet-Neumann operator is defined one a part of one-dimensional boundary. It was proved in Sect. \ref{SOkt10b} that the Frechet derivatives of operators in these two formulations have the same number of negative eigenvalues and the same dimension of the kernels. So both of them can be used in analysis of the Morse index and corresponding crossing number. The first bifurcation $2$D problem is useful in analysis of the number of negative eigenvalues and the second one is more convenient for application of general bifurcation results.

Both formulations can be easily extended for study of subharmonic bifurcations, see Sect. \ref{SA22a} In Sect. \ref{SOkt10a} we give an explicit connection between the Frechet derivatives in $(x,y)$ and $(q,p)$ variables.

%We evaluate the Frechet derivative of the operators in these equations and then derive their representation in $x,y$ variables.

\subsection{First formulation of bifurcation equation}\label{SA23a}

In order to find bifurcation points and bifuracating solutions we put $h+w$ instead of $h$ in (\ref{J4a}) and introduce the operators
\begin{eqnarray*}
&&{\mathcal F}(w;h,t)=\Big(\frac{1+\lambda^2(h_q+w_q)^2}{2(h_p+w_p)^2}\Big)_p
-\Big(\frac{1+\lambda^2h_q^2}{2h_p^2}\Big)_p\\
&&-\lambda^2\Big(\frac{h_q+w_q}{h_p+w_p}\Big)_q+\lambda^2\Big(\frac{h_q}{h_p}\Big)_q
\end{eqnarray*}
and
$$
{\mathcal G}(w;h,t)=\frac{1+\lambda^2(h_q+w_q)^2}{2(h_p+w_p)^2}-\frac{1+\lambda^2h_q^2}{2h_p^2}+w
$$
acting on $\Lambda_0$-periodic functions $w$ defined in $Q$. After some cancelations we get
$$
{\mathcal F}=\Big(\frac{\lambda^2h_p^2(2h_q+w_q)w_q-(2h_p+w_p)(1+\lambda^2h_q^2)w_p}{2h_p^2(h_p+w_p)^2}\Big)_p
-\lambda^2\Big(\frac{h_pw_q-h_qw_p}{h_p(h_p+w_p)}\Big)_q
$$
and
$$
{\mathcal G}=\frac{\lambda^2h_p^2(2h_q+w_q)w_q-(2h_p+w_p)(1+\lambda^2h_q^2)w_p}{2h_p^2(h_p+w_p)^2}+w.
$$
Both these functions are well defined  for small $w_p$.
Then the problem for finding solutions close to $h$ is the following
\begin{eqnarray}\label{F19a}
&&{\mathcal F}(w;h,t)=0\;\;\mbox{in $Q$}\nonumber\\
&&{\mathcal G}(w;h,t)=0\;\;\mbox{for $p=1$}\nonumber\\
&&w=0\;\;\mbox{for $p=0$}.
\end{eqnarray}
The above problem  also has a variational formulation. The corresponding potential is
$$
f(h+w;\lambda)-f(h;\lambda),
$$
where $f$ is defined by (\ref{M2a}).
Furthermore, the Frechet derivative (the linear approximation of the functions ${\mathcal F}$ and ${\mathcal G}$) is the following
\begin{equation}\label{J4aa}
Aw=A(t)w=\Big(\frac{\lambda^2h_qw_q}{h_p^2}-\frac{(1+\lambda^2h_q^2)w_p}{h_p^3}\Big)_p-\lambda^2\Big(\frac{w_q}{h_p}-\frac{h_qw_p}{h_p^2}\Big)_q
\end{equation}
and
\begin{equation}\label{J4aba}
{\mathcal N}w={\mathcal N}(t)w=(N w-w)|_{p=1},
\end{equation}
where
\begin{equation}\label{J4ab}
N w=N(t)w=\Big(-\frac{\lambda^2h_qw_q}{h_p^2}+\frac{(1+\lambda^2h_q^2)w_p}{h_p^3}\Big)\Big|_{p=1}.
\end{equation}
The eigenvalue problem for the Frechet derivative, which is important for the analysis of bifurcations of the problem
(\ref{F19a}), is the following
\begin{eqnarray}\label{M1a}
&&A(t)w=\theta w\;\;\mbox{in $Q$},\nonumber\\
&&{\mathcal N}(t)w=0\;\;\mbox{for $p=1$},\nonumber\\
&&w=0\;\;\mbox{for $p=0$}.
\end{eqnarray}

\begin{remark}\label{RA9} Differentiating equations in (\ref{J4a}) with respect to $q$ we get
\begin{eqnarray}\label{A9a}
&&A(t)h_q=0\;\;\mbox{in $Q$},\nonumber\\
&&{\mathcal N}(t)h_q=0\;\;\mbox{for $p=1$},\nonumber\\
&&h_q=0\;\;\mbox{for $p=0$}.
\end{eqnarray}
Therefore $h_q$ always solves the problem (\ref{M1a}) for $\theta=0$ but we note that the function $h_q$ is odd with respect to $q$.
\end{remark}

\subsection{Second formulation of bifurcation equation}\label{SA23b}

There is another formulation for the bifurcating solutions, which is used the Dirichlet-Neumann operator. Let us consider the problem
\begin{eqnarray}\label{F19aa}
&&{\mathcal F}(w;h,t)=0\;\;\mbox{in $Q$},\nonumber\\
&&w=g\;\;\mbox{for $p=1$},\nonumber\\
&&w=0\;\;\mbox{for $p=0$}.
\end{eqnarray}
We define the operator ${\mathcal S}={\mathcal S}(g;h,t)$ by
\begin{equation}\label{F19ab}
{\mathcal S}(g;t)={\mathcal G}(w;h,t)|_{p=1},
\end{equation}
where $w$ is the solution of the problem (\ref{F19aa}). Then the equation for bifurcating solutions is
\begin{equation}\label{F19b}
{\mathcal S}(g;t)=0.
\end{equation}
We will prove the solvability of the problem (\ref{F19aa}) in Sect.\ref{F19aa}. Here we note that spectral problem for the the Frechet derivative of the left-hand side in (\ref{F19b}) is given by
\begin{eqnarray}\label{F19ba}
&&A(t)w=0\;\;\mbox{in $Q$},\nonumber\\
&&N(t)w-w=\mu w \;\;\mbox{for $p=1$},\nonumber\\
&&w=0\;\;\mbox{for $p=0$}.
\end{eqnarray}
More exactly if we introduce the problem
\begin{eqnarray}\label{F19bb}
&&A(t)w=0\;\;\mbox{in $Q$},\nonumber\\
&&w=g \;\;\mbox{for $p=1$},\nonumber\\
&&w=0\;\;\mbox{for $p=0$},
\end{eqnarray}
then the operator
\begin{equation}\label{M3a}
Sg=(Nw-w)|_{p=1}
\end{equation}
is the Frechet derivative of the operator  (\ref{F19b}). The corresponding spectral problem is
\begin{equation}\label{M3b}
Sg=\mu g.
\end{equation}

\subsection{Spectral problems (\ref{M1a}) and (\ref{F19ba}) in $(x,y)$ variables}\label{SOkt10a}

In this section we write the spectral problems (\ref{M1a}) and (\ref{F19ba}) in $(x,y)$ variables. This will be useful in our study of negative eigenvalues of the spectral problem (\ref{M3b}).

Let $\Gamma=\Gamma(x,y)$ be a $\Lambda_0$-periodic function in $D_\eta$.
Consider the function
\begin{equation}\label{J17aa}
F(q,p;\lambda)=\Gamma(q,h)h_p.
\end{equation}
Let $A$ be the operator (\ref{J4aa}). Then
\begin{eqnarray}\label{J17ab}
&&Ah_p=\Big(\frac{\lambda^2h_qh_{qp}}{h_p^2}-\frac{(1+\lambda^2h_q^2)h_{pp}}{h_p^3}\Big)_p
-\lambda^2\Big(\frac{h_{qp}}{h_p}-\frac{h_qh_{pp}}{h_p^2}\Big)_q\nonumber\\
&&=\Big(\frac{1+\lambda^2h_q^2}{2h_p^2}\Big)_{pp}-\lambda^2\Big(\frac{h_q}{h_p}\Big)_{qp}=-\omega'(p).
\end{eqnarray}
Next we have
\begin{equation}\label{J17ac}
F_q=\Gamma h_{qp}+\Gamma_x h_p+\Gamma_y h_qh_p,\;\;\;F_p=\Gamma h_{pp}+\Gamma_y h_p^2.
\end{equation}
Using (\ref{J17ac}) together with (\ref{J17ab}), we get
\begin{eqnarray}\label{J17ad}
&&A(\Gamma h_p)=-\omega'(p)\Gamma+\Big(\frac{\lambda^2 h_q(\Gamma_x+\Gamma_y h_q)}{h_p}-\frac{(1+\lambda^2h_q^2)\Gamma_y}{h_p}\Big)_p
-\lambda^2\Big(\Gamma_x\Big)_q\nonumber\\
&&+\Gamma_y\Big(\frac{\lambda^2h_qh_{qp}}{h_p}-\frac{(1+\lambda^2h_q^2)h_{pp}}{h_p^2}\Big)
-\lambda^2(\Gamma_x+\Gamma_yh_q)\Big(\frac{h_{qp}}{h_p}-\frac{h_qh_{pp}}{h_p^2}\Big)\nonumber\\
&&=-\omega'(p)\Gamma-\lambda^2\Gamma_{xx}-\Gamma_{yy}.
\end{eqnarray}
Thus
\begin{equation}\label{J17b}
AF=-\omega'(p)\Gamma-\lambda^2\Gamma_{xx}-\Gamma_{yy}.
\end{equation}

Since
$$
\psi_{xy}=-\frac{h_{qp}+h_{pp}\psi_X}{h_p^2},\;\;\psi_{yy}=-\frac{h_{pp}\psi_y}{h_p^2},
$$
we have
\begin{eqnarray*}
&&\frac{1+\lambda^2\psi_x\psi_{xy}+\psi_y\Psi_{yy}}{\psi_y}=h_p\Big(1-\lambda^2\psi_x\frac{h_{qp}+h_{pp}\psi_x}{h_p^2}-\psi_y^2\frac{h_{pp}}{h_p^2}\Big)\\
&&=h_p+\lambda^2\frac{h_qh_{qp}}{h_p^2}-\frac{(1+\lambda^2h_q^2)h_{pp}}{h_p^3}.
\end{eqnarray*}
Furthermore
\begin{eqnarray*}
&&-NF=\Big(\frac{\lambda^2h_q(\Gamma h_{qp}+\Gamma_xh_p+\Gamma_yh_qh_p)}{h_p^2}-\frac{(1+\lambda^2h_q^2)(\Gamma h_{pp}+\Gamma_yh_p^2)}{h_p^3}\Big)_{p=1}\\
&&=\Big(\lambda^2\frac{h_qh_{qp}}{h_p^2}-\frac{(1+\lambda^2h_q^2)h_{pp}}{h_p^3}\Big)\Gamma+\lambda^2\frac{h_q}{h_p}\Gamma_x-\frac{1}{h_p}\Gamma_y.
\end{eqnarray*}
Therefore
\begin{equation}\label{J18a}
-NF+F=\sqrt{\psi_x^2+\psi_y^2}\rho \Gamma-\lambda^2\psi_x\Gamma_x-\psi_y\Gamma_y,
\end{equation}
where
\begin{equation}\label{Sept17aa}
\rho=
\rho(x;t)=\frac{(1+\lambda^2\psi_x\psi_{xy}+\psi_y\psi_{yy})}{\psi_y(\psi_x^2+\psi_y^2)^{1/2}}\Big|_{y=\eta(x;t)}.
\end{equation}

\begin{corollary}\label{CorM1} If $\Gamma=\Gamma(x,y)$ satisfies
 the problem
\begin{eqnarray}\label{J17a}
&&(\lambda^2\partial_x^2+\partial_y^2)\Gamma+\omega'(\psi)\Gamma +\theta\frac{1}{\psi_y}\Gamma=0\;\;\mbox{in $D_\eta$},\nonumber\\
&&(\lambda^2\nu_x\Gamma_x+\nu_y\Gamma_y-\rho \Gamma)_{p=1}=0\;\;\mbox{on $B_\eta$},\nonumber\\
&&\Gamma=0\;\;\mbox{for $x=0$},
\end{eqnarray}
where $\nu=(\nu_x,\nu_y)=\nabla\psi/|\nabla\psi|$ is the unite outward normal to $y=\eta(x)$,
then
\begin{eqnarray}\label{J18ba}
&&AF=\theta F,\nonumber\\
&&NF-F=0\;\;\mbox{for $p=1$},\nonumber\\
&&F=0\;\;\mbox{for $p=0$}.
\end{eqnarray}
\end{corollary}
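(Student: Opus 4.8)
The plan is to obtain the three relations in (\ref{J18ba}) by substituting the hypotheses (\ref{J17a}) into the two identities (\ref{J17b}) and (\ref{J18a}) already derived above, using the elementary facts of the partial hodograph transform: in the $(q,p)$ variables one has $p=\psi$ and $h_p=1/\psi_y$ (so $F=\Gamma(q,h)h_p$ equals $\Gamma/\psi_y$ at the corresponding point), while $h(q,0)=0$ by (\ref{J4a}).

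First I would treat the interior equation. By (\ref{J17b}), $AF=-\omega'(p)\Gamma-\lambda^2\Gamma_{xx}-\Gamma_{yy}$. The first line of (\ref{J17a}) gives $-\lambda^2\Gamma_{xx}-\Gamma_{yy}=\omega'(\psi)\Gamma+\theta\psi_y^{-1}\Gamma$; since $\psi=p$ the contributions $-\omega'(p)\Gamma$ and $\omega'(\psi)\Gamma$ cancel, and therefore $AF=\theta\psi_y^{-1}\Gamma=\theta h_p\Gamma=\theta F$, which is the first line of (\ref{J18ba}). Next, for the condition at $p=1$, I would start from (\ref{J18a}), namely $-NF+F=\sqrt{\psi_x^2+\psi_y^2}\,\rho\Gamma-\lambda^2\psi_x\Gamma_x-\psi_y\Gamma_y$. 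Writing the outward unit normal on $B_\eta$ as $\nu=(\nu_x,\nu_y)=(\psi_x,\psi_y)/\sqrt{\psi_x^2+\psi_y^2}$ and factoring out $\sqrt{\psi_x^2+\psi_y^2}$, the right-hand side equals $-\sqrt{\psi_x^2+\psi_y^2}\,\bigl(\lambda^2\nu_x\Gamma_x+\nu_y\Gamma_y-\rho\Gamma\bigr)$, which vanishes by the boundary condition in (\ref{J17a}); hence $NF-F=0$ on $p=1$. Finally, at $p=0$ we have $F(q,0)=\Gamma(q,h(q,0))h_p(q,0)=\Gamma(q,0)h_p(q,0)=0$ by the Dirichlet condition in (\ref{J17a}) (the condition on the bottom $y=0$) together with $h(q,0)=0$.

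I do not expect any genuine obstacle here: the only nontrivial computation — the verification of (\ref{J17ad}), hence of (\ref{J17b}), together with (\ref{J18a}) — has already been carried out in the lines preceding the statement, so the corollary reduces to a substitution. The single point that needs a little care is the bookkeeping of the scalar factor $\sqrt{\psi_x^2+\psi_y^2}$ when one passes between the operator $N$ in (\ref{J4ab}) and the conormal derivative appearing in (\ref{J17a}), and the corresponding identification of $\nu$ with $\nabla\psi/|\nabla\psi|$; this is purely algebraic and uses only that $\psi=1$, hence $\nabla\psi\parallel\nu$, on $B_\eta$.
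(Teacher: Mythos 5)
Your proposal is correct and follows exactly the route the paper intends: the corollary is stated without separate proof precisely because it is the immediate substitution of the hypotheses \eqref{J17a} into the identities \eqref{J17b} and \eqref{J18a}, with the normalization $\nu=\nabla\psi/|\nabla\psi|$ and the bottom condition $h(q,0)=0$ handled as you describe. Your reading of the misprinted condition ``$\Gamma=0$ for $x=0$'' as the Dirichlet condition on the bottom $y=0$ is also the right one.
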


\begin{corollary}\label{CorM2} If $\Gamma=\Gamma(x,y)$ satisfies the problem
\begin{eqnarray}\label{J17aaa}
&&(\lambda^2\partial_x^2+\partial_y^2)\Gamma+\omega'(\psi)\Gamma=0\;\;\mbox{in $D_\eta$},\nonumber\\
&&(\lambda^2\nu_x\Gamma_x+\nu_y\Gamma_y-\rho \Gamma)_{p=1}=\mu\frac{\psi_y}{|\nabla\psi|}\Gamma\;\;\mbox{on $B_\eta$},\nonumber\\
&&\Gamma=0\;\;\mbox{fo $x=0$},
\end{eqnarray}
then
\begin{eqnarray}\label{J18baa}
&&AF=0,\nonumber\\
&&NF-F=\mu F,\;\;\mbox{for $p=1$}\nonumber\\
&&F=0\;\;\mbox{for $p=0$}.
\end{eqnarray}
\end{corollary}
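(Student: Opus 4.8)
The plan is to read the corollary off the two identities (\ref{J17b}) and (\ref{J18a}) established just above, applied to the function $F=\Gamma h_p$ of (\ref{J17aa}); it is the companion of Corollary \ref{CorM1} with $\theta=0$, but with a nonzero Steklov-type datum on $B_\eta$ in place of a homogeneous conormal condition.

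For the interior equation, note that $p=\psi$, so $\omega'(\psi)=\omega'(p)$, and the first line of (\ref{J17aaa}) is precisely $\lambda^2\Gamma_{xx}+\Gamma_{yy}+\omega'(p)\Gamma=0$ in $D_\eta$. Substituting this into the right-hand side of (\ref{J17b}) gives $AF=-\omega'(p)\Gamma-\lambda^2\Gamma_{xx}-\Gamma_{yy}=0$ in $Q$. For the bottom condition, the level $p=0$ corresponds to the bottom $y=0$ (where $h=0$), so the vanishing of $\Gamma$ there gives $F=\Gamma h_p=0$ for $p=0$.

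The boundary relation at $p=1$ is where a short computation is needed. I would first rewrite the right-hand side of (\ref{J18a}) in terms of the conormal derivative that appears in (\ref{J17aaa}): since $\nu=\nabla\psi/|\nabla\psi|$ is the outward unit normal on $B_\eta$, one has $\lambda^2\psi_x\Gamma_x+\psi_y\Gamma_y=|\nabla\psi|(\lambda^2\nu_x\Gamma_x+\nu_y\Gamma_y)$ and, by (\ref{Sept17aa}), $\sqrt{\psi_x^2+\psi_y^2}\,\rho=|\nabla\psi|\rho$, so that (\ref{J18a}) reads $-NF+F=-|\nabla\psi|\,(\lambda^2\nu_x\Gamma_x+\nu_y\Gamma_y-\rho\Gamma)|_{p=1}$. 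Inserting the boundary condition of (\ref{J17aaa}) and simplifying with $h_p=1/\psi_y$ (from (\ref{F28b})) and $F=\Gamma h_p$ turns this into $NF-F=\mu F$ at $p=1$, which is the middle line of (\ref{J18baa}). The only genuinely delicate point is the bookkeeping of the weight $\psi_y/|\nabla\psi|$ in the Steklov datum, which is exactly the normalization dictated by (\ref{J18a}); the rest is routine substitution, so I do not anticipate a real obstacle.
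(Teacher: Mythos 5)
Your overall strategy is exactly the one the paper intends: Corollary \ref{CorM2} is meant to be read off from the identities (\ref{J17b}) and (\ref{J18a}) applied to $F=\Gamma h_p$, and your treatment of the interior equation (since $p=\psi$, the first line of (\ref{J17aaa}) makes the right-hand side of (\ref{J17b}) vanish, so $AF=0$) and of the bottom condition is correct.

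However, the step you yourself single out as ``the only genuinely delicate point'' is precisely where the argument does not close as you assert. Carry the substitution out: with $\nu=\nabla\psi/|\nabla\psi|$, the boundary condition of (\ref{J17aaa}) multiplied by $-|\nabla\psi|$ reads
\begin{equation*}
\sqrt{\psi_x^2+\psi_y^2}\,\rho\,\Gamma-\lambda^2\psi_x\Gamma_x-\psi_y\Gamma_y=-\mu\,\psi_y\,\Gamma ,
\end{equation*}
whose left-hand side is exactly the right-hand side of (\ref{J18a}); hence $NF-F=\mu\,\psi_y\Gamma$ at $p=1$. But $F=\Gamma h_p=\Gamma/\psi_y$, i.e.\ $\Gamma=\psi_y F$, so this gives $NF-F=\mu\,\psi_y^{2}F$, not $NF-F=\mu F$. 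The weight $\psi_y/|\nabla\psi|$ is therefore \emph{not} ``the normalization dictated by (\ref{J18a})'': to obtain $NF-F=\mu F$ one needs the Steklov weight $1/(\psi_y|\nabla\psi|)=h_p/|\nabla\psi|$, which differs from the printed one by the factor $\psi_y^{2}$. You can cross-check this against the paper's own $t=0$ computation: there the $(q,p)$ eigenvalue is $\mu=\kappa^2\gamma'(d)-1+\kappa\omega(1)=\kappa^{2}\bigl(\gamma'(d)-\rho_0\bigr)$, i.e.\ $\psi_y^{2}$ times the eigenvalue that (\ref{J17aaa}) with the printed weight would produce ($\gamma'(d)-\rho_0$, since at $t=0$ one has $\psi_x=0$ and $\psi_y=|\nabla\psi|=\kappa$ on the surface). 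So either the weight in the statement must be corrected to $h_p/|\nabla\psi|$, or the conclusion must read $NF-F=\mu\psi_y^{2}F$; your proposal asserts the simplification without performing this bookkeeping, and as written the assertion fails for the weight as printed.
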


\subsection{Solvability of the Dirichlet problem (\ref{F19aa})}

The next proposition deals with the following Dirichlet problem
\begin{eqnarray}\label{F20b}
&&Aw=f,\nonumber\\
&&w=g,\;\;\mbox{for $p=1$}\nonumber\\
&&w=0\;\;\mbox{for $p=0$}.
\end{eqnarray}

\begin{proposition}\label{PrM1} Let $g\in C^{2,\alpha}_{pe}(\Bbb R)$ and $f\in C^{0,\alpha}_{pe}(Q)$, $\alpha\in (0,\gamma]$. Then the problem (\ref{F20b}) has solution
$w\in C^{2,\alpha}_{pe}(Q)$, which satisfies the estimate
$$
||w||_{C^{2,\alpha}_{pe}(Q)}\leq C(||f||_{C^{0,\alpha}_{pe}(Q)}+||g||_{C^{2.,\alpha}_{pe}(\Bbb R)}).
$$
\end{proposition}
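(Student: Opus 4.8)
The plan is to treat the Dirichlet problem (\ref{F20b}) as a standard uniformly elliptic boundary value problem on the cylinder $Q$, to produce a solution by a coercive weak formulation, and then to promote it to a $C^{2,\alpha}$ solution by elliptic regularity.

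First I would make the divergence structure of $A$ explicit. From (\ref{J4aa}), $Aw=\partial_q F^q+\partial_p F^p$ with flux $(F^q,F^p)^{\mathsf T}=\mathcal{A}\,\nabla w$, $\nabla=(\partial_q,\partial_p)$, where
\[
\mathcal{A}=-\frac{1}{h_p^{3}}\,M,\qquad
M=\begin{pmatrix}\lambda^{2}h_p^{2} & -\lambda^{2}h_qh_p\\ -\lambda^{2}h_qh_p & 1+\lambda^{2}h_q^{2}\end{pmatrix}.
\]
Since $\det M=\lambda^{2}h_p^{2}$ and $M_{11}=\lambda^{2}h_p^{2}>0$, the symmetric matrix $M$ is positive definite; because $h\in C^{2,\gamma}_{pe}(\overline Q)$ and $h_p$ is bounded away from $0$ on $\overline Q$ (this is precisely the condition $\psi_y>0$ underlying the partial hodograph transform), $M$ is \emph{uniformly} positive definite on $\overline Q$ and the entries of $\mathcal{A}$ belong to $C^{1,\gamma}_{pe}(\overline Q)$. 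Hence $-A$ is uniformly elliptic in divergence form with H\"older coefficients, and the symmetric bilinear form $a(w,v)=\int_Q h_p^{-3}(\nabla w)^{\mathsf T}M\nabla v\,dq\,dp$ satisfies $a(w,v)=\langle Aw,v\rangle_{L^2(Q)}$ whenever $v$ vanishes at $p=0$ and $p=1$.

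Next I would homogenize the boundary data and invoke Lax--Milgram. Putting $G(q,p)=p\,g(q)$ one has $G\in C^{2,\alpha}_{pe}(\overline Q)$, $\|G\|_{C^{2,\alpha}}\le C\|g\|_{C^{2,\alpha}}$, $G|_{p=0}=0$, $G|_{p=1}=g$, and with $w=G+v$ the problem becomes $Av=f-AG=:\widetilde{f}$ in $Q$, $v|_{p=0}=v|_{p=1}=0$, where $\widetilde{f}\in C^{0,\alpha}_{pe}(Q)$ and $\|\widetilde{f}\|_{C^{0,\alpha}}\le C(\|f\|_{C^{0,\alpha}}+\|g\|_{C^{2,\alpha}})$. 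Let $V$ be the closed subspace of $H^1(Q)$ (with $q$ of period $\Lambda_0$) of even functions vanishing at $p=0$ and $p=1$. Uniform positive definiteness of $M$ gives $a(v,v)\ge c\|\nabla v\|_{L^2(Q)}^2$, and the one-dimensional Poincar\'e inequality in $p$ (using $v|_{p=0}=0$) upgrades this to $a(v,v)\ge c'\|v\|_{H^1(Q)}^2$, so $a$ is coercive and bounded on $V$. By Lax--Milgram there is a unique $v\in V$ with $a(v,\phi)=\int_Q\widetilde{f}\,\phi$ for all $\phi\in V$, and $\|v\|_{H^1(Q)}\le C\|\widetilde{f}\|_{L^2(Q)}$. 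Since the reflection $q\mapsto-q$ makes $a(\text{even},\text{odd})=0$ and $\int_Q\widetilde{f}\cdot(\text{odd})=0$, this identity in fact holds for every $\phi\in H^1(Q)$ vanishing at $p=0,1$, so $v$ is a genuine weak solution of the homogeneous Dirichlet problem $Av=\widetilde{f}$.

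Finally I would run an elliptic regularity bootstrap. The boundary portions $p=0,1$ are flat, the coefficients of $-A$ lie in $C^{1,\gamma}_{pe}(\overline Q)$, and after the periodic identification in $q$ the set $Q$ is the compact cylinder $[-\Lambda_0/2,\Lambda_0/2]\times[0,1]$; passing from $v\in H^1(Q)\hookrightarrow L^r(Q)$ (every $r<\infty$) through interior and boundary $W^{2,r}$ estimates (with $\widetilde{f}\in L^\infty$) to $v\in C^{1,\beta}$, and then through the Schauder estimate for the Dirichlet problem, gives $v\in C^{2,\alpha}_{pe}(Q)$ with $\|v\|_{C^{2,\alpha}_{pe}(Q)}\le C(\|\widetilde{f}\|_{C^{0,\alpha}_{pe}(Q)}+\|v\|_{L^2(Q)})\le C\|\widetilde{f}\|_{C^{0,\alpha}_{pe}(Q)}$; then $w=G+v$ and the combined bounds give the assertion. (Uniqueness of $v$ also lets one absorb the lower-order term by a compactness argument; alternatively Lax--Milgram may be replaced by the method of continuity joining $-A$ to $-\Delta$, all intermediate operators being uniformly elliptic since the coefficient matrices stay positive definite.) I expect the only point needing real care to be the uniform ellipticity of $-A$ up to $p=0,1$ — that $h_p$ is bounded below there and that the coefficients are H\"older continuous — which is exactly where the $C^{2,\gamma}$ regularity of the branch and the standing hypothesis $\psi_y>0$ in $\overline{D_\eta}$ enter; the remaining steps are routine Schauder/Sobolev theory.
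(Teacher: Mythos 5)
Your argument is correct, and it reaches the conclusion by a genuinely different route than the paper. The paper's proof passes to the $(x,y)$ variables, where by (\ref{J17b}) the equation $Aw=f$ becomes $\lambda^2\Gamma_{xx}+\Gamma_{yy}+\omega'(\psi)\Gamma=f$ with Dirichlet data; because of the zero-order term $\omega'(\psi)$ of indefinite sign, positivity is not manifest there, and the paper instead kills the kernel with the generalized maximum principle, using that $\psi_y$ is a positive solution of the homogeneous equation, and then gets the cokernel from symmetry and solvability from Fredholm theory plus Schauder estimates. You stay in the $(q,p)$ variables, where $A$ has no zero-order term, verify by Sylvester's criterion ($\det M=\lambda^2h_p^2$, $M_{11}>0$) that the divergence-form operator is uniformly elliptic and its Dirichlet form is pointwise positive definite, and then run Lax--Milgram plus a standard bootstrap. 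The two uses of the standing hypothesis $\psi_y>0$ are in fact two faces of the same fact: the substitution $w=\Gamma h_p=\Gamma/\psi_y$ implicit in (\ref{J17aa}) is exactly the ground-state transform with ground state $\psi_y$, which converts the paper's maximum-principle positivity into your manifest coercivity (this is also the mechanism behind the paper's inequality (\ref{A8a})). What your route buys is self-containedness — existence \emph{and} uniqueness with an explicit energy estimate, no appeal to Protter--Weinberger — at the cost of having to be careful about two points you did handle: that $AG\in C^{0,\alpha}$ only because $\alpha\le\gamma$ and the coefficients are $C^{1,\gamma}$, and that the weak identity proved on the even subspace extends to all test functions via the parity argument (the coefficients $h_p$, $h_q^2$ are even and $h_q$ is odd in $q$), so that $v$ is a genuine distributional solution to which interior and boundary regularity applies. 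Both proofs are sound; yours is, if anything, the more elementary one.
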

\begin{proof} %Differentiating the first equation in (\ref{F20b} with respect to $y$ we get
We start from the problem
\begin{eqnarray*}
&&\lambda^2\Gamma_{xx}+\Gamma_{yy}+\omega'(\psi)\Gamma=f\;\;\mbox{in $D_\eta$},\\
&&\Gamma=g\;\;\mbox{for $y=\eta(x)$},\\
&&\Gamma(x,0)=0.
\end{eqnarray*}
This problem is elliptic. Let us show that it has a trivial kernel. Differentiating the first equation in (\ref{Okt6aa})
with respect to $y$ we get
$$
\lambda^2\psi_{xxx}+\psi_{xyy}+\omega'(\psi)\psi_y=0\;\;\mbox{in $D_\eta$}.
$$
Since $\psi_y>0$ in $\overline{D_\eta}$ by maximum principle the kernel of the problem is trivial (see \cite{PrWe}). Moreover the corresponding operator is symmetric and therefore the cokernel is also trivial. These facts and required smoothness of coefficients and standard solvability results for elliptic biundary value problems prove the proposition.
\end{proof}

Next consider the nonlinear problem
\begin{eqnarray}\label{M7a}
&&{\mathcal F}(w;h,t)=f\;\;\mbox{in $Q$},\nonumber\\
&&w=g\;\;\mbox{for $p=1$},\nonumber\\
&&w=0\;\;\mbox{for $p=0$}.
\end{eqnarray}

\begin{proposition}\label{PrA8} There exist positive number $\delta_0$ depending on $M$, $R-\max h(p,1;t)$ such that if
$$
||f||_{C^{0,\gamma}_{pe}(Q)}+||g||_{C^{2,\gamma}_{pe}(\Bbb R)}\leq \delta\leq\delta_0,
$$
then there exists a unique solution $w\in C^{2,\gamma}_{pe}(Q)$ such that
$$
||w||_{C^{2,\gamma}_{pe}(Q)}\leq C\delta .
$$

\end{proposition}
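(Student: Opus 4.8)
The plan is to recast \eqref{M7a} as a fixed-point equation and solve it by the contraction mapping principle in a small ball of $C^{2,\gamma}_{pe}(Q)$, using the linear Dirichlet problem of Proposition~\ref{PrM1} as the inverse operator. Write the interior operator as ${\mathcal F}(w;h,t)=A(t)w+{\mathcal R}(w;h,t)$, where $A(t)$ is the Frechet derivative \eqref{J4aa}. With $\phi(a,b)=\frac{1+\lambda^2a^2}{2b^2}$ and $\theta(a,b)=\frac{a}{b}$ one has
$$
{\mathcal F}(w;h,t)=\big(\phi(h_q+w_q,h_p+w_p)-\phi(h_q,h_p)\big)_p-\lambda^2\big(\theta(h_q+w_q,h_p+w_p)-\theta(h_q,h_p)\big)_q,
$$
so Taylor expansion in $(w_q,w_p)$ splits off $A(t)w$ as the linear part and leaves ${\mathcal R}(w;h,t)=(Q_\phi)_p-\lambda^2(Q_\theta)_q$, where $Q_\phi=Q_\phi(h_q,h_p,w_q,w_p)$ and $Q_\theta$ are the quadratic Taylor remainders: smooth functions of their arguments, vanishing to second order in $(w_q,w_p)$, as long as $h_p$ and $h_p+w_p$ stay away from $0$.

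First I would record the lower bound $c_0:=\inf_{\overline Q}h_p(\cdot;t)>0$, which is controlled from below by the data $M$ and $R-\max h(q,1;t)$ of the statement (on $p=1$ the Bernoulli condition gives $h_p=\big((1+\lambda^2h_q^2)/(2(R-h))\big)^{1/2}$, and the equation in \eqref{J4a} transports a positive lower bound into $Q$). For $\|w\|_{C^{2,\gamma}_{pe}(Q)}\le c_0/2$ the denominators $h_p+w_p$ stay $\ge c_0/2$, so ${\mathcal F}$ and ${\mathcal R}$ are well defined and real-analytic in $w$; differentiating ${\mathcal R}$ once (which brings in $h_{qp},h_{pp}\in C^{0,\gamma}_{pe}(Q)$ since $h\in C^{2,\gamma}_{pe}(\overline Q)$, and $w_{qp},w_{pp}\in C^{0,\gamma}_{pe}(Q)$ since $w\in C^{2,\gamma}_{pe}(Q)$), using that $Q_\phi,Q_\theta$ vanish to second order in $(w_q,w_p)$, and using that $C^{0,\gamma}_{pe}(Q)$ is a Banach algebra, I obtain
$$
\|{\mathcal R}(w;h,t)\|_{C^{0,\gamma}_{pe}(Q)}\le C\|w\|_{C^{2,\gamma}_{pe}(Q)}^2,
$$
$$
\|{\mathcal R}(w_1;h,t)-{\mathcal R}(w_2;h,t)\|_{C^{0,\gamma}_{pe}(Q)}\le C\big(\|w_1\|_{C^{2,\gamma}_{pe}(Q)}+\|w_2\|_{C^{2,\gamma}_{pe}(Q)}\big)\|w_1-w_2\|_{C^{2,\gamma}_{pe}(Q)},
$$
with $C=C(M,R-\max h(q,1;t))$.

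Next define the map $T$ by letting $T(w)$ be the unique solution $v\in C^{2,\gamma}_{pe}(Q)$ of $A(t)v=f-{\mathcal R}(w;h,t)$ in $Q$, $v=g$ on $p=1$, $v=0$ on $p=0$, which exists by Proposition~\ref{PrM1} and obeys $\|T(w)\|_{C^{2,\gamma}_{pe}(Q)}\le C_1(\|f\|_{C^{0,\gamma}_{pe}(Q)}+\|{\mathcal R}(w;h,t)\|_{C^{0,\gamma}_{pe}(Q)}+\|g\|_{C^{2,\gamma}_{pe}(\Bbb R)})\le C_1\delta+C_1C\|w\|_{C^{2,\gamma}_{pe}(Q)}^2$ whenever $\|f\|_{C^{0,\gamma}_{pe}(Q)}+\|g\|_{C^{2,\gamma}_{pe}(\Bbb R)}\le\delta$. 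Set $\varrho=2C_1\delta$ and $B_\varrho=\{\|w\|_{C^{2,\gamma}_{pe}(Q)}\le\varrho\}$. Choosing $\delta_0=\delta_0(M,R-\max h(q,1;t))$ so small that $2C_1\delta_0\le c_0/2$ and $4C_1^2C\delta_0\le1$, for $\delta\le\delta_0$ one gets $\|T(w)\|_{C^{2,\gamma}_{pe}(Q)}\le C_1\delta+C_1C\varrho^2\le\varrho$ for $w\in B_\varrho$, and $\|T(w_1)-T(w_2)\|_{C^{2,\gamma}_{pe}(Q)}\le C_1C(\|w_1\|+\|w_2\|)\|w_1-w_2\|\le 2C_1C\varrho\|w_1-w_2\|_{C^{2,\gamma}_{pe}(Q)}\le\tfrac12\|w_1-w_2\|_{C^{2,\gamma}_{pe}(Q)}$ for $w_1,w_2\in B_\varrho$. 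Hence $T$ is a contraction of $B_\varrho$ into itself, and its unique fixed point $w$ solves \eqref{M7a} with $\|w\|_{C^{2,\gamma}_{pe}(Q)}\le\varrho=2C_1\delta$, i.e.\ the assertion with $C=2C_1$; moreover it is the only solution of \eqref{M7a} in $B_\varrho$.

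I expect the only genuine work to be in the nonlinear $C^{0,\gamma}$-estimates for ${\mathcal R}$: one must make the dependence of $C$ (and therefore of $\delta_0$) on $M$ and on the lower bound $c_0$ of $h_p$ — equivalently on $R-\max h(q,1;t)$ — completely explicit, since Proposition~\ref{PrA8} is meant to be applied precisely in the regime where the branch approaches the extreme wave and these quantities deteriorate. Everything else is the standard Newton/contraction scheme and causes no difficulty.
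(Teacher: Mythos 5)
Your proposal is correct and is exactly the argument the paper has in mind: the paper's own "proof" consists of the single remark that (\ref{M7a}) is a small perturbation of the linear problem (\ref{F20b}) and that the argument is standard, and your contraction-mapping scheme built on Proposition~\ref{PrM1} with quadratic remainder estimates is the standard filling-in of that remark. No discrepancy to report.
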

\begin{proof} Since the problem (\ref{M7a}) is a small perturbation of (\ref{F20b}) and the proof is quite standard.
\end{proof}

\subsection{Comparison of two spectral problems}\label{SOkt10b}

Here we will compare the nagative spectrum of the following spectral problems
\begin{eqnarray}\label{F20a}
&&A(t)u=0\;\;\mbox{in $Q$}\nonumber\\
&&N(t) u-u=\mu u\;\;\mbox{for $p=1$}\nonumber\\
&&u=0\;\;\mbox{for $p=0$}.
\end{eqnarray}
and
\begin{eqnarray}\label{F20aa}
&&A(t)u=\theta u\;\;\mbox{in $Q$}\nonumber\\
&&N(t) u-u=0\;\;\mbox{for $p=1$}\nonumber\\
&&u=0\;\;\mbox{for $p=0$}.
\end{eqnarray}

\begin{proposition}\label{Pm23} The spectral problems {\rm (\ref{F20a})} and {\rm (\ref{F20aa})} have the same number of negative eigenvalues (accounting their multiplicities).
\end{proposition}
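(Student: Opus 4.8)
The plan is to exhibit an explicit correspondence between eigenfunctions of the two problems that preserves the sign of the associated quadratic form, so that the count of negative eigenvalues coincides. Both operators are Frechet derivatives of the same potential $f(h+w;\lambda)-f(h;\lambda)$ restricted to different constraint sets, so the natural object to work with is the common bilinear form
\begin{equation}\label{Pm23form}
a(u,v)=\int_Q\Big(\frac{1+\lambda^2h_q^2}{h_p^3}u_pv_p-\frac{\lambda^2h_q}{h_p^2}(u_pv_q+u_qv_p)+\frac{\lambda^2}{h_p}u_qv_q\Big)\,dqdp-\int_{p=1}uv\,dq,
\end{equation}
obtained by integrating $\langle Au,v\rangle$ by parts and using the Neumann-type boundary term that defines $N$. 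First I would record that, after integration by parts, $\langle A(t)u-\theta u,v\rangle_Q+\langle N(t)u-u,v\rangle_{p=1}=a(u,v)-\theta\langle u,v\rangle_Q$ for $u,v$ vanishing at $p=0$; this is the bookkeeping that makes both spectral problems variational in the same form.

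Next I would treat the two problems via their Rayleigh quotients. For (\ref{F20aa}), the negative eigenvalues $\theta$ are governed by $a(u,u)$ over the full space $V=\{u\in H^1(Q):u=0\text{ on }p=0\}$ relative to the weighted $L^2$ inner product $\langle u,u\rangle_Q$ (note $h_p>0$, so this inner product is positive), and by the min–max principle the number of negative $\theta$ equals the maximal dimension of a subspace of $V$ on which $a$ is negative definite. For (\ref{F20a}), the negative eigenvalues $\mu$ are the negative part of the spectrum of the operator $S$ obtained by first solving $A(t)w=0$ with Dirichlet data $g$ on $p=1$, then reading off $Nw-w$; here the relevant quadratic form is $a(w_g,w_g)$ where $w_g\in V$ is the $A$-harmonic extension of $g$, and the number of negative $\mu$ is the maximal dimension of a space of boundary data $g$ on which $g\mapsto a(w_g,w_g)$ is negative definite. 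The key algebraic point is that $w_g$ is exactly the $a$-orthogonal projection onto the subspace $V_0=\{w\in V:a(w,v)=0\ \forall v\in V,\ v=0\text{ on }p=1\}$ of functions that are $A$-harmonic in $Q$, and that $a$ restricted to $V_0^{\perp_a}$ (functions vanishing on $p=1$, i.e. on the interior) is positive definite — this positivity is precisely the triviality of the kernel and cokernel established in the proof of Proposition \ref{PrM1} together with $\psi_y>0$, which forces the bulk part of $a$ to be coercive on functions with zero boundary trace.

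Granting that interior positivity, the conclusion follows from a standard fact about quadratic forms: if $V=V_0\oplus W$ with $a$ positive definite on $W$ and $V_0\perp_a W$, then the index (maximal dimension of a negative-definite subspace) of $a$ on all of $V$ equals the index of $a$ restricted to $V_0$; and $a|_{V_0}$ expressed through the trace parametrization $g\mapsto w_g$ is exactly the form controlling (\ref{F20a}), while $a|_V$ with the compact perturbation coming from the weighted $L^2$ term controls (\ref{F20aa}). One must also check the negative eigenvalues are finite in number and the forms are bounded below on bounded sets — this is where compactness of the embedding $H^1(Q)\hookrightarrow L^2(Q)$ and of the trace map enter, making $S$ and $A(t)-\theta$ have discrete negative spectrum. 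I expect the main obstacle to be the rigorous justification that the bulk bilinear form is coercive on $\{w\in V:w=0\text{ on }p=1\}$ uniformly enough to run the splitting argument: the coefficient matrix $\big(\frac{1+\lambda^2h_q^2}{h_p^3},-\frac{\lambda^2h_q}{h_p^2};-\frac{\lambda^2h_q}{h_p^2},\frac{\lambda^2}{h_p}\big)$ has determinant $\lambda^2/h_p^4>0$ so it is pointwise positive definite, but degeneration could occur as $h_p\to0$ near the crest of a near-extreme wave, so one needs that for each fixed $t$ (the proposition is stated for fixed $t$) $h_p$ is bounded away from $0$, which holds since $h(\cdot;t)\in C^{2,\gamma}_{pe}(\overline Q)$ with $h_p>0$ on the compact set $\overline Q/\Lambda_0$. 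With that in hand the Dirichlet form is equivalent to the $H^1$ seminorm on the zero-trace subspace, Poincaré closes the coercivity, and the index-equality argument goes through.
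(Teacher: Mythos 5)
Your proposal follows essentially the same route as the paper: both arguments pass to the common bilinear form, split $H^1$ (functions vanishing at $p=0$) into the $A$-harmonic subspace and the zero-trace-at-$p=1$ subspace, use the $a$-orthogonality of these two pieces together with positive definiteness of $a$ on the zero-trace part, and conclude by the min--max characterization that the two negative indices coincide. The only cosmetic difference is that you justify positivity on the zero-trace subspace directly from the pointwise positive definiteness of the coefficient matrix (determinant $\lambda^2/h_p^4>0$ with $h_p$ bounded away from zero for fixed $t$), whereas the paper routes this through the positivity of the corresponding Dirichlet form in $(x,y)$ variables; both are valid.
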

\begin{proof}
We introduce the bilinear form
\begin{eqnarray*}
&&{\bf a}(u,v)=\int_0^1\int_{-\Lambda_0/2}^{\Lambda_0/2}\Big(\Big(\frac{(1+\lambda^2h_q^2)w_p}{h_p^3}
-\frac{\lambda^2h_qw_q}{h_p^2}\Big)\overline{v_p}+\lambda^2\Big(\frac{w_q}{h_p}-\frac{h_qw_p}{h_p^2}\Big)\overline{v_q}\Big)dqdp\\
&&-\int_{-\Lambda_0/2}^{\Lambda_0/2}w\overline{v}dq.
\end{eqnarray*}
We introduce several spaces. The first one $H_{pe}^1(Q)$ consists of $\Lambda_0$-periodic, even functions in $Q$ from the usual Sobolev space $H^1(Q)$ which are equal to zero for $p=0$. The second space, denoted by $\widehat{H}_{pe}^1(Q)$ is a subspace of $H_{pe}^1(Q)$ satisfying $Au=0$ in $Q$. One more space,  denoted by $\widetilde{H}_{pe}^1(\Omega)$, is the subspace of $H_{pe}^1(Q)$ consisting of functions satisfying $u=0$ for $p=1$. One can verify that
$$
{\bf a}(u,v)=0\;\;\;\mbox{for $u\in \widehat{H}_{pe}^1(Q)$ and $v\in \tilde{H}_{pe}^1(Q)$.}
$$

 Spectra of both spectral problems consists of eigenvalues bounded from below with the only accumulation point at $\infty$.
 Denote by $N_1$ the number (accounting their multiplicities) of negative eigenvalues of the problem (\ref{F20aa}) and similar number for the problem (\ref{F20a}) will be denoted by $N_2$. As is known
 $$
 N_1=\max_{X\subset H_{pe}^1(Q),a(w,w)<0,w\in X\setminus\{0\}}\dim X
 $$
 and
 $$
 N_2=\max_{X\subset \widehat{H}_{pe}^1(Q),a(w,w)<0,w\in X\setminus\{0\}}\dim X.
 $$
 Here $X$ is a finite dimensional subspace of $H_{pe}^1(Q)$ or $\widehat{H}_{pe}^1(Q)$ respectively. Since $\widehat{H}_{pe}^1(Q)$ is a subspace of $H_{pe}^1(Q)$, $N_1\geq N_2$. Furthermore, one can verify that
 \begin{equation}\label{A8a}
 {\bf a}(w,w)>0\;\;\mbox{for $w\in  \widetilde{H}_{pe}^1(Q)\setminus \{O\}$}.
 \end{equation}
 This follows from the positivity of the form
 $$
 \int_{-\Lambda_0/2}^{\Lambda_0/2}\int_0^{\eta(x)}\Big(\lambda^2|u_x|^2+|u_y|^2-\omega'(\psi)|u|^2\Big)dxdy
 $$
 considered on  $\Lambda_0$-periodic, even functions from $H^1(D_\eta)$, vanishing on the boundary of $D_\eta$. This implies (\ref{A8a}), after the change of variables.
 %$$
%a(w,w)>0\;\;\mbox{for $w\in \widetilde{H}_{pe}^1(Q)\setminus \{0\}$}.
%$$

Let  $X$ be a finite dimensional subspace of $H_{pe}^1(Q)$ such that $\dim X=N_1$ and ${\bf a}(w,w)<0$ for $w\in X\setminus \{0\}$. We represent $w\in X$ as
$$
w=U+V,\;\;U\in \widehat{H}_{pe}^1(Q)\;\;\mbox{and  $U\in \widetilde{H}_{pe}^1(Q)$}.
$$
% onto $\widehat{H}_p^1(\Omega)$ their elements are defined as solutions to
%$$
%a(U,v)=0\;\;\mbox{for $v\in H_{00}^1(\Omega)$}\;\;\mbox{and $U=u$ for $y=\eta$.}
%$$
%Here $H_{00}^1(\Omega)$ is the subspace of $H_p^1(\Omega)$ satisfying $u=0$ for $y=\eta$. Let also $V=u-U$. Clearly $U\in \widehat{H}_p^1(\Omega)$ and $V\in \tilde{H}_p^1(\Omega)$.
We denote the set of such $U$ by $X_0$. Its dimension is equal to $\dim X$. Indeed if $U=0$ for a certain $w\in X$ then $w=U$ and hence $w$ and $w_p$ is equale to $0$ on the boundary $p=1$. This implies that $w=0$. Furthermore, we have
$$
{\bf a}(U+V,U+V)={\bf a}(U,U)+{\bf a}(V,V)<0,
$$
therefore
$$
{\bf a}(U,U)<0\;\;\mbox{for $U\in X_0\setminus \{0\}$\,}.
$$
This proves that $N_1=N_2$.
\end{proof}

Combination of Proposition \ref{Pm23} and Corollaries \ref{CorM1}, \ref{CorM2} gives
\begin{corollary}\label{Pm23a} The number of negative eigenvalues (counting together with their multiplicities) is the same for the spectral problems
{\rm (\ref{F20a})}, {\rm (\ref{F20aa})} and {\rm (\ref{J17a})}, {\rm (\ref{J17aaa})}.
\end{corollary}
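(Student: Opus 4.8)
The plan is to deduce Corollary \ref{Pm23a} from material already in hand: Proposition \ref{Pm23} equates the number of negative eigenvalues of the two $(q,p)$-problems \eqref{F20a} and \eqref{F20aa}, while Corollaries \ref{CorM1} and \ref{CorM2} tie the two $(x,y)$-problems \eqref{J17a} and \eqref{J17aaa} to \eqref{F20aa} and \eqref{F20a} respectively through the partial hodograph change of variables. So the whole corollary reduces to upgrading the one-way implications in Corollaries \ref{CorM1}--\ref{CorM2} to an isomorphism of spectral problems, and then chaining the equalities.

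First I would record that the map $T\colon \Gamma\mapsto F$, $F(q,p)=\Gamma(q,h(q,p))\,h_p(q,p)$, is a linear isomorphism between the natural spaces of even, $\Lambda_0$-periodic functions on $D_\eta$ that vanish at $y=0$ and those on $Q$ that vanish at $p=0$. Indeed, since $h\in C^{2,\gamma}_{pe}(\overline Q)$ and $h_p=1/\psi_y>0$ on $\overline Q$, the correspondence $q=x$, $p=\psi(x,y)$ is a bi-Lipschitz diffeomorphism of $\overline{D_\eta}$ onto $\overline Q$ preserving evenness and period, multiplication by the positive factor $h_p$ is invertible, and $T^{-1}F(x,y)=F(x,\psi(x,y))\,\psi_y(x,y)$; the condition $\Gamma=0$ on the bottom corresponds to $F=0$ at $p=0$ because $h_p>0$ there. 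Next I would invoke the identities \eqref{J17b} and \eqref{J18a} established in Section \ref{SOkt10a}: these are exact equalities valid for \emph{every} $\Gamma$, not merely for eigenfunctions, so reading them in both directions shows that $\Gamma$ solves \eqref{J17a} with parameter $\theta$ if and only if $F=T\Gamma$ solves \eqref{J18ba} (that is, \eqref{F20aa}) with the same $\theta$, and that $\Gamma$ solves \eqref{J17aaa} with parameter $\mu$ if and only if $F=T\Gamma$ solves \eqref{J18baa} (that is, \eqref{F20a}) with the same $\mu$. Hence $T$ restricts to a linear isomorphism between the $\theta$-eigenspace of \eqref{J17a} and that of \eqref{F20aa}, and between the $\mu$-eigenspace of \eqref{J17aaa} and that of \eqref{F20a}; in particular eigenvalues and their multiplicities agree, so the negative eigenvalue counts agree pairwise. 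Since \eqref{F20a} and \eqref{F20aa} have discrete spectra bounded below (as used in the proof of Proposition \ref{Pm23}), the same holds for \eqref{J17a} and \eqref{J17aaa}, so the counts are well defined.

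Combining these pairwise equalities with Proposition \ref{Pm23}, which equates the negative eigenvalue counts of \eqref{F20a} and \eqref{F20aa}, gives a single common value for all four problems, which is the assertion. I do not expect any essential obstacle here: the only point requiring care is the bookkeeping of the boundary relations on the free surface $B_\eta$ under $T$ — matching the Robin/Steklov terms in \eqref{J17a}--\eqref{J17aaa} with those in \eqref{F20a}--\eqref{F20aa} using $\nu=\nabla\psi/|\nabla\psi|$ and $\psi_y h_p=1$ — but this is exactly what the identity \eqref{J18a} already encodes, so nothing beyond the results quoted above is needed.
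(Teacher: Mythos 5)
Your proposal is correct and follows essentially the same route as the paper, which derives the corollary precisely by combining Proposition \ref{Pm23} with Corollaries \ref{CorM1} and \ref{CorM2}. You usefully spell out the detail the paper leaves implicit — that the substitution $F=\Gamma(q,h)h_p$ is invertible (since $h_p>0$ and the hodograph map is a diffeomorphism) and that the identities \eqref{J17b}, \eqref{J18a} hold for all $\Gamma$, so the eigenspace correspondences are genuine isomorphisms preserving eigenvalues and multiplicities.
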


\section{On negative eigenvalues of the spectral problem (\ref{M3b})}

Let
$$
\Omega_{\xi}=\{(X,Y)\,:\,X\in (-\Lambda/2,\Lambda/2),\;0<Y<\xi(X)\}
$$
and let $H^1_{pe}(\Omega_\xi)$ be the subspace of $H^1(\Omega_\xi)$ consisting of even in $X$ functions $u$ satisfying $u(-\Lambda/2,Y)=(\Lambda/2,Y)$.
Introduce the form
\begin{equation}\label{J21a}
a_\xi(w,w)=\int_{\Omega_{\xi}} (|\nabla w|^2-\omega'(\Psi)|w|^2)dXdY-\int_{-\Lambda/2}^{\Lambda/2} \rho|w|^2\sqrt{1+(\xi')^2}dX.
\end{equation}
Then the spectral problem (\ref{K2az}) admits the following variational formulation: find nonzero $u\in H^1_{pe}(\Omega_\xi)$ and $\theta\in\Bbb R$ such that
\begin{equation}\label{Okt12a}
a_\xi(u,w)=\theta\int_{\Omega_\xi}\frac{1}{\Psi_{Y}}u\overline{w}dXdY\;\;\mbox{for all $w\in H^1_{pe}(\Omega_\xi)$}.
\end{equation}
%According to Proposition \ref{Pm23} the spectral problems (\ref{F20a}) and (\ref{F20aa}) have the same number of negative eigenvalues and by %Corollary \ref{Pm23a} the problems (\ref{F20aa}) and (\ref{J17a}) have also the same number of negative eigenvalues.

 The important observation, which is essential in analysis of subharmonic bifurcations, concerns the spectral problem (\ref{K2az}) and is contained in the following theorem, where $(\Psi_j(X,Y),\xi_j(X),\Lambda_j)$ are defined in Sect. \ref{S24a}.
 %$$
 %\Omega_\xi=\{(X,Y)\,:\, -\Lambda/2<X<\lambda/2,\,0<Y<\xi(X)\}.
 %$$

\begin{theorem}\label{ThMa} Consider the spectral problem {\rm (\ref{K2az})} (or equivalently {\rm (\ref{Okt12a})} with $(\Psi,\xi,\Lambda)=(\Psi_j,\xi_j,\Lambda_j)$. For any positive $\varepsilon$ there are subspaces ${\mathcal X}_j\subset H^1_{pe}(\Omega_{\xi_j})$ such that

(i) $\sup u\in B_\varepsilon ((0,\xi(0)))\cap \Omega_{\xi_j}\;\;\mbox{for $u\in {\mathcal X}_j\setminus \{O\}$}$;

(ii) $a_{\xi_j}(u,u)<0$\;\;\mbox{for $u\in {\mathcal X}_j\setminus\{O\}$};

(iii) $\dim {\mathcal X}_j\to\infty$\;\;\mbox{as $j\to\infty$}.

(iv) If we denote by $N_j$ the number of negative eigenvalues of the problem {\rm (\ref{K2az})} then
$$
N_j\geq \dim {\mathcal X}_j.
$$

%Let $N_j$ be the number of negative eigenvalues of the problem (\ref{J17a}) for $\psi=\psi_j$ and $\eta=\eta_j$. Then
%$$
%N_j\to\infty\;\;\mbox{as $j\to\infty$}.
%$$
\end{theorem}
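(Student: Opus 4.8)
The plan is to build the subspaces $\mathcal{X}_j$ explicitly from a single bump profile transplanted near the stagnation point, exploiting the $120$-degree corner of the limit wave. The underlying mechanism is that the boundary integral $\int \rho |w|^2\sqrt{1+(\xi')^2}\,dX$ in $a_\xi$ carries a strongly negative contribution near the crest as $j\to\infty$, because near a Stokes corner $\psi_y\to 0$ and $\rho = (1+\lambda^2\psi_x\psi_{xy}+\psi_y\psi_{yy})/(\psi_y|\nabla\psi|)$ blows up; meanwhile the Dirichlet energy of a well-localized, suitably rescaled test function near the corner stays comparable to a fixed geometric constant determined by the opening angle. So first I would fix the limit configuration $(\Psi_*,\xi_*,\Lambda_*)$ from Section \ref{S24a} with $\xi_*'(0+)=-1/\sqrt3$, and work in self-similar coordinates centered at $(0,\xi_*(0))$: on the $120$-degree wedge the leading-order limiting stream function is the explicit homogeneous-degree-$3/2$ harmonic-type profile (the Stokes corner flow), for which $\psi_y$ vanishes like the distance to the tip to the power $1/2$ along the free surface. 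The key quantitative input is an estimate, valid for the approximating waves $(\Psi_j,\xi_j)$ and uniform in a shrinking neighborhood of the crest, for $\rho$ from below (large and positive in absolute value after accounting for the sign), together with the gradient bound (\ref{D21a}) and the $C^{0,\alpha}$-convergence $\xi_j\to\xi_*$ recalled in Section \ref{S24a}.

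The construction of $\mathcal{X}_j$ then goes as follows. Pick a fixed smooth cutoff $\phi$ supported in the unit half-disk of the model wedge with $\phi$ not identically zero. For a sequence of scales $r_1>r_2>\cdots>r_m$ forming a "dyadic" family ($r_{k+1} = r_k/\Theta$ for a large fixed $\Theta$), set $w_k(X,Y) = \phi((X,Y-\xi_j(X))/r_k)$ — that is, the bump $\phi$ rescaled to live in the annular region at distance $\sim r_k$ from the crest and transplanted to sit against the actual free surface $Y=\xi_j(X)$. These have essentially disjoint supports (for $\Theta$ large), so it suffices to check $a_{\xi_j}(w_k,w_k)<0$ for each $k$ individually; then $\mathcal{X}_j = \mathrm{span}\{w_1,\dots,w_{m}\}$ has dimension $m$. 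For a single $w_k$: the Dirichlet term $\int_{\Omega_{\xi_j}}|\nabla w_k|^2\,dXdY$ scales like $r_k^0$ (a two-dimensional integral of $|\nabla \phi|^2/r_k^2$ over a region of area $\sim r_k^2$), i.e. it is bounded by a fixed constant $C_0$ independent of $k$ and $j$; the zeroth-order term $\int \omega'(\Psi_j)|w_k|^2$ is $O(r_k^2)$, hence negligible; and the boundary term $-\int \rho_j |w_k|^2\sqrt{1+(\xi_j')^2}\,dX$ is, by the lower bound on the blow-up of $\rho_j$ near the corner, at least $c_0/\sqrt{r_k}$ in absolute value with the correct (negative) sign, for $r_k$ small and $j$ large depending on $r_k$. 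Choosing the scales small enough makes $c_0/\sqrt{r_k}\gg C_0$, giving (ii); choosing them inside $B_\varepsilon$ gives (i); and letting $m=m(j)\to\infty$ slowly as $j\to\infty$ — which is possible precisely because for each fixed finite family of scales the required smallness of $1/j$ is finite — gives (iii). Part (iv) is immediate from the min–max characterization of the number of negative eigenvalues of (\ref{Okt12a}): $N_j = \max\{\dim X : X\subset H^1_{pe}(\Omega_{\xi_j}),\ a_{\xi_j}(u,u)<0\ \forall u\in X\setminus\{0\}\}\geq \dim\mathcal{X}_j$.

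The main obstacle is the quantitative control of $\rho_j$ near the crest: one needs that $\rho_j$ genuinely blows up (and with the sign that makes $-\int\rho_j|w|^2$ negative) uniformly over the approximating sequence, not merely for the limit wave $\Psi_*$, which only solves (\ref{K2a}) weakly. This requires transferring the corner asymptotics of $\Psi_*$ — and ideally the higher-order asymptotics from \cite{KL4} — to $\Psi_j$ for large $j$ via the convergence $\widetilde\Psi_j\to\widetilde\Psi_*$ and $\xi_j\to\xi_*$, with uniform elliptic estimates in annuli $r/2<\mathrm{dist}<2r$ around the crest. This is the technical heart of the argument; once it is in place the variational counting is routine. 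A secondary point to handle carefully is periodicity and evenness: the bumps $w_k$ as defined are not even in $X$, so one should symmetrize, $w_k \mapsto \tfrac12(w_k(X,Y)+w_k(-X,Y))$, which does not change any of the scaling estimates since the supports near $\pm$ the crest are symmetric, and then restrict to the fundamental cell $(-\Lambda_j/2,\Lambda_j/2)$, noting the supports lie well inside it for $r_k$ small.
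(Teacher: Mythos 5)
Your overall architecture (localized test functions near the crest, min--max for part (iv), symmetrization for evenness, transfer of asymptotics from $\Psi_*$ to $\Psi_j$ via the convergence of Section \ref{SectA8}) matches the paper's, and part (iv) is indeed the routine variational characterization. But there is a genuine gap at the single step on which everything rests: the claim that the boundary term of a bump at scale $r_k$ is of order $c_0/\sqrt{r_k}$ while the Dirichlet energy is $O(1)$. Near the Stokes corner one has $|\nabla\Psi|\sim r^{1/2}$, $\Psi_Y\sim r^{1/2}$ and second derivatives $\sim r^{-1/2}$, so the numerator of $\rho$ is $O(1)$ and the denominator is $\Psi_Y|\nabla\Psi|\sim r$; hence $\rho\sim \tfrac{\sqrt3}{2}\,r^{-1}$ (this is exactly $\widehat{\rho}=(\tfrac{\sqrt{3}}{2}+O(X^{1/2}))r^{-1}$ in the paper). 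For a bump supported where $r\sim r_k$, the boundary integral is then $\int_{r\sim r_k}\tfrac{\sqrt3}{2}r^{-1}|w_k|^2\,ds\sim r_k^{-1}\cdot r_k=O(1)$ --- exactly the same scale-invariant order as the Dirichlet energy. The quadratic form is critical (conformally invariant at the corner), so you cannot force negativity by shrinking $r_k$; whether a localized profile has negative energy is a borderline spectral question about the model wedge operator, decided by the precise constant $\sqrt3/2$ relative to the opening angle $2\pi/3$. Concretely, it is equivalent to the transcendental equation $\kappa\tanh(\kappa\pi/3)=\sqrt3/2$ having a positive root $\kappa$, which produces log-oscillatory solutions $\sim\sin(\kappa\log r)$ and hence the eigenfunctions $K_{i\kappa}(\tau_k r)\cos(\kappa\theta)$ of the model problem (\ref{A28a}) used in the paper. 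A generic bump $\phi$ gives no conclusion, and your inequality $c_0/\sqrt{r_k}\gg C_0$ never materializes.

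The argument is repairable in your own framework: by scale invariance it suffices to exhibit \emph{one} profile on the model wedge with strictly negative energy and then transplant scaled copies to disjoint dyadic annuli, which is essentially what the paper does (its frequencies $\tau_k=2e^{(\gamma_\kappa+\gamma)/\kappa}e^{k\pi/\kappa}$ are geometrically spaced, so the truncated Bessel profiles $u_{k,\varepsilon}$ are, up to the cutoffs, rescalings of one another, and $N\approx\log\varepsilon^{-1}$ of them fit between the scales $\varepsilon$ and $\delta$, matching your count $m$). But the existence of that negative-energy profile is precisely the nontrivial input from \cite{KL2}, not a consequence of ``$\rho$ blows up''; as written, your proof omits the mathematical heart of the theorem. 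The remaining perturbative steps (replacing $\widehat{\xi},\widehat{\rho}$ by $\xi_*,\rho_*$ and then by $\xi_j,\rho_j$, controlling $\omega'$, the cutoff errors, and choosing $j$ large for each fixed finite family of scales) are handled in the paper by the decompositions $a_j=a+b_j$, $a=\widehat{a}+\widehat{b}$ and are consistent with what you sketch.
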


The proof of this theorem consists of several steps and presented in the remaining part of this section. %In the next section we show that it is sufficient to prove it for the spectral problem %\ref{J17a}.
In Sect. \ref{S23b} and \ref{SectA8} we  prove a convergence of $(\Psi (X,Y;t_j),\xi(X;t_j),\Lambda_j)$ to a Stokes extreme wave. The negative spectrum of the Frechet derivative corresponding to the extreme waves is studied in \cite{KL2} and using this result we complete the proof of Theorem \ref{ThMa} in Sect.\ref{SA21a}. %we show that there are more and more negative eigenvalues for the Frechet derivative when we approach the extreme wave.

\subsection{Some estimates for solutions to (\ref{Okt6aa})}\label{S23b}

Let
$$
\omega_1=\max_{0\leq p\leq 1}(|\omega(p)|+|\omega'(p)|).
$$
The following proposition is a local version of Proposition 3 from \cite{KLN17}.

\begin{proposition}\label{Mp1}  Let  $\delta >0$ be given as well as a disc $B$ of radius $\rho>0$ and let $C_B=\sup_{(X,\xi(X))\in B} |\xi'(X)|$. Let also  $\inf_B\Psi_Y\geq\delta$. Then there exists constants $\widehat{\gamma}\in (0,1]$ and $C>0$, depending only on $R$, $\delta$, $\omega_1$, $\rho$ and $C_B$ such that any solution $(\Psi,\xi)\in C^{1}(\overline{{\mathcal D}_\xi}\cap\overline{B})\times C^{0,1}(\Bbb R)$  of {\rm (\ref{K2a})} satisfies
\begin{equation}\label{M25a}
||\Psi||_{C^{2,\widehat{\gamma}}({\mathcal D}_\xi\cap\frac{1}{2}B)}\leq C,\;\;||\xi||_{C^{2,\widehat{\gamma}}(I)}\leq C,
\end{equation}
where $\frac{1}{2}B$ is the disc with the same center and radius $\frac{1}{2}\rho$ and $I$ is the projection of $\frac{1}{2}B$ on $x$-axis.

\end{proposition}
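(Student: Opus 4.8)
The plan is to establish the interior $C^{2,\widehat\gamma}$ bound by a standard localization-plus-Schauder argument applied to the quasilinear equation in $\mathcal D_\xi$, combined with a boundary Schauder estimate near the free surface $Y=\xi(X)$ where the Bernoulli condition supplies an oblique (indeed, after flattening, a Neumann-type) boundary condition. First I would cover $\tfrac12 B$ by finitely many smaller balls: those whose closure lies in the interior of $\mathcal D_\xi$, and those meeting $\mathcal B_\xi$. On interior balls, the equation $\Delta\Psi=-\omega(\Psi)$ with $|\Psi|\le$ (a bound coming from $R$ via the maximum principle, using $\Psi=1$ on $\mathcal B_\xi$, $\Psi=0$ on the bottom) and $|\nabla\Psi|$ bounded (which here is part of the hypothesis $\inf_B\Psi_Y\ge\delta$ together with $C_B$, or alternatively via the known gradient bound) gives, through $W^{2,q}$ estimates and Sobolev embedding followed by Schauder estimates, a uniform $C^{2,\widehat\gamma}$ bound with $\widehat\gamma$ depending only on the listed quantities.

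The boundary balls require more care. I would flatten the free boundary locally using the map $(X,Y)\mapsto(X, Y-\xi(X))$; since $\xi\in C^{0,1}$ with Lipschitz constant controlled by $C_B$, this is only a Lipschitz change of variables, which is not enough for Schauder theory. So the better route is the partial hodograph transform already introduced in the paper: pass to $(q,p)=(X,\Psi)$, in which the domain becomes the fixed strip $Q$ and the free-boundary condition becomes the fixed-boundary condition $\frac{1+h_q^2}{2h_p^2}+h=R$ on $p=1$. The condition $\inf_B\Psi_Y\ge\delta$ is exactly what guarantees $h_p=1/\Psi_y$ is bounded above there, so the transformed equation $\big(\frac{1+h_q^2}{2h_p^2}+\wp(p)\big)_p-\big(\frac{h_q}{h_p}\big)_q=0$ is uniformly elliptic on the relevant portion of $Q$, with ellipticity constants and coefficient bounds depending only on $R,\delta,\omega_1,C_B$. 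Then I would apply interior-and-boundary Schauder estimates for this quasilinear divergence-form equation (first getting a $C^{1,\widehat\gamma}$ bound — e.g. by De Giorgi–Nash–Moser for the $q$-derivative equation, or by Lieberman's boundary gradient estimates for the oblique problem — and then bootstrapping to $C^{2,\widehat\gamma}$ once the coefficients are Hölder), obtaining $\|h\|_{C^{2,\widehat\gamma}}$ bounded on a sub-strip, hence $\|\Psi\|_{C^{2,\widehat\gamma}}$ and $\|\xi\|_{C^{2,\widehat\gamma}}=\|h(\cdot,1)\|_{C^{2,\widehat\gamma}}$ bounded on the projection $I$, after transforming back. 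Throughout, the periodicity and evenness play no role in the local estimate; only the stated local hypotheses on $B$ are used, as the proposition claims.

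The main obstacle is the low a priori regularity: we are only given $\Psi\in C^1$ and $\xi\in C^{0,1}$, so one cannot differentiate the equations freely, and the free boundary is merely Lipschitz in the original variables. The key that unlocks everything is the nondegeneracy $\inf_B\Psi_Y\ge\delta$, which makes the hodograph transform a bona fide (bi-Lipschitz, and then smooth) change of variables on $B$ and converts the geometry-dependent free-boundary problem into a fixed-domain problem with smooth structure; after that the estimates are the routine elliptic-regularity machinery referenced implicitly via \cite{KLN17}, which is why the present statement is advertised as merely a local version of Proposition 3 there. I would therefore spend the bulk of the write-up justifying the change of variables and the uniform ellipticity, and only cite the Schauder/De Giorgi–Nash estimates for the conclusion.
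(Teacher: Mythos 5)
The paper does not actually prove this proposition: it is stated as ``a local version of Proposition 3 from \cite{KLN17}'' and the argument is deferred entirely to that reference. Your outline is essentially the standard proof behind such statements in this literature, and it is sound: interior Schauder for $\Delta\Psi=-\omega(\Psi)$, and, near the free surface, the partial hodograph transform $(q,p)=(X,\Psi)$ (legitimate precisely because $\inf_B\Psi_Y\ge\delta$) followed by Ladyzhenskaya--Uraltseva/Lieberman $C^{1,\alpha}$ estimates for the quasilinear divergence-form problem with its nonlinear oblique boundary condition, and a Schauder bootstrap to $C^{2,\widehat\gamma}$; you are also right that naively flattening the merely Lipschitz boundary would not suffice. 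Two points you should make explicit rather than leave implicit. First, uniform ellipticity of the transformed equation requires $h_p=1/\Psi_Y$ to be bounded \emph{below} as well as above, i.e.\ an upper bound on $|\nabla\Psi|$; this does not follow from the hypotheses of the proposition alone but from the Bernoulli condition on ${\mathcal B}_\xi$ together with the global gradient bound (\ref{D21a}) (Proposition 4.2 of \cite{KLN17}), which is why the constants may depend on $R$ and $\omega_1$. Second, if the disc $B$ reaches the bottom $Y=0$ one needs the (easy) boundary Schauder estimate for the Dirichlet condition $\Psi=0$ on a flat boundary; this case should at least be mentioned. With those additions your sketch is a complete and correct substitute for the citation.
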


We recall  that the limit functions $(\xi_*,\Psi_*,\Lambda_*)$ satisfy
\begin{eqnarray}\label{M19a}
 &&\Delta\Psi_*+\omega(\Psi_*)=0\;\;\mbox{in ${\mathcal D}_{\xi_*}$},\nonumber\\
 &&\frac{1}{2}|\nabla\Psi_*|^2+\xi_*=R\;\;\mbox{on ${\mathcal B}_{\xi_*}$},\nonumber\\
 &&\Psi_*(X,0)=0,\;\;\Psi_*(X,\xi_*(X))=1
 \end{eqnarray}
 in a weak sense. Due to the assumptions (see Sect.\ref{S24a})
\begin{equation}\label{M26a}
\lim_{X\to 0+}\frac{\xi_*(X)-R}{X}=-\frac{1}{\sqrt{3}}
\end{equation}
 and by Proposition \ref{Mp2} $\Psi_{*Y}>0$ in $\overline{{\mathcal D}_{\xi_*}}\setminus \{(0,R)\}$. Using Proposition \ref{Mp3}, we get
 \begin{equation}\label{M26aa}
 \xi_*\in C^{2,\widehat{\gamma}},\;\;\Psi_*\in C^{2,\widehat{\gamma}}\;\;\mbox{outside stagnation points}.
 \end{equation}
 The following theorem is proved  in \cite{KL2}.
\begin{theorem}\label{TA30}
Assume that $\Psi_{*Y}>0$ for $(X,Y)\in \overline{{\mathcal D}_{\xi_*}}$ and $Y\neq R$ and $\xi_*$ satisfies (\ref{M26a}). Then
\begin{equation}\label{A30aa}
\xi_{*X}(X)=-\frac{1}{\sqrt{3}}+\frac{2^{\frac{3}{2}}}{3^{\frac{5}{4}}}\omega(1)\sqrt{X}+a_1\omega^2(1)X+f(X)
\end{equation}
for some explicit $a_1>0$, where $f=O(X^{\frac{3}{2}(\tau_1-1)})$, $f_X=O(X^{\frac{3}{2}(\tau_1-1)-1})$ as $X\rightarrow 0+$ and $\tau_1\approx 1.8$ is the smallest root of $\tau_1=-\frac{1}{\sqrt{3}}\cot (\frac{\pi}{2}\tau_1)$.
\end{theorem}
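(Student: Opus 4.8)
\emph{Sketch of the intended proof.} The plan is to pass to a fixed domain by the partial hodograph transform, reduce the question near the stagnation point to a one–dimensional Nekrasov–type integral equation perturbed by the vorticity, and then build the asymptotics of its solution iteratively, the remainder being controlled by the indicial exponents of the linearized equation in the $120^\circ$ fluid wedge. First I would localize: working with $\lambda=1$ in the $(X,Y)$ variables and putting the crest at $X=0$, one has $\xi_*(0)=R$ and, by the Bernoulli relation in (\ref{M19a}), $\nabla\Psi_*(0,R)=0$; this is the only stagnation point on a period. Applying $q=X$, $p=\Psi_*$ turns (\ref{M19a}) into the quasilinear problem (\ref{J4a}) (with $\lambda=1$) for $h_*(q,p)=Y$ on the strip $Q$, with $\xi_*(X)=h_*(X,1)$ and $h_{*p}=1/\Psi_{*Y}\to+\infty$ as $(q,p)\to(0,1)$. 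By Propositions \ref{Mp2} and \ref{Mp3} this transform is a $C^{2,\widehat\gamma}$ diffeomorphism off $(0,1)$ and (\ref{M26aa}) holds, so only the behaviour of $h_*$ near $(0,1)$ is at issue.

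Next I would pass to the streamline direction $\beta=\arg(\Psi_{*Y}+i\Psi_{*X})$ and the speed variable $\tau=-\tfrac12\log(\Psi_{*X}^2+\Psi_{*Y}^2)$. In the irrotational case $\tau+i\beta$ is holomorphic in the complex potential and $\beta$ on the surface satisfies Nekrasov's equation; here the Cauchy–Riemann system for $(\tau,\beta)$ acquires a right–hand side equal to the vorticity $\omega(\Psi_*)$, which on the free streamline equals the constant $\omega(1)$. Combining this with the Bernoulli condition $\tfrac12 e^{-2\tau}+\xi_*=R$ on $p=1$ and the kinematic condition, I would derive a closed equation for $b:=\beta|_{p=1}-\tfrac\pi6$ of the form
\[
b(t)=\int K(t,s)\,\mathcal G\big(b(s)\big)\,ds+\omega(1)\,\mathcal R\big[b;\omega(1)\big](t),
\]
where $K$ is the classical Nekrasov kernel, $\mathcal G$ its analytic nonlinearity with $\mathcal G(0)=0$, and $\mathcal R$ a genuinely lower–order operator carrying the vorticity. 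The surface slope is recovered from $\xi_{*X}=\cot\!\big(\tfrac\pi2+b\big)$ along $p=1$, and the Stokes corner hypothesis (\ref{M26a}) is exactly $b(0+)=0$.

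Passing then to the variable $t$ that uniformizes the corner (so that the boundary arc–length parameter and $X$ are comparable powers of each other, the Stokes corner map contributing the exponent $\tfrac23$), the linearization $\mathcal L$ of the right–hand side at $b=0$ becomes, to leading order, a dilation–homogeneous (Euler–type) operator on the half–line whose indicial exponents are exactly the roots $\nu$ of $\nu=-\tfrac1{\sqrt3}\cot(\tfrac\pi2\nu)$; this transcendental relation is the separation–of–variables condition for the Laplacian with the linearized Bernoulli boundary condition in the $2\pi/3$ fluid wedge at the crest. Solving $\mathcal L b=\omega(1)+\text{(higher order)}$ produces the explicit particular solution with leading term $\tfrac{2^{3/2}}{3^{5/4}}\,\omega(1)\sqrt X$; substituting it into the quadratic part of $\mathcal G$ and into $\mathcal R$ and solving once more produces the term $a_1\,\omega^2(1)\,X$, with $a_1$ an explicit combination of the second iterate of $K$ (positive after a finite computation). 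Everything not captured by these two terms is annihilated by $\mathcal L$ to the relevant order, hence decays like the first indicial power exceeding those producing the explicit terms, i.e. like $X^{\frac32(\tau_1-1)}$ with $\tau_1\approx1.8$ the smallest root; differentiating the integral equation, or applying interior Schauder estimates in the uniformizing variable, upgrades this to the stated bound on $f_X$.

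Two points carry the real weight. First, because there is no conformal map in the vortical case, the change of variables leading to the perturbed Nekrasov equation and the operator $\mathcal R$ must be constructed by hand and shown to be well defined and smooth up to the corner; this uses the $C^1$ regularity encoded in (\ref{M26a}) together with (\ref{M26aa}) and the elliptic estimates of Proposition \ref{Mp1}. Second — and this is the crux — one must prove that $\mathcal L$ is Fredholm, and invertible modulo the explicitly known resonances, between weighted Hölder spaces on the half–line with weight exponent strictly between the first two indicial roots; only then does the iteration close by a contraction/bootstrap argument and does the remainder have \emph{exactly} the order $X^{\frac32(\tau_1-1)}$ claimed, with the differentiated estimate following. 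Computing $a_1$ explicitly and checking $a_1>0$ is the remaining, purely computational, nuisance.
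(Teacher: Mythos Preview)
The paper does not actually prove this theorem: it is quoted verbatim from \cite{KL2} (with the underlying analysis in \cite{KL4,KL3}), so there is no in-paper proof to compare your sketch against. What you have written is therefore not a reconstruction of the paper's argument but an independent attempted proof of a result the paper imports.

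On the substance of your sketch: the overall architecture---localize at the crest, identify the indicial roots of the linearized problem in the $2\pi/3$ wedge, and build the expansion by iteration with the remainder controlled by the first nontrivial indicial exponent---is correct in spirit and indeed matches the philosophy of \cite{KL4}. The transcendental equation for $\tau_1$ does arise exactly as you say, from separation of variables for the Laplacian with the linearized Bernoulli condition on the wedge faces, and the factor $\tfrac{3}{2}$ in the remainder exponent is the correct conversion between the wedge radial variable and $X$.

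Where your plan is vulnerable is the Nekrasov reduction. In the irrotational case $\tau+i\beta$ is a holomorphic function of the complex potential, and the Nekrasov kernel is the explicit periodic Hilbert transform; this is what makes the boundary integral equation tractable. With vorticity, $\Delta\Psi=-\omega(\Psi)$ and there is no holomorphic structure: the ``Cauchy--Riemann system with right-hand side $\omega(\Psi_*)$'' you invoke is not first-order in $(\tau,\beta)$ in any clean way, and the Dirichlet--Neumann map for $\Delta+\omega'(\Psi_*)$ is not a convolution, so the operator you call $K$ is not the classical Nekrasov kernel and your perturbation term $\mathcal R$ would have to absorb a nonlocal, non-small piece. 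The route actually taken in \cite{KL4} avoids this entirely: it works directly with the elliptic boundary value problem in a neighbourhood of the corner, uses a Kondrat'ev/Mellin-type analysis in weighted spaces on the wedge (this is where the indicial roots enter), and obtains the successive terms of the expansion by solving model problems in the exact $120^\circ$ sector, with $\omega(1)$ appearing as the inhomogeneity at the first step. Your Fredholm-in-weighted-H\"older step is essentially the right idea, but it should be applied to the two-dimensional problem in the wedge rather than to a one-dimensional boundary equation whose derivation in the vortical setting is itself the hard part.
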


Using (\ref{Ma8a}), we derive from the Bernoulli equation
\begin{equation}\label{M21bc}
\Psi_{jY}(X,\xi_j(X))^2\geq \frac{2}{1+{\mathcal B}_j^2}(R- \xi_j(X)),\;\;{\mathcal B}_j=\frac{\Lambda_0}{\Lambda_j}{\mathcal B},
\end{equation}
where $(\Psi_j,\xi_j,\Lambda_j)$ was introduced in Sect.\ref{S24a}. We put ${\mathcal B}_*=\max_j{\mathcal B}_j$. Due to the convergence $\lambda_j\to\lambda_*$,  ${\mathcal B}_*<\infty$.

\begin{proposition}\label{Mp2} Let $\delta>0$ be a small number and $X_*=\delta$. Then
there exist positive constants $c_k$, $k=1,\ldots,5$, and the index $j(\delta)$ such that $|X-X_*|\leq c_1\delta$ and $j\geq j(\delta)$  implies
\begin{equation}\label{Ma8baa}
|R-\xi_j(X)|\geq c_2\delta
\end{equation}
and
%$\epsilon:=R-\xi(X)>0$ and let $\delta\leq\epsilon/(4{\mathcal B})$. Then
\begin{equation}\label{Ma9a}
\Psi_{jY}(X,Y)\geq c_4\delta\;\;\mbox{for   $|Y-\xi_j(X)|\leq c_5\delta$}.
\end{equation}
Here the constants $c_k$, $k=1,\ldots,5$, are independent of $\delta$ and $j$.
\end{proposition}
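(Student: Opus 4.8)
The plan is to combine the elementary geometry of the limiting profile $\xi_*$ with the Bernoulli law and a barrier argument for $\Psi_{jY}$ carried out at the intrinsic length scale $\delta$. First I would establish (\ref{Ma8baa}). Since $\xi_*$ is even and strictly decreasing on $(0,\Lambda_*/2)$, with $\xi_*(0)=R$ and $(\xi_*(X)-R)/X\to-1/\sqrt3$ as $X\to0+$ by (\ref{M26a}), for $\delta$ small and every $X\in[\delta/2,3\delta/2]$ one has $R-\xi_*(X)\ge R-\xi_*(\delta/2)\ge\frac1{4\sqrt3}\,\delta$. Using the convergence $\xi_j\to\xi_*$ in $C^{0}$ from Sect.~\ref{S24a}, one picks $j(\delta)$ so that $\|\xi_j-\xi_*\|_{C^{0}}\le\frac1{8\sqrt3}\delta$ for $j\ge j(\delta)$, hence $R-\xi_j(X)\ge c_2\delta$ with $c_2=\frac1{8\sqrt3}$ for $|X-X_*|\le\frac12\delta$; in particular (\ref{Ma8baa}) holds once the constant $c_1$ fixed below satisfies $c_1\le\frac12$.

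For the surface bound, (\ref{M21bc}) together with (\ref{Ma8baa}) and ${\mathcal B}_j\le{\mathcal B}_*$ gives
\[
\Psi_{jY}(X,\xi_j(X))\ \ge\ c_3\sqrt\delta,\qquad c_3:=\Big(\tfrac{2c_2}{1+{\mathcal B}_*^{2}}\Big)^{1/2},
\]
for $|X-X_*|\le\frac12\delta$ and $j\ge j(\delta)$. To carry this into the strip below the surface I would use that $v_j:=\Psi_{jY}$ is positive in ${\mathcal D}_{\xi_j}$ (the Stokes waves of Theorem~\ref{T11} have $\psi_y=1/h_p>0$), solves $\Delta v_j+\omega'(\Psi_j)v_j=0$ there (differentiate the first equation of (\ref{A27aa}) in $Y$), and satisfies $0<v_j\le C(R,\omega_0)$ by (\ref{D21a}). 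Put $z_0=(X_*,\xi_j(X_*))$ and $\Omega_j=B_{K\delta}(z_0)\cap{\mathcal D}_{\xi_j}$ with $K=\tfrac12$, so that for $\delta$ small $B_{K\delta}(z_0)$ stays far from $\{Y=0\}$ and from the lateral boundaries. Since $\xi_j$ is ${\mathcal B}_*$–Lipschitz, after rescaling by $K\delta$ the set $\Omega_j$ lies in the fixed domain $B_1(0)\cap\{\sigma<{\mathcal B}_*|\sigma'|\}$, and the part of $\partial B_{K\delta}(z_0)$ meeting $\overline{{\mathcal D}_{\xi_j}}$ rescales into $\partial B_1(0)\cap\{\sigma\le{\mathcal B}_*|\sigma'|\}$, which omits a fixed arc near the top of the circle. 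I would then fix, once and for all, a function $\widetilde P$ on $B_1(0)$ with $\Delta\widetilde P=\kappa>0$, with $\widetilde P\le0$ on $\partial B_1(0)\cap\{\sigma\le{\mathcal B}_*|\sigma'|\}$, and with $\widetilde P\ge c_0>0$ on a fixed ball around $0$; such $\widetilde P$ exists — solve $\Delta\widetilde P=\kappa$, $\kappa$ small, with boundary data a large constant on the omitted top arc and $0$ elsewhere.

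Setting $w_j(X,Y)=c_3\delta\,\widetilde P\big(((X,Y)-z_0)/(K\delta)\big)$, I would verify, for $\delta$ small: (a) $w_j$ is a subsolution of $\Delta+\omega'(\Psi_j)$ on $\Omega_j$, since $\Delta w_j=c_3\kappa/(K^{2}\delta)$ beats the $O(\delta)$ zeroth–order term; (b) $w_j\le v_j$ on $\partial\Omega_j$, since on the circular part $w_j\le0<v_j$ and on the surface part $w_j\le c_3\delta\,\|\widetilde P\|_\infty\le c_3\sqrt\delta\le v_j$; and (c) the maximum principle for $\Delta+\omega'(\Psi_j)$ holds on $\Omega_j$, since $\operatorname{diam}\Omega_j\le2K\delta$ forces $\lambda_1(-\Delta;\Omega_j)>\omega_1$. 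Hence $v_j\ge w_j$ on $\Omega_j$. Choosing $c_1,c_5$ small (so that the rescaled strip $\{\,|X-X_*|\le c_1\delta,\ \xi_j(X)-c_5\delta\le Y\le\xi_j(X)\,\}$ lies in $\Omega_j$ and maps into the fixed ball where $\widetilde P\ge c_0$), we get $\Psi_{jY}\ge c_3c_0\,\delta=:c_4\delta$ on this set (on the surface itself the stronger bound $v_j\ge c_3\sqrt\delta\ge c_4\delta$ is already at hand), which is (\ref{Ma9a}). The constants $c_1,\dots,c_5$ depend only on $R$, $\omega$ and ${\mathcal B}_*$, not on $\delta$ or $j$, and $j(\delta)$ is chosen last.

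The hard part is this last step, and the reason is that there is no uniform–in–$j$ bound on the second derivatives of $\Psi_j$, nor on the curvature of $\xi_j$, near the crest: both degenerate as $j\to\infty$. Moreover Proposition~\ref{Mp2} is itself the ingredient that afterwards makes Proposition~\ref{Mp1} — hence any interior Schauder estimate — applicable, so one cannot use such estimates here. The remedy is to work solely with the first–order quantity $\Psi_{jY}$, whose positivity, the Bernoulli identity on the free surface, and the bounds (\ref{D21a}) and (\ref{Ma8a}) are exactly what survives the limit $j\to\infty$, and to run the comparison on a disc of radius comparable to $\delta$, so that every barrier and maximum–principle constant is scale invariant and uniform in $j$.
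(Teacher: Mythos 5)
Your proof is correct, and its first half (deriving \eqref{Ma8baa} from the corner asymptotics of $\xi_*$ plus uniform convergence $\xi_j\to\xi_*$, then feeding this into the Bernoulli inequality \eqref{M21bc} to get $\Psi_{jY}\gtrsim\sqrt{\delta}$ on the surface near $X_*$) is essentially identical to the paper's. Where you diverge is in propagating the surface bound into the strip $\{\,\xi_j(X)-c_5\delta\le Y\le\xi_j(X)\,\}$. The paper first removes the zeroth-order term by the substitution $u=e^{-sY}\Psi_{jY}$ (so that $\Delta u+2s\partial_Yu\le 0$ for a fixed $s$ with $s^2\ge-\min\omega'$) and then compares $u$ with the explicit separated barrier $\cos(s_*(X-X_*))\sinh(s_*(Y-a))$ on the curvilinear rectangle $\{|X-X_*|<X_*,\ a<Y<\xi_j(X)\}$, with $s_*X_*=\pi/2$; the smallness of $\alpha$ needed to fit the barrier under $u$ on the free surface is exactly compensated by the growth of $\sinh$ at the scale $s_*\sim\delta^{-1}$, which is what produces the factor $\delta$ in \eqref{Ma9a}. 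You instead work on the disc $B_{\delta/2}(z_0)$, absorb the zeroth-order term by the smallness of the domain (equivalently, $\lambda_1(-\Delta;\Omega_j)\gtrsim\delta^{-2}\gg\omega_1$, or one may invoke $v_j>0$ itself as a positive supersolution in the sense of Protter--Weinberger), and use a single fixed barrier $\widetilde P$ on the unit disc built from the harmonic measure of the arc omitted by the Lipschitz cone $\{\sigma\le{\mathcal B}_*|\sigma'|\}$, rescaled by $K\delta$ and multiplied by $c_3\delta$. Both arguments are scale-invariant and uniform in $j$, which is the essential point; yours is somewhat more robust (it never needs the explicit trigonometric/hyperbolic identities, only that the rescaled free surface is a uniform Lipschitz graph and that the circular boundary misses a fixed arc), while the paper's is more explicit and gives the constant $c_4$ in closed form. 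One small bookkeeping remark: your comparison on the surface part of $\partial\Omega_j$ uses the Bernoulli lower bound for all $|X-X_*|\le K\delta$, so the constant $c_1$ in \eqref{Ma8baa} must be taken at least $K=1/2$ at that stage (as you indeed arranged), and the final $c_1,c_5$ are then shrunk so that the strip lands in the region where $\widetilde P\ge c_0$; this is consistent but worth stating explicitly since the proposition asserts a single $c_1$.
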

\begin{proof}
Since $\xi_j\to\xi_*$ in $C^{0,\alpha}_{pe}(\Bbb R)$ as $j\to\infty$, for every $\varepsilon>0$ there exists $j(\varepsilon)$ such that
\begin{equation}\label{Ma8b}
|\xi_j(X)-\xi_*(X')|+\Big |\frac{\xi_j(X)-\xi_j(X')}{|X-X'|^\alpha}-\frac{\xi_*(X)-\xi_*(X')}{|X-X'|^\alpha}\Big|\leq\varepsilon
\end{equation}
for $j\geq j(\varepsilon)$ and $0\leq X',X\leq 2\delta$.
We choose $\alpha=\frac{3}{2}(\tau_1-1)-1$. Applying the asymptotic formula (\ref{A30aa}), we get
\begin{equation}\label{Ma9b}
|\xi_*(X)-\xi_*(X')+c_*(X-X')|\leq c_6\delta^{\alpha}|X-X'|,\;\;c_*=\frac{1}{\sqrt{3}}.
\end{equation}
% and therefore
%$$
%|\xi_j(X)-\xi_j(X')|\leq\varepsilon |X-X'|^\alpha+c_*|X-X'|+c_3\delta^{1+\alpha}.
%$$
%By choosing $\varepsilon=c_4\delta^{1-\alpha}$ and $|X-X'|\leq c_5\delta$ we get $|\xi_j(X)-\xi_j(X')|\leq c_1\delta$, where we can control the %smallness of the constant $c_1$.
Now using (\ref{Ma8b}) and (\ref{Ma9b}), we obtain
\begin{equation}\label{Ma8ba}
R-\xi_j(X)=\xi_*(0)-\xi_*(X_*)+\xi_*(X)-\xi_j(X)\geq c_*\delta-\varepsilon-c_6\delta^{1+\alpha}.
\end{equation}
By choosing $\varepsilon=c_*\delta/2$ and $c_1$ and $c_6$ sufficiently small we arrive at (\ref{Ma8baa}).
%where the last term came from the inequality (\ref{Ma8b})  with $X_*$ replaced by $0$ and from positivity of $R-\xi_j(0)$. Now (\ref{Ma8baa})
%follows from (\ref{Ma8ba})

Let us turn to proving (\ref{Ma9a}). We shall omit the index $j$ in the proof of proposition.
Introduce  function $u=e^{-sY}\Psi_Y$
%$$
%x_1=X-X_*,\;\;y_1=Y-\xi(X_*),\;\;\mbox{and}\;\; u=e^{-sy_1}\Psi_Y.
%$$
Then
$$
\Delta u+2s\partial_{Y}u=-(s^2+\omega'(\Psi))u\;\;\mbox{and}\;\;\Psi_Y>0\;\;\mbox{in ${\mathcal D}_\xi$}.
$$
Choosing $s$ to be non-negative and $s^2\geq -\min_{p\in [0,1]}\omega'(p)$, we get $\Delta u+2s\partial_{Y}u\leq 0$. We will compare the function $u$ with the barrier function
$$
U(X,Y)=\cos (s_*(X-X_*))\sinh (s_*(Y-a)).
$$
Then
$$
\Delta U+2s\partial_{Y}U=2s\cos (s_*(X-X_*))\cosh (s_*(Y-a).%\;\;a=\xi(X_*).
$$
Furthermore, $U=0$ for $X-X_*=\pm X_*$, where $s_*X_*=\pi/2$, and $U=0$ for $Y=a$. Consider the function
$$
V=u-\alpha U\;\;\mbox{with $\alpha>0$}.
$$
Then
$$
\Delta V+2s\partial_{Y}V\leq 0\;\;\mbox{for $|X-X_*|\leq X_*$ and for $a\leq Y\leq \xi(X)$}
$$
Furthermore $V>0$ for $X=\pm X_*$ and for $Y=a$. From (\ref{M21bc}) and (\ref{Ma8baa}) it follows that
$$
\Psi_Y(X,\xi(X))\geq c_7\delta\;\;\mbox{for $|X-X_*|\leq c_1\delta$}.
$$
Choosing $\alpha$ to satisfy
$$
e^{-s_*\xi(X)}c_7\delta\geq \alpha \cosh(s_*(\xi(X)-a))\;\;\mbox{for $|X-X_*|\leq c_1\delta$}
$$
%and $\pi/(2s_*)\leq c_1\delta/2$
we arrive at (\ref{Ma9a}).

%The proof is used the positivity of the function $\Psi_Y$ in ${\mathcal D}_\xi$, the inequality (\ref{M21bc}) and is based on a application of a %standard barrier function and the maximum principle.

\end{proof}
Combination of Propositions \ref{Mp1} and \ref{Mp2} gives the following

\begin{proposition}\label{Mp3} Let the assumptions of {\rm Proposition \ref{Mp2}} be satisfied and $\delta=|X_*|$. Then the inequality {\rm (\ref{M25a})} is valid with
$B=B_{c\delta} (X_*,\xi_j(X_*))$, where $c$ is independent of $j$ and $\delta$.
\end{proposition}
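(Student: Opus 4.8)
The plan is to combine the two preceding results in the obvious way: Proposition \ref{Mp2} produces a point $X_*=\delta$ together with a whole disc around $(X_*,\xi_j(X_*))$ on which the hypotheses of Proposition \ref{Mp1} (a lower bound on $\Psi_{jY}$, and a Lipschitz bound on $\xi_j$) are satisfied uniformly in $j$ for $j\ge j(\delta)$, and then to simply quote Proposition \ref{Mp1}. First I would set $B=B_{c\delta}(X_*,\xi_j(X_*))$ with a constant $c$ to be chosen small, namely $c\le\min\{c_1,c_5\}$, so that the projection of $B$ onto the $X$-axis lies in $\{|X-X_*|\le c_1\delta\}$ and the vertical extent of $B$ near the surface lies in $\{|Y-\xi_j(X)|\le c_5\delta\}$; here one uses $|\xi_{jX}|\le\lambda_j\mathcal B\le\mathcal B_*$ from (\ref{Ma8a}) so that the surface does not exit the disc in the $Y$-direction until one has moved a definite fraction of $c\delta$ horizontally. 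Then by (\ref{Ma9a}) of Proposition \ref{Mp2} we have $\inf_B\Psi_{jY}\ge c_4\delta$, which gives the required $\delta$ in Proposition \ref{Mp1} equal to $c_4\delta$ (a mild notational clash, but harmless).

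Next I would check that all the constants on which Proposition \ref{Mp1} depends are in fact uniform in $j$ and can be made explicit functions of $\delta$. The radius of $B$ is $\rho=c\delta$; the Lipschitz constant $C_B=\sup_{(X,\xi_j(X))\in B}|\xi_{jX}(X)|$ is bounded by $\mathcal B_*$, which by (\ref{Ma8a}) and the convergence $\lambda_j\to\lambda_*$ is finite and $j$-independent; the Bernoulli constant $R$ and $\omega_1$ are fixed; and the lower bound on $\Psi_{jY}$ is $c_4\delta$ as just noted. Hence $\widehat\gamma$ and $C$ in Proposition \ref{Mp1} depend only on $R$, $c_4\delta$, $\omega_1$, $c\delta$ and $\mathcal B_*$, i.e. only on $\delta$ (and fixed data), uniformly in $j\ge j(\delta)$. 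Applying (\ref{M25a}) on $\tfrac12 B$ then gives $\|\Psi_j\|_{C^{2,\widehat\gamma}(\mathcal D_{\xi_j}\cap\frac12 B)}\le C$ and $\|\xi_j\|_{C^{2,\widehat\gamma}(I)}\le C$ with $I$ the projection of $\tfrac12 B$, which is exactly the assertion with a relabelled constant $c$.

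The only genuine point requiring care — and the one I would single out as the main obstacle — is the compatibility of the geometric constants: one must verify that with $c$ chosen small (depending only on $c_1$, $c_5$ and $\mathcal B_*$, all of which are $\delta$-independent), the disc $B_{c\delta}(X_*,\xi_j(X_*))$ both stays inside the region where (\ref{Ma8baa})–(\ref{Ma9a}) hold and still contains a nontrivial piece of the fluid domain $\mathcal D_{\xi_j}$ adjacent to the free surface, so that Proposition \ref{Mp1} has content. This is where the surface slope bound (\ref{Ma8a}) enters: it forces $\xi_j$ to stay within $O(\mathcal B_*\, c\delta)$ of the horizontal line $Y=\xi_j(X_*)$ across $B$, so that choosing $c$ so that $2\mathcal B_*c<c_5$ keeps the portion of the surface over $I$ inside $B$ and the estimate (\ref{Ma9a}) applies along it. Everything else is a direct citation, and once the disc is positioned correctly the estimate (\ref{M25a}) transfers verbatim.
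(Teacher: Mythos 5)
Your proposal is correct and follows exactly the route the paper intends: the paper gives no separate argument for this proposition beyond the phrase that it is a ``combination of Propositions \ref{Mp1} and \ref{Mp2}'', and your write-up supplies precisely that combination — using (\ref{Ma9a}) to furnish the lower bound on $\Psi_{jY}$ required by Proposition \ref{Mp1}, using (\ref{Ma8a}) for the uniform slope bound, and shrinking $c$ so the disc sits inside the region where Proposition \ref{Mp2} applies. The constant-tracking and the geometric compatibility check you single out are the right (and only) points needing care.
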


\begin{corollary} Theorem \ref{TA30} together with the theorem \ref{Mp3} inplies that $\xi_*\in C^{2,\gamma'}(I_c)$ for certain $\gamma'\in (0,1)$, where $I_c$ is any closed interval which does not contain stagnation points inside.
\end{corollary}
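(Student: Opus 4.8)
The plan is to obtain the regularity of $\xi_*$ by upgrading the convergence $\xi_j\to\xi_*$, which is so far known only in $C^{0,\alpha}_{pe}(\R)$, to convergence in $C^{2,\gamma'}(I_c)$, and then to read off $\xi_*\in C^{2,\gamma'}(I_c)$ as the limit of the sequence. By periodicity and evenness of $\xi_*$ it is enough to consider one period with the crest at the origin, so I would take $I_c\subset(0,\Lambda_*)$ and put $\delta_1=\mathrm{dist}(I_c,\Lambda_*\mathbb{Z})>0$; thus every $X_*\in I_c$ lies at distance at least $\delta_1$ from all stagnation (crest) points.

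The first step is a bound $\|\xi_j\|_{C^{2,\widehat\gamma}(I_c)}\le C$ with $C$ and $\widehat\gamma\in(0,1]$ independent of $j$. Fix $X_*\in I_c$. Since $\xi_*$ is monotone on each half-period and $R-\xi_*(X)\sim X/\sqrt3$ near a crest by the $120^\circ$-corner expansion (\ref{A30aa}) of Theorem \ref{TA30}, one has $\xi_*(X_*)\le R-\kappa$ for some $\kappa=\kappa(\delta_1)>0$. By the $C^{0,\alpha}$-convergence, $\xi_j\le R-\kappa/2$ on a fixed neighbourhood of $X_*$ for all large $j$, and then the Bernoulli relation (\ref{M21bc}) gives $\Psi_{jY}(X,\xi_j(X))^2\ge \kappa/(1+{\mathcal B}_*^2)$ on the corresponding boundary arc. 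A maximum-principle/barrier argument of the kind carried out in the proof of Proposition \ref{Mp2} propagates this into an interior lower bound $\Psi_{jY}\ge c(\delta_1)>0$ on a ball $B$ of radius bounded below about $(X_*,\xi_j(X_*))$, and Proposition \ref{Mp1}---or Proposition \ref{Mp3} for those $X_*$ close to a crest, where one uses $\delta=X_*$---then yields $\|\xi_j\|_{C^{2,\widehat\gamma}}\le C$ on the projection of $\tfrac12 B$, with $C$ depending only on $R$, $\omega_1$, ${\mathcal B}_*$ and $\delta_1$. Covering the compact set $I_c$ by finitely many such balls and patching the local estimates produces the asserted uniform bound on $I_c$.

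The second step is routine compactness. Being bounded in $C^{2,\widehat\gamma}(I_c)$, the sequence $\{\xi_j\}$ is precompact in $C^{2,\gamma'}(I_c)$ for every $\gamma'\in(0,\widehat\gamma)$ by the compactness of the embedding $C^{2,\widehat\gamma}(I_c)\hookrightarrow C^{2,\gamma'}(I_c)$; since $\xi_j\to\xi_*$ already in $C^{0,\alpha}_{pe}(\R)$, the only accumulation point is $\xi_*$, hence $\xi_j\to\xi_*$ in $C^{2,\gamma'}(I_c)$ and in particular $\xi_*\in C^{2,\gamma'}(I_c)$. This also makes precise the regularity recorded in (\ref{M26aa}).

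I expect the only genuine difficulty to be already absorbed into the input results: obtaining Schauder estimates uniform in $j$ despite $\Psi_{jY}$ degenerating to $0$ as one approaches the crest $(0,R)$ is exactly what Propositions \ref{Mp1}--\ref{Mp3} provide, and it is there that the sharp corner asymptotics (\ref{A30aa}) are essential, so that $\Psi_{jY}$ is bounded below by a quantity proportional to the distance to the crest rather than by something decaying faster. For a fixed $I_c$ bounded away from the crests this degeneration is absent, and once those propositions are granted the corollary follows from the two short steps above.
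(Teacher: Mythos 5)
Your argument is correct and is essentially the paper's intended one: the paper proves this corollary implicitly by combining the uniform local $C^{2,\widehat\gamma}$ bounds of Propositions \ref{Mp1}--\ref{Mp3} (which rest on the lower bound for $\Psi_{jY}$ away from the crest, itself derived from the corner asymptotics) with a covering of $I_c$ and the compactness of $C^{2,\widehat\gamma}\hookrightarrow C^{2,\gamma'}$, exactly as you do; this is also what underlies the convergence statements (\ref{M29aa}) and (\ref{A2aa}). The only cosmetic remark is that the lower bound $R-\xi_*(X)\gtrsim X$ near the crest already follows from the standing assumption (\ref{M26a}), so the full strength of Theorem \ref{TA30} is not needed for that particular step.
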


\subsection{On convergence of the branch of Stokes waves (\ref{J4ac})}\label{SectA8}

Due to Proposition \ref{Mp3} and the assumption (\ref{M26a}), we conclude that
\begin{equation}\label{M29aa}
||\Psi_j||_{C^{2,\widehat{\gamma}}({\mathcal D}_{\xi_j}\cap\frac{1}{2}B)}\leq C,\;\;||\xi_j||_{C^{2,\widehat{\gamma}}(I)}\leq C,\;\;j=1,\ldots,
\end{equation}
for every disc $B=B_{c\delta} (X,\xi_j(X))$, where $c$ is independent of $j$ and $\delta$  and $C$ depending only on $\delta$, $R$ and $\omega_1$.

For $\delta>0$ we introduce
$$
\Omega_{\xi_j}^\delta=\{(X,Y)\,:\,X\in [-\Lambda_j/2,\Lambda_j/2],\,Y\in [0,\min(\xi_j(X),R-c\delta)]\}%\setminus B_\delta(0,R)
$$
and
\begin{equation}\label{Ma10a}
I^\delta_j=\{X\,:\,X\in [-\Lambda_j/2,-\delta]\cup [\delta,\Lambda_j/2]\}.
\end{equation}

In the next proposition we present an important additional information on convergence of the sequence $(\Psi_j,\xi_j,\Lambda_j)$.
\begin{proposition}\label{PA30} Let  $\delta$ be a positive number.% There exist positive constants $c_1$ and $c_2$ such that if
Then
\begin{equation}\label{Ma7aa}
||\Psi_j||_{C^{2,\widehat{\gamma}}(\Omega_{\xi_j}^\delta)}\leq C\;\;\mbox{and}\;\;||\xi_j||_{C^{2,\widehat{\gamma}}(I^\delta_j)}\leq C,
\end{equation}
where the constant $C$ depends on $\delta$, $R$, $\omega_1$ and ${\mathcal B}_*$.
\end{proposition}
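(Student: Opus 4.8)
The plan is to deduce Proposition \ref{PA30} from the already-established local estimate \eqref{M29aa} (that is, from Proposition \ref{Mp3}) by a covering argument, being careful about the uniformity of the constants in $j$ and about the region near the stagnation point. First I would fix $\delta>0$ and observe that on the set $\Omega_{\xi_j}^\delta$ we stay a definite distance $c\delta$ away from the crest level $Y=R$, so the Bernoulli relation \eqref{M21bc} together with the bound ${\mathcal B}_j\leq{\mathcal B}_*$ gives a lower bound $\Psi_{jY}\ge c'\delta$ on the portion of the free surface lying in $\overline{\Omega_{\xi_j}^\delta}$; combined with Proposition \ref{Mp2}(more precisely its interior propagation argument, or Proposition \ref{Mp3}) this yields $\Psi_{jY}\ge c''\delta$ in a fixed neighbourhood of that part of the surface. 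Thus every point $X\in I_j^\delta$ (respectively every surface point over $I_j^\delta$) is the centre of a disc $B=B_{c\delta}(X,\xi_j(X))$ on which the hypotheses of Proposition \ref{Mp1} hold with parameters ($R$, $\delta$, $\omega_1$, $\rho=c\delta$, and $C_B\le \lambda_j{\mathcal B}\le{\mathcal B}_*$) that are independent of $j$ and of the particular point.

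Next I would run the covering argument. The set $\Omega_{\xi_j}^\delta$ is contained in the union of the half-discs $\tfrac12 B$ over a finite family of centres that can be chosen with a multiplicity bounded independently of $j$ (the width $\Lambda_j$ stays in a bounded interval by \eqref{M29a}, and the discs have radius comparable to $\delta$, so a Vitali-type selection gives $O(1/\delta)$ of them, uniformly in $j$). Applying \eqref{M25a} on each $\tfrac12 B$ and taking the maximum over the finite cover produces the first bound in \eqref{Ma7aa} on $\Omega_{\xi_j}^\delta$; the analogous statement for $\xi_j$ on $I_j^\delta$ follows from the second half of \eqref{M25a} since $I_j^\delta$ is covered by the projections $I$ of the same half-discs. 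One should also treat separately, if necessary, the ``deep'' region $Y\le R/2$, say, where $\Psi_{jY}$ is bounded below uniformly by \cite{KLN17}-type arguments independently of proximity to the crest; there the same local estimate applies with even better constants. Patching the Hölder norms over a finite cover is harmless because $C^{2,\widehat\gamma}$ norms are local and the overlap multiplicity is bounded.

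The one genuine subtlety — and the step I expect to be the main obstacle — is the uniformity of the lower bound on $\Psi_{jY}$ near the surface as one approaches, but does not reach, the stagnation column $X=0$. Away from $X=0$ this is Proposition \ref{Mp2}, but its constants $c_k$ and index $j(\delta)$ were stated for $X$ near a single $X_*=\delta$; here I need the bound on the whole of $I_j^\delta$, i.e. for all $X$ with $|X|\ge\delta$. This is obtained by covering $I_j^\delta$ by finitely many intervals of the form $|X-X_*|\le c_1\delta$ with $X_*\ge\delta$ (using periodicity and evenness to reduce to $[\delta,\Lambda_j/2]$, which has length bounded uniformly in $j$), applying Proposition \ref{Mp2} on each, and taking the worst constant; since there are $O(1/\delta)$ such intervals and the constants there are independent of $j$ and of $\delta$, the resulting lower bound $\Psi_{jY}\ge c\delta$ near the surface over $I_j^\delta$ holds with a constant depending only on $\delta$. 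Feeding this into Proposition \ref{Mp1} as described completes the proof. I would write the argument in exactly this order: (1) surface lower bound on $\Psi_{jY}$ over $I_j^\delta$ from Bernoulli $+$ Proposition \ref{Mp2} $+$ covering; (2) interior propagation to a neighbourhood of the surface via Proposition \ref{Mp3}; (3) finite covering of $\Omega_{\xi_j}^\delta$ and $I_j^\delta$ by discs on which Proposition \ref{Mp1} applies with uniform parameters; (4) take maxima, invoking bounded overlap and the boundedness of $\Lambda_j$, to get \eqref{Ma7aa}.
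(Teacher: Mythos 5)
Your proof follows essentially the same route as the paper's: local Schauder estimates (Proposition \ref{Mp1}) whose hypothesis on $\Psi_{jY}$ is verified via the Bernoulli relation and Proposition \ref{Mp2}, then patched together over a finite cover of $\Omega_{\xi_j}^\delta$ and $I_j^\delta$ by discs of radius comparable to $\delta$ centred at distance $\geq\delta$ from the stagnation point. The paper's version is much terser; your extra care about the covering multiplicity, the boundedness of $\Lambda_j$, and the uniformity of the lower bound on $\Psi_{jY}$ over all of $I_j^\delta$ only fills in details the paper leaves implicit.
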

\begin{proof} Let $B$ be a disc with the center located at a distance $\geq \delta$ from the stagnation point $(0,R)$ and with a radius $c\delta$ with a certain constant $c$ independent of $j$ and $\delta$. Then using the local estimates %(\ref{M29aa}) for free boundary points and the
(\ref{M25a}) and verifying conditions of Proposition \ref{Mp1} by using Proposition \ref{Mp2} we obtain estimates (\ref{M25a}) for functions $\Psi_j$ and $\xi_j$ with constant $C$ depending on $\delta$, $R$, $\omega_1$ and ${\mathcal B}_*$. These local estimates leads to the  estimate
(\ref{Ma7aa}).

\end{proof}

By choosing a subsequence of the indexes $\{j\}$ (and numerating it by the same indexes) we can assume that for every $\delta>0$
\begin{equation}\label{A2aa}
\xi_j\to\xi_*\;\;\mbox{in $C^{2,\alpha}(\overline{I^\delta_*})$}
\end{equation}
and
\begin{equation}\label{A2ab}
\rho_j\to\rho_*\;\;\mbox{in $C^{0,\alpha}(\overline{I^\delta_*})$}\;\;\mbox{as $j\to\infty$},
\end{equation}
where $I^\delta_*$ is defined by (\ref{Ma10a}) where $\Lambda_j$ is replaced by $\Lambda_*$-the period of $\xi_*$.
Furthermore $\alpha\in (0,\widehat{\gamma})$, $\rho_j(X)=\rho(X;t_j)$  and $\rho=\rho(x;t)$ is defined by (\ref{Sept17aa}) with $\lambda=1$, i.e.
\begin{equation}\label{A27b}
\rho=
\rho(X;t)=\frac{(1+\Psi_X\Psi_{XY}+\Psi_Y\Psi_{YY})}{\Psi_Y(\Psi_X^2+\Psi_Y^2)^{1/2}}\Big|_{Y=\xi(X;t)}.
\end{equation}

\begin{remark}
As it was shown in \cite{KL2} the following formula is valid for $\rho_*$:
\begin{equation}\label{A28b}
\rho_*(X)=r^{-1}\frac{1}{\sqrt{3}}+O(r^{-1+\varepsilon}),\;\;\mbox{as $r\to 0$},
\end{equation}
where $r^2=(R-X)^2+Y^2$.
\end{remark}

\subsection{Proof of Theorem \ref{ThMa} on negative eigenvalues}\label{SA21a}

\begin{proof}   Let $a_j(u,w)=a_{\xi_j}(u,w)$.
%corresponds to the spectral problem
%\begin{eqnarray}\label{J17az}
%&&(\partial_X^2+\partial_Y^2)w+\omega'(\Psi_j)w +\theta\frac{1}{\Psi_{jY}}w=0\;\;\mbox{in ${\mathcal D}_{\xi_j}$},\nonumber\\
%&&(\nu_xw_x+\nu_yw_y-\rho_j w)_{p=1}=0\;\;\mbox{on ${\mathcal B}_{\xi_j}$},\nonumber\\
%&&w=0\;\;\mbox{for $X=0$},
%\end{eqnarray}
%it is sufficient to estimate the largest dimension of finite dimensional subspaces, where the form is negative.

Let also $(r,\theta)$ be the polar coordinates with center at $(0,R)$, i.e. $r=\sqrt{(R-X)^2+Y^2}$ and $\theta $ is the angle measured from the ray $(X,R)$, $X>0$.
We introduce $\widehat{\xi}(X)=R-|R-X|/\sqrt{3}$. We will compare the negative eigenvalues of the problem (\ref{J17az}) with the negative spectrum of the problem
\begin{eqnarray}\label{A28a}
&&-\Delta u=-\tau^2 u\;\;\mbox{in ${\mathcal D}_{\widehat{\xi}}$}\nonumber\\
&& \partial_\theta u-\frac{\sqrt{3}}{2} u=0\;\;\mbox{for $Y=\widehat{\xi}(X)$}
\end{eqnarray}
considered for even functions and supplied with a condition
$$
u=\sin\Big(\kappa\log(\frac{1}{2}r)+\gamma\Big)\cosh(\kappa\theta)+O(r^\epsilon)\;\;\mbox{near zero},
$$
where $\gamma\in (0,\pi]$ is a fixed number and $\epsilon$ is a small positive number. Here $\kappa$ is the positive root of
\begin{equation}\label{K49}
\kappa\tanh\Big(\frac{\kappa\pi}{3}\Big)=\frac{\sqrt{3}}{2}
\end{equation}
Let $K_{i\kappa}(z)$ be Bessel's function of imaginary order, see \cite{Dan}. According to \cite{KL2} the spectral problem (\ref{A28a})
has infinitely many negative eigenvalues $\lambda=-\tau_k^2$,
$$
\tau_k=2e^{(\gamma_\kappa+\gamma)/\kappa}e^{k\pi/\kappa},\;\;j\in{\Bbb Z}.
$$
Corresponding eigenfunction is
$$
u_k(X,Y)=K_{i\kappa}(\tau_kr)\cos(\kappa\theta).
$$

 We recall that
 $$
K_{i\kappa}(z)=\Big(\frac{\pi}{2z}\Big)^{1/2}e^{-z}\Big(1+O\Big(\frac{1}{z}\Big)\Big),\;\;
$$
for large $z$ and
\begin{equation}\label{F23a}
K_{i\kappa}(z)=-\Big(\frac{\pi}{\kappa\sinh(\pi\kappa)}\Big)^{1/2}
\sin\big(\kappa\ln\big(\frac{1}{2}z\big)-\gamma_\kappa\big)+O(z^2),
\end{equation}
for small $z$.
Here  $\gamma_\kappa$ is a real constant defined by
\begin{equation}\label{J25a}
\Gamma(1+i\kappa)=\Big(\frac{\pi\kappa}{\sinh(\pi\kappa)}\Big)^{1/2}e^{i\gamma_\kappa}.
\end{equation}
We need  the following asymptotic formula for small roots of $K_{i\kappa}(z)=0$:
\begin{equation}\label{A2a}
z_j=2e^{-(j\pi-\gamma_\kappa)/\kappa}(1+O(e^{-2(j\pi-\gamma_\kappa)/\kappa}))\;\;\mbox{as $j\to\infty$}.
\end{equation}
We note also that the function $K_{i\kappa}(z)$ is bounded and
$$
\int_0^\infty |K_{i\kappa}(\tau r)|^2rdr=c\tau^{-2}.
$$

For small $\varepsilon>0$ we define the function
$$
u_{k,\varepsilon}(x,y)=K_{i\kappa}(\tau_kr)\cos(\kappa\theta) \;\;\mbox{for $r>z_n/\tau_k$}.
$$
where $z_n$ is the smallest root of $K_{i\kappa}(z)=0$ such that $z_n\tau_k^{-1}>\varepsilon$. If $r<z_n/\tau_j$ then $u_{k,\varepsilon}(X,Y)=0$.
We choose small parameters
 $$
 \epsilon_1<\epsilon<\sigma<\delta.
 $$

According to (\ref{A2aa}) and (\ref{A2ab}) for every $\varepsilon_1>0$ we have
\begin{equation}\label{A2b}
|\xi_j(X)-\xi_*(X)|\leq \varepsilon_1\;\;\;\mbox{for $X\in (\varepsilon,\delta)$}
\end{equation}
and
\begin{equation}\label{A2ba}
|\rho_j(X)-\rho_*(X)|\leq \varepsilon_1\;\;\;\mbox{for $X\in (\varepsilon,\delta)$}
\end{equation}
for $j\geq j(\varepsilon_1)$, where $j(\varepsilon_1)$ is sufficiently large.

We introduce also a smooth cut-off function $\zeta=\zeta(r)$, $\zeta(r)=1$ for $r\leq 1$ and $\zeta(r)=0$ for $r\geq 2$ and put
$$
\zeta_\delta(r)=\zeta(r/\delta).
$$

To verifies that there are many negative eigenvalues of the problem (\ref{J17a}), we will use the finite dimensional space ${\mathcal X}$ of test functions
\begin{equation}\label{M31aa}
w(X,Y)=\zeta_\delta(r)\sum a_ku_{k,\varepsilon}(X,Y).
\end{equation}
We assume in what follows that $k$ is chosen to satisfy
\begin{equation}\label{M31a}
\sigma^{-1}\leq\tau_k\leq \sigma\varepsilon^{-1},
\end{equation}
where $\sigma$ is a small number. %Denote $\tau_{min}=\min\tau_k$, where $\min$ is taken over all $\tau_k$ satisfying (\ref{M31a}).
We use the norm
$$
||w||_{\mathcal X}:=|{\bf a}|=\Big(\sum |a_k|^2\Big)^{1/2}
$$
in the space ${\mathcal X}$. Here ${\bf a}$ denotes the vector of coefficient $a_k$. %Clearly
$$
%|w(x,y)|\leq CN^{1/2}\, |{\bf a}|,\;\;N=\dim X.
$$
One can verify that $N:=\dim {\mathcal X}\approx \log \varepsilon^{-1}$.

 We represent the form (\ref{J21a}) as
$$
a_j(w,w)=a(w,w)+b_j(w,w)
$$
where
\begin{equation}\label{J21ad}
a(w,w)=\int_{\Omega_{\xi_*}} |\nabla w|^2dXdY-\int_{-\Lambda_0/2}^{\Lambda_0/2} \rho_*|w|^2\sqrt{1+(\eta_*')^2}dX
\end{equation}
and
\begin{eqnarray*}
&&b_j(w,w)=\int_{\Omega_{\eta_j}\setminus \Omega_{\xi_*}}|\nabla w|^2dXdY-\int_{D_{\xi_j}}\omega'(\Psi_j)|w|^2dXdY\\
&&+\int_{-\Lambda/2}^{\Lambda/2}(\rho_*|w(X,\xi_*(X)|^2\sqrt{1+\xi_*'(x)^2}-\rho_j|w(X,\xi_j(X)|^2\sqrt{1+\eta_j'(x)^2})dX
\end{eqnarray*}

Using (\ref{A2b}) and (\ref{A2ba}), we get
$$
|b_j(w,w)|\leq cN|{\bf a}|^2\Big(\sigma^2+\frac{\varepsilon_1}{\varepsilon}+\varepsilon_1\log\frac{\delta}{\varepsilon}\Big),
$$
where $|{\bf a}|$ is the euclidian norm of the vector ${\bf a}$ whose components are the coefficients in (\ref{M31aa}).
Next, we represent the form $a$ as
$$
a(w,w)=\widehat{a}(w,w)+\widehat{b}(w,w),
$$
where
\begin{equation}\label{M31b}
\widehat{a}(w,w)=\int_{\Omega_{\widehat{\xi}}} |\nabla w|^2dXdY
-\int_{-\Lambda_0/2}^{\Lambda_0/2}\widehat{\rho}|w|^2\sqrt{1+(\widehat{\xi}')^2}dX
\end{equation}
and
\begin{eqnarray}\label{M31ba}
&&\widehat{b}(w,w)=\int_{\Omega_{\xi_*}\setminus \Omega_{\widehat{\xi}}}|\nabla w|^2dXdY\nonumber\\
&&+\int_{-\Lambda_0/2}^{\Lambda_0/2}(\widehat{\rho}|w(X,\widehat{\xi}(X)|^2\sqrt{1+\widehat{\xi}'(X)^2}-\rho_*|w(X,\xi_*(X)|^2\sqrt{1+\xi_*'(X)^2})dX
\end{eqnarray}
where, according to \cite{KL2},
$$
\widehat{\rho}=\Big(\frac{\sqrt{3}}{2}+O(X^{1/2})\Big)r^{-1}.
$$
This implies, in particular (see also \cite{KL2})
$$
\rho_*-\widehat{\rho}=O(X^{-1/2}).
$$
Therefore
$$
|\widehat{b}(w,w)|\leq C|a|^2N\sigma^{1/2}.
$$
Next, we introduce
\begin{equation}\label{M31aaa}
W_\varepsilon (X,Y)=\sum a_ku_{k\varepsilon}(X,Y)
\end{equation}
and put
\begin{equation*}
\widetilde{a}(W_\varepsilon,W_\varepsilon )=\int_{\Omega_{\widehat{\xi}}} |\nabla W_\varepsilon|^2dXdY
-\int_{-\Lambda_0/2}^{\Lambda_0/2}\widehat{\rho}|W_\varepsilon|^2\sqrt{1+(\widehat{\xi}')^2}dX.
\end{equation*}
Then
\begin{equation*}
\widehat{a}(w,w)=\widetilde{a}(W_\varepsilon,W_\varepsilon)+\widetilde{b}(w,W_\varepsilon),
\end{equation*}
where
$$
\widetilde{b}(w,W_\varepsilon)=\widehat{a}(w,w)-\widetilde{a}(W_\varepsilon,W_\varepsilon).
$$
Then
$$
|\widetilde{b}(w,W_\varepsilon)|\leq C|a|^2N\Big(e^{-\delta/\sigma}+\sigma\Big)
$$
Furthermore

$$
a(W_\varepsilon,W_\varepsilon)=-c_0|a|^2(1+O(N\sigma^2)),\;\;c_0=\int_0^\infty K_{i\kappa}(z)^2zdz.
$$
Choosing $\varepsilon_1$ sufficiently small and
$$
\sigma=\varepsilon^{1/4},\;\;\mbox{and}\;\;\delta=\epsilon^{1/8}\;\;\mbox{and}\;\;N=\log\varepsilon^{-1/2},
$$
we arrive at
\begin{equation}\label{A29b}
a_j(w,w)\geq -c_0|a|^2/2
\end{equation}
if $\varepsilon$ is sufficiently small. This proves the required result.
\end{proof}

\begin{remark}\label{R28} We note that the test function $w$, used for estimating of negative spectrum, has support in a $2\sigma$ neighborhood of the crest $(0,R)$ of the extreme Stokes wave and so it is zero below the trough.

\end{remark}

\begin{corollary}\label{SOkt11a} Due to Proposition \ref{Pm23} and Corollaries \ref{CorM1}, \ref{CorM2} all spectral problems (\ref{J17a}), (\ref{J17aaa}), (\ref{F20a}) and (\ref{F20aa}) have the same number of negative eigenvalues. Moreover, one can see directly that they have the same multiplicity of zero eigenvalue.

\end{corollary}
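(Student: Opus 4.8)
The plan is to reduce the statement to the three identifications already established, so that what remains is bookkeeping. First I would record that the substitution $F(q,p)=\Gamma(q,h(q,p))\,h_p(q,p)$ of \eqref{J17aa}, composed with the hodograph change of variables $(q,p)=(x,\psi(x,y))$, is a bounded linear isomorphism, with bounded inverse $\Gamma(x,y)=F(x,\psi(x,y))\,\psi_y(x,y)$, of the energy spaces in play: along the branch $\psi_y>0$ on $\overline{D_\eta}$, so $h_p=1/\psi_y$ is bounded above and bounded away from zero, and $h\in C^{2,\gamma}_{pe}(\overline{Q})$, so both multiplication by $h_p$ and the $C^{2,\gamma}$-diffeomorphism $Q\leftrightarrow D_\eta$ preserve $H^1_{pe}$ and $C^{2,\alpha}_{pe}$. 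By Corollary \ref{CorM1} this isomorphism carries the solutions of \eqref{J17a} with spectral parameter $\theta$ onto the solutions of \eqref{J18ba}, i.e. of \eqref{F20aa}, with the same $\theta$; by Corollary \ref{CorM2} it carries the solutions of \eqref{J17aaa} with parameter $\mu$ onto those of \eqref{J18baa}, i.e. of \eqref{F20a}, with the same $\mu$. Since \eqref{J17b} and \eqref{J18a} express $AF$ and $NF-F$ in terms of $\Gamma$ with no loss of information, these correspondences are bijections between the corresponding eigenspaces, preserving eigenvalues and their multiplicities. Hence \eqref{J17a} and \eqref{F20aa} have the same (finite) number of negative eigenvalues, and likewise \eqref{J17aaa} and \eqref{F20a}; together with Proposition \ref{Pm23}, which gives the same count for \eqref{F20a} and \eqref{F20aa}, chaining the three equalities yields the first assertion. (This part is just Corollary \ref{Pm23a} reread in the present context.)

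For the claim about the zero eigenvalue I would argue directly, as the corollary anticipates. Putting $\theta=0$ in \eqref{F20aa} and $\mu=0$ in \eqref{F20a} produces one and the same boundary value problem, namely $A(t)u=0$ in $Q$, $N(t)u-u=0$ on $p=1$, $u=0$ on $p=0$; therefore the kernels of \eqref{F20a} and \eqref{F20aa}, hence the multiplicities of their zero eigenvalue, coincide. The isomorphism $\Gamma\mapsto\Gamma h_p$, which sends the value $\theta=0$ to $\theta=0$ and $\mu=0$ to $\mu=0$, then transports this common kernel onto the zero-eigenspaces of \eqref{J17a} and \eqref{J17aaa}, so all four problems share the same multiplicity of the zero eigenvalue.

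Since all the ingredients --- Corollaries \ref{CorM1}, \ref{CorM2} and Proposition \ref{Pm23} --- are already available, there is essentially no obstacle; the only point that deserves a line of care is the bijectivity of $F=\Gamma h_p$ at the level of the energy spaces and the exact one-to-one matching of spectral parameters and multiplicities under it, and this is immediate once one notes that $h_p$ is bounded away from zero and that formulas \eqref{J17b} and \eqref{J18a} recover $\Gamma$ losslessly. One may also observe, in the spirit of Remark \ref{RA9}, that restricting throughout to even functions is harmless for these counts: the obvious zero-mode $h_q$ of \eqref{F20aa} is odd in $q$ and hence is excluded from the spaces used, so the multiplicities obtained above are precisely the ones relevant to the subsequent analysis of subharmonic bifurcations.
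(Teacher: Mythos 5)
Your proposal is correct and follows essentially the same route as the paper, which treats this corollary as an immediate consequence of Proposition \ref{Pm23} combined with the eigenvalue-preserving correspondence $F=\Gamma h_p$ from Corollaries \ref{CorM1}, \ref{CorM2} (i.e.\ Corollary \ref{Pm23a}), plus the direct observation that at $\theta=0$ and $\mu=0$ the problems (\ref{F20a}) and (\ref{F20aa}) degenerate to the same homogeneous problem, so their kernels coincide. Your extra care about the invertibility of the substitution and the parity of $h_q$ is consistent with the paper's setup and adds nothing problematic.
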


Our study of bifurcations will be based on the  equation (\ref{F19b}), where ${\mathcal S}$ is given by (\ref{F19ab}).
Using Proposition \ref{PrA8}, we conclude that
\begin{equation}\label{M5a}
{\mathcal S}(t)\,:\, C^{2,\gamma}_{pe}(\Bbb R)\cap {\mathcal U}\rightarrow C^{1,\gamma}_{pe}(\Bbb R),
\end{equation}
where ${\mathcal U}$ is a neighborhood of origin in $C^{2,\gamma}_{pe}(\Bbb R)$.
This branch consists of Fredholm potential operators of index zero analytically depending on the parameter $t\in [0,\infty)$ (we write here and in what follows "analytic" having in mind it has an real analytic re-parametrization near every point $t\geq 0$).  The operator (\ref{M5a}) has potential
\begin{equation}\label{A5a}
{\mathcal G}(q;h+w,\lambda)=f(h+w(g);\lambda)-f(h;\lambda),
\end{equation}
where $w=w(g)$ is the solution of the problem (\ref{F19aa}).
Consider the equation (\ref{F19b}):
\begin{equation}\label{A5aa}
{\mathcal S}(g;t)=0.
\end{equation}

Important role in analysis of bifurcations is played by the Frechet derivative, which is given by the operator (see (\ref{M3a})):
$$
Sg=(Nw-w)|_{p=1},
$$
where $w$ solves the problem (\ref{F19bb}) and $N$ is defined by (\ref{J4ab}). Clearly
%The corresponding eigenvalue problem is given by (\ref{M3b}).
\begin{equation}\label{M5aa}
S(t)\,:\,C_{pe}^{2,\gamma}(\Bbb R)\rightarrow  C_{pe}^{1,\gamma}(\Bbb R).
\end{equation}

% Let also, as before,
%\begin{equation}\label{M5aa}
%S(t)\,:\,C_{pe}^{2,\gamma}(\Bbb R)\rightarrow  C_{pe}^{1,\gamma}(\Bbb R),
%\end{equation}
%be the Frechet derivative given by (\ref{M3a}) and (\ref{F19bb}).

As is known every point $t$ where
 the operator $S(t)$ has non-trivial kernel is isolated.
 The following result  directly follows from Theorem \ref{ThMa} and Corollary \ref{SOkt11a}

\begin{corollary}\label{TA5z} There are infinitively many  points $\{T_j\}_{j=1}^\infty$ such that $T_j\to\infty$, as $j\to\infty$, the
kernel of $S(T_j)$ is nontrivial and the crossing number of the family $S(t)$ through 0 at $T_j$ is non-zero.

\end{corollary}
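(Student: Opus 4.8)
The plan is to run a Morse-index counting argument along the analytic branch $t\mapsto S(t)$. Recall from the excerpt that $S(t)\colon C^{2,\gamma}_{pe}(\mathbb{R})\to C^{1,\gamma}_{pe}(\mathbb{R})$ is the Frechet derivative of the potential operator $\mathcal{S}(t)$, hence self-adjoint with respect to the underlying variational structure; the family is analytic in $t$, consists of Fredholm operators of index zero, the spectrum of the associated problem (\ref{M3b}) is discrete, real, bounded below with $+\infty$ as the only accumulation point, and each $t$ with $\ker S(t)\neq\{0\}$ is isolated. Denote by $n(t)$ the number of negative eigenvalues of $S(t)$. By analyticity the eigenvalues near $0$ organise into analytic branches, only finitely many of which vanish at any given singular point, so the one-sided limits $n(t^\pm)$ exist everywhere, agree with $n$ off the discrete singular set, and the crossing number of $S$ at a singular point $t_0$ is, up to the sign convention, the jump $n(t_0^+)-n(t_0^-)$; in particular it vanishes precisely when $n$ does not jump at $t_0$.

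First I would transfer Theorem \ref{ThMa} to $S$. The wave $(\Psi_j,\xi_j,\Lambda_j)$ of Sect. \ref{S24a} is the member of the branch at $t=t_j$, and the rescaling $x=\lambda_jX$ of Sect. \ref{SOkt10a} identifies the spectral problem (\ref{K2az}) for $(\Psi_j,\xi_j,\Lambda_j)$ with the problem (\ref{J17a}) at $t=t_j$ without changing the eigenvalue $\theta$, so these two problems have the same number of negative eigenvalues; by Corollary \ref{SOkt11a} this number equals that of (\ref{F20a}), i.e. of $S(t_j)$. Hence $n(t_j)=N_j\ge\dim\mathcal{X}_j$, and by parts (iii)--(iv) of Theorem \ref{ThMa} we get $n(t_j)\to\infty$ as $j\to\infty$. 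Since the eigenvalues of $S(t)$ depend continuously on $t$, an eigenvalue which is strictly negative at $t_j$ stays strictly negative nearby, so in fact $n(t)\ge N_j$ for every $t$ in a suitable punctured neighbourhood of $t_j$.

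Now suppose, for contradiction, that only finitely many singular points carried a nonzero crossing number, all contained in $[0,T_0]$. Then at every singular point $t_0>T_0$ one has $n(t_0^-)=n(t_0^+)$, and since $n$ is locally constant away from singular points it is constant on $(T_0,\infty)$, equal to some finite value $n_\infty$. Choose $j$ so large that $t_j>T_0$ and pick a non-singular $t>T_0$ close to $t_j$; then $n_\infty=n(t)\ge N_j$, contradicting $N_j\to\infty$. Therefore infinitely many singular points have nonzero crossing number. Each such point is isolated, so they cannot accumulate at a finite value; enumerating them as $\{T_j\}_{j=1}^\infty$ we obtain $T_j\to\infty$, with $\ker S(T_j)\neq\{0\}$ and nonzero crossing number at each $T_j$, which is the assertion.

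The substantive inputs are Theorem \ref{ThMa} and the identification of the spectral problems in Corollary \ref{SOkt11a}, both already available; the remaining argument is the elementary observation that an integer-valued Morse index which is locally constant off a discrete set, finite at $t=0$, and unbounded along a sequence $t_j\to\infty$ must jump infinitely often. The one point demanding a little care is the bookkeeping at the $t_j$ themselves, since these may happen to be singular points; this is handled by working with the strictly negative part of the spectrum, which is stable under small variations of $t$.
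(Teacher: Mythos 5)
Your argument is correct and is exactly the one the paper intends: the corollary is stated as a direct consequence of Theorem \ref{ThMa} and Corollary \ref{SOkt11a}, and your write-up simply makes explicit the standard step that an integer-valued Morse index, finite for each $t$, locally constant off the discrete singular set, and unbounded along $t_j\to\infty$, must jump (hence produce a nonzero crossing number) at infinitely many singular points tending to infinity. Your care about the $t_j$ possibly being singular themselves, handled via persistence of the strictly negative spectrum, is a sensible addition but does not change the route.
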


\section{Subharmonic bifurcations}

Unfortunately Corollary \ref{TA5z} does not guarantees that the bifurcation points $T_j$ generates subharmonic bifurcations. They can generate bifurcations  which are described by Stokes waves of the same period. In this section we formulate an equation for finding subharmonic bifurcation
and present some important properties for the Frechet derivative of operators involved in the equation.

\subsection{Equation for subharmonic bifurcations}\label{SA22a}

For  $M=2M_1+1$, $M_1=1,2,\ldots$, and $\alpha\in (0,1]$ let us introduce  the subspaces $C_{Me}^{k,\alpha}(\overline{Q})$ and  $C_{Me}^{k,\alpha}(\Bbb R)$ of $C^{k,\alpha}(\overline{Q})$ and  $C^{k,\alpha}(\Bbb R)$ respectively consisting of even functions of period $M\Lambda_0$. A similar space for the domain ${\mathcal D}_\xi$ we denote by $C_{Me}^{2,\alpha}(\overline{{\mathcal D}_\xi})$.

Equation (\ref{F19b}) can be considered also on functions of period $M\Lambda_0$. Having this in mind we write as before $h+w$ instead of $h$ but now we assume that $w\in C^{2,\gamma}_{Me}(\overline{Q})$. Now the operator ${\mathcal F}(w;h,t)$, ${\mathcal G}(w;h,t)$, $A(t)$ and $N(t)$ from Sect.\ref{SA23a} are considered on functions from $C^{2,\gamma}_{Me}(\overline{Q})$. To indicate this difference we will use the notations
${\mathcal F}_M(w;h,t)$, ${\mathcal G}_M(w;h,t)$, $A_M(t)$ and $N_M(t)$ for corresponding operators. Moreover we can define analogs of operators ${\mathcal S}$ and $S$ and denote them by ${\mathcal S}_M$ and $S_M$ correspondently. New operators are acting on functions from $C^{1,\gamma}_{Me}(\Bbb R)$. The equation for subharmonic bifurcations is
\begin{equation}\label{A23a}
{\mathcal S}_M(g;t)=0,
\end{equation}
where
\begin{equation}\label{M5b}
{\mathcal S}_M(g;t)\,:\, C_{Me}^{2,\gamma}(\Bbb R)\rightarrow  C_{Me}^{1,\gamma}(\Bbb R).
\end{equation}
The operator ${\mathcal S}_M$ is also potential and for the definition of the potential the integration in (\ref{M2a}) must be taken over the interval $(-M\Lambda_0/2,M\Lambda_0/2)$.
The corresponding eigenvalue problem for the Frechet derivative $S_M(t)$ is
\begin{equation}\label{A23aa}
S_M(t)g=-\mu g,
\end{equation}
where $S_M$ is defined by the same formulas as $S$ but now the functions $g$ and $w$ are $M\Lambda_0$-periodic. Clearly,
\begin{equation}\label{M5ba}
S_M(t)\,:\,C_{Me}^{2,\gamma}(\Bbb R)\rightarrow  C_{Me}^{1,\gamma}(\Bbb R).
\end{equation}

%\section{Equation for subharmonic bifurcation}

%Let $M$ be a positive odd integer. Denote $S_M(t)$ the linear operator corresponding to the spectral problem (\ref{F19ba}) but now we assume that %the function $w$ there has period $M\Lambda_0$ and it is still even.

\subsection{Estimates of negative eigenvalues of $S_M(t)$}

Let
$$
\Omega_{M,t}=\{(X,Y)\,:\, X\in (-M\Lambda/2,M\Lambda/2),\;0<Y<\xi(X)\}
$$
We  introduce the bilinear form
\begin{eqnarray}\label{A29a}
&&{\bf a}_{M,t}(w,v)=\int_{\Omega_{M,t}}(\nabla w\nabla\overline{v}-\omega'(\Psi)w\overline{v})dYdX\nonumber\\
&&-\int_{-M\Lambda/2}^{M\Lambda/2}\rho(X;t)w(X,\xi(X;t))\overline{v}(X,\xi(X;t))\sqrt{1+\xi'^2}dX,
\end{eqnarray}
where the functions $\xi=\xi(X;t)$, $\Psi=\Psi(X,Y;t)$, $\Lambda=\Lambda(t)$ and $\rho=\rho(X;t)$ depends on $t$.
The form (\ref{A29a}) defined on functions from $H^1_{Me}(\Omega_{M,t})$, where
the index $Me$ means that only even functions are taken from the Sobolev space $H^1(\Omega_{M,t})$  which have the same values for $X=\pm M\Lambda/2$. The form ${\bf a}_{M,t}$ corresponds to the self-adjoint operator in the spectral problem
\begin{eqnarray*}
&&(\partial_X^2+\partial_Y^2)w+\omega'(\Psi)w +\theta\frac{1}{\Psi_Y}w=0\;\;\mbox{in ${\mathcal D}_{\xi}$},\\
&&\nu_xw_x+\nu_yw_y-\rho w=0\;\;\mbox{on ${\mathcal B}_{\xi}$},\\
&&w=0\;\;\mbox{for $X=0$},
\end{eqnarray*}
defined on $M\Lambda$-periodic, even functions.
We decompose the space $H^1_{Me}(\Omega_{M,t})$ in the orthogonal sum
\begin{equation}\label{A21a}
H^1_{Me}(\Omega_{M,t})=H^1_e(\Omega_{M,t})+{\mathcal X}_M,
\end{equation}
where $H^1_e(\Omega_{M,t})$ consists of restrictions of functions from $H^1_{pe}({\mathcal D}_\xi)$ onto $\Omega_{M,t}$.  %$W^{1,2}_{Me}(\Omega_{M,t}$ satisfying
%$$
%w(X+k\Lambda,Y)=w(X,Y)\;\;\mbox{for $X$ and integers $k$ satisfying $X,X+k\Lambda\in (-M\Lambda/2,M\Lambda/2)$}.
%$$
The restriction of the form ${\bf a}_{M,t}$ to the  subspace $H^1_e(\Omega_{M,\eta})$ corresponds to the spectral problem (\ref{J17a}) define on even $\Lambda$ periodic functions and so this is invariant subspace for the operator defined on $H^1_{Me}(\Omega_{M,t})$.
Therefore the  space ${\mathcal X}$ is orthogonal to the subspace $H^1_e(\Omega_{M,t})$ with respect to the inner product
$$
{\bf b}_{M,t}(w,v)=\int_{\Omega_{M,t}}w\overline{v}\frac{1}{\Psi_Y}dXdY
$$
and with respect to the form ${\bf a}_{M,t}$. So the space ${\mathcal X}_M$ is invariant for the operator $S_M(t)$ and
we denote the restriction operator $S_M(t)$ on ${\mathcal X}_M$ by $\widehat{S}_M(t)$.

Similar to Proposition \ref{Pm23} and Corollary \ref{Pm23a} one can show that

\begin{proposition}\label{Pm26a} The number of negative eigenvalues of the operator $S_M(t)$ and of the operator corresponding to the form ${\bf a}_{M,t}$ defined on $H^1_{Me}(\Omega_{M,t})$ is the same (accounting their multiplicities).
\end{proposition}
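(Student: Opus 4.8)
The plan is to reproduce, in the physical variables $(X,Y)$ and on the enlarged period $M\Lambda$, the argument that proved Proposition \ref{Pm23} and Corollary \ref{Pm23a}. First I would introduce two subspaces of $H^1_{Me}(\Omega_{M,t})$: the space $\widehat{H}$ of those $w$ that solve $\Delta w+\omega'(\Psi)w=0$ in $\Omega_{M,t}$ and vanish on the bottom $Y=0$, and the space $\widetilde{H}$ of those $w$ that vanish on the free surface $Y=\xi(X)$. By Proposition \ref{PrM1} (whose proof applies verbatim to functions of period $M\Lambda_0$), every $w\in H^1_{Me}(\Omega_{M,t})$ splits uniquely as $w=U+V$ with $U\in\widehat{H}$ the solution of $\Delta U+\omega'(\Psi)U=0$ having the same trace as $w$ on $\{Y=\xi(X)\}$, and $V=w-U\in\widetilde{H}$; here $\widehat{H}\cap\widetilde{H}=\{0\}$ by uniqueness for the Dirichlet problem. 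Through the $M\Lambda_0$-periodic versions of Corollaries \ref{CorM1} and \ref{CorM2}, the quadratic form $g\mapsto\langle S_M(t)g,g\rangle$ of the Dirichlet--Neumann operator coincides with ${\bf a}_{M,t}(w_g,w_g)$, where $w_g\in\widehat{H}$ is the solution of the problem (\ref{F19bb}) (on the period $M\Lambda_0$) with trace $g$; hence the number of negative eigenvalues of $S_M(t)$ equals the largest dimension of a subspace of $\widehat{H}$ on which ${\bf a}_{M,t}$ is negative definite (the positive weight $1/\Psi_Y$ in (\ref{A23aa}) plays no role in this count).

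Next I would establish the two structural facts on which the argument rests, both obtained exactly as in the proof of Proposition \ref{Pm23}. The orthogonality ${\bf a}_{M,t}(U,V)=0$ for $U\in\widehat{H}$, $V\in\widetilde{H}$ follows by integration by parts in (\ref{A29a}): the surface integral drops since $V=0$ on $\{Y=\xi(X)\}$, the contribution of $Y=0$ drops since $V=0$ there, the lateral contributions at $X=\pm M\Lambda/2$ cancel by $M\Lambda$-periodicity, and what remains is $-\int_{\Omega_{M,t}}(\Delta U+\omega'(\Psi)U)\,V\,dXdY=0$. The positivity ${\bf a}_{M,t}(V,V)>0$ for $V\in\widetilde{H}\setminus\{0\}$ follows because the surface term again vanishes, leaving $\int_{\Omega_{M,t}}(|\nabla V|^2-\omega'(\Psi)|V|^2)\,dXdY$, which is positive on functions vanishing on $\partial\Omega_{M,t}$ by the same argument used for $\Lambda_0$-periodic functions in Proposition \ref{Pm23} (the positive solution $\Psi_Y$ of the linearized equation certifies this positivity, regardless of the period). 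Both the form ${\bf a}_{M,t}$ on $H^1_{Me}(\Omega_{M,t})$ and the operator $S_M(t)$ have discrete spectrum, bounded from below with $+\infty$ as the only accumulation point, so for each of them the number of negative eigenvalues is the largest dimension of a subspace on which the associated quadratic form is negative definite.

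The counting is then identical to that in Proposition \ref{Pm23}. Write $N_1$ for the number of negative eigenvalues of ${\bf a}_{M,t}$ on $H^1_{Me}(\Omega_{M,t})$ and $N_2$ for that of $S_M(t)$. Since $\widehat{H}\subset H^1_{Me}(\Omega_{M,t})$ and the quadratic form of $S_M(t)$ is the restriction of ${\bf a}_{M,t}$ to $\widehat{H}$, we get $N_2\le N_1$. For the opposite inequality, pick a subspace $Z\subset H^1_{Me}(\Omega_{M,t})$ with $\dim Z=N_1$ on which ${\bf a}_{M,t}$ is negative definite and split each $w\in Z$ as $w=U+V$ as above. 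The map $w\mapsto U$ is injective on $Z$: if $U=0$ then $w=V\in\widetilde{H}$, so $w=0$ or ${\bf a}_{M,t}(w,w)>0$, and the latter is incompatible with $w\in Z$; hence $Z_0:=\{U:w\in Z\}\subset\widehat{H}$ has dimension $N_1$. Using ${\bf a}_{M,t}(U,V)=0$ and ${\bf a}_{M,t}(V,V)\ge 0$,
$$
{\bf a}_{M,t}(U,U)={\bf a}_{M,t}(w,w)-{\bf a}_{M,t}(V,V)\le{\bf a}_{M,t}(w,w)<0
$$
for every $w\in Z\setminus\{0\}$, so ${\bf a}_{M,t}$ is negative definite on $Z_0$ and $N_2\ge N_1$. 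Therefore $N_1=N_2$.

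The integrations by parts and the tracking of which boundary pieces vanish are routine. The one step that genuinely requires care is the identification in the first paragraph, namely that the quadratic form of the Dirichlet--Neumann operator $S_M(t)$ agrees with the restriction of ${\bf a}_{M,t}$ to $\widehat{H}$ --- i.e. that the splitting $w=U+V$ is exactly orthogonal in the energy form and that the weight in (\ref{A23aa}) does not alter the negative-eigenvalue count --- after which everything is a transcription of the proof of Proposition \ref{Pm23} to the variables $(X,Y)$ and the period $M\Lambda$.
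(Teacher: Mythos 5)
Your proof is correct and follows exactly the route the paper intends: it transcribes the decomposition argument of Proposition \ref{Pm23} (splitting $H^1_{Me}(\Omega_{M,t})$ into the harmonic-extension subspace and the subspace vanishing on the free surface, using the ${\bf a}_{M,t}$-orthogonality of the splitting and the positivity of the form on the latter subspace via the positive solution $\Psi_Y$) together with the change-of-variables identification of Corollary \ref{Pm23a}, adapted to period $M\Lambda$. The paper itself only states that the proof is "similar to Proposition \ref{Pm23} and Corollary \ref{Pm23a}", and your write-up supplies precisely those details.
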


\begin{proposition}\label{PrA29b} Let $\widehat{S}_M(t)$ be the operator defined above on ${\mathcal X}_M$. Then

(i) the number of negative eigenvalues of the operator $\widehat{S}_M(t)$ is estimated from above by $C_*(t)M$, where $C_*(t)$ depends on $t$ but it is independent of $M$;

(ii) for every $N>0$ there exist the index $j(N)$ such that the number of negative eigenvalue of the operator $\widehat{S}_M(t_{j(N)})$ is estimated from below by $MN$. Moreover $j(N)\to\infty$ as $N\to\infty$.

\end{proposition}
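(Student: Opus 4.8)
The plan is to analyze the two assertions separately, both resting on the decomposition (\ref{A21a}) and the fact that $\widehat{S}_M(t)$ is the part of the spectral problem on $\Omega_{M,t}$ that is ``genuinely'' of period $M\Lambda$, i.e.\ orthogonal (both in ${\bf b}_{M,t}$ and in ${\bf a}_{M,t}$) to the $\Lambda$-periodic functions. For part (i), I would use the standard min-max characterization: the number of negative eigenvalues of $\widehat{S}_M(t)$ equals the maximal dimension of a subspace of ${\mathcal X}_M$ on which the quadratic form ${\bf a}_{M,t}$ is negative definite. The key observation is that $\Omega_{M,t}$ is the union of $M$ translated copies of the fundamental cell $\Omega_{1,t}=\Omega_{\xi}$, and on each cell the form controls $\int(|\nabla w|^2-\omega'(\Psi)|w|^2)\,dXdY$ minus a boundary term with the bounded (away from stagnation, for fixed $t$) weight $\rho(\cdot;t)$. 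Since for fixed $t$ the wave is smooth and bounded away from stagnation, the form ${\bf a}_{1,t}$ on a single cell (with periodic matching conditions appropriate to the block) has a bounded number $C_*(t)$ of negative squares; decomposing $\Omega_{M,t}$ into the $M$ cells and using the fact that the Dirichlet form is superadditive over the decomposition (cutting along the vertical lines $X=(2k+1)\Lambda/2$ only raises the form), a subspace of dimension exceeding $C_*(t)M$ cannot be negative definite. This gives the upper bound $C_*(t)M$.

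For part (ii), the idea is to transfer the concentration result of Theorem \ref{ThMa} from the period-$\Lambda$ problem to the period-$M\Lambda$ problem by periodization. Fix $N>0$. Theorem \ref{ThMa} produces, for any $\varepsilon>0$ and $j$ large, a subspace ${\mathcal X}_j\subset H^1_{pe}(\Omega_{\xi_j})$ with $\dim{\mathcal X}_j\to\infty$, $a_{\xi_j}(u,u)<0$ on ${\mathcal X}_j\setminus\{0\}$, and — crucially, by Remark \ref{R28} — every $u\in{\mathcal X}_j$ supported in a $2\sigma$-neighborhood of the crest $(0,R)$, hence vanishing near the trough and near the lateral boundary $X=\pm\Lambda/2$ of the cell. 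Choose $j=j(N)$ so large that $\dim{\mathcal X}_j\geq N$; restrict to an $N$-dimensional subspace $\mathcal{Y}_j$. Now on $\Omega_{M,t_{j(N)}}$ we have $M$ disjoint copies of this crest neighborhood, one per cell (centered at $X=k\Lambda$, $k=-M_1,\dots,M_1$); translating the functions of $\mathcal{Y}_j$ to the $k$-th copy and extending by zero produces a subspace of dimension $MN$ in $H^1_{Me}(\Omega_{M,t_{j(N)}})$ on which ${\bf a}_{M,t_{j(N)}}$ is negative definite, because the supports are disjoint and ${\bf a}_{M,t}$ is just the sum of the single-cell forms $a_{\xi_{j(N)}}$ over the cells. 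Finally, I must check that this $MN$-dimensional space can be taken inside ${\mathcal X}_M$ rather than merely inside $H^1_{Me}$: this is where I project off the $\Lambda$-periodic part $H^1_e(\Omega_{M,t})$. Since the negative-square count on $H^1_e$ is exactly the period-$\Lambda$ count (the $C_*(t)$ from part (i)), and the ${\bf a}_{M,t}$-orthogonal complement is precisely ${\mathcal X}_M$, the subtraction reduces the dimension by at most a bounded amount; replacing $N$ by $N+C_*(t_{j(N)})$ at the outset (harmless since $j(N)\to\infty$ and we only need the bound to grow) yields $MN$ negative eigenvalues of $\widehat{S}_M(t_{j(N)})$. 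Combined with Proposition \ref{Pm26a} and Corollary \ref{SOkt11a}, which identify the negative-eigenvalue counts of $S_M$, $\widehat S_M$ and the various equivalent spectral problems, this gives the claim; that $j(N)\to\infty$ is inherited directly from Theorem \ref{ThMa}(iii)–(iv).

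The main obstacle I expect is the bookkeeping in the orthogonal splitting (\ref{A21a}): one must verify carefully that ${\mathcal X}_M$ is simultaneously ${\bf a}_{M,t}$- and ${\bf b}_{M,t}$-invariant (so that ``number of negative eigenvalues of $\widehat S_M$'' is literally ``number of negative squares of ${\bf a}_{M,t}$ restricted to ${\mathcal X}_M$''), and that projecting the translated, disjointly-supported test functions onto ${\mathcal X}_M$ does not destroy negative-definiteness — this requires that the projection off $H^1_e(\Omega_{M,t})$ changes the form by a controlled amount, i.e.\ an interpolation/perturbation estimate using that the $H^1_e$-part of such a compactly-supported, crest-concentrated function is small. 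A secondary technical point is the superadditivity of the Dirichlet form under cutting $\Omega_{M,t}$ into cells together with the correct handling of the boundary integral over $B_\xi$, which splits exactly without error; this is routine but must be stated. Everything else — the min-max formulas, the translation invariance of the equations under $X\mapsto X+\Lambda$, and the fixed-$t$ smoothness of $\Psi$, $\xi$, $\rho$ away from stagnation — is available from the results already established above.
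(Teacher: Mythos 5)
Your argument for part (i) is essentially the paper's: the paper defines $C_*(t)$ as the number of negative eigenvalues of the single-cell problem (\ref{A25a}) with Neumann conditions $w_X=0$ at $X=\pm\Lambda/2$ and invokes Dirichlet--Neumann bracketing over the $M$ translated cells, which is exactly your additive-decomposition/min-max argument (note the form is exactly additive over the cells, not merely superadditive; that exact additivity is what makes the counting work). No issue there.

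Part (ii), however, has a genuine gap in the last step, the passage from $H^1_{Me}(\Omega_{M,t})$ to ${\mathcal X}_M$. Your $MN$-dimensional negative subspace of translated, disjointly supported copies lives in $H^1_{Me}$, and since the splitting (\ref{A21a}) is ${\bf a}_{M,t}$-orthogonal and invariant, the correct accounting is $N_-(\widehat{S}_M(t_j))\geq MN-N_-(S(t_j))$, where $N_-(S(t_j))$ is the number of negative eigenvalues of the $\Lambda$-periodic problem at $t_j$. To conclude you need an \emph{upper} bound on $N_-(S(t_j))$ in terms of $N=\dim{\mathcal X}_j$, and no such bound is available: Theorem \ref{ThMa}(iv) gives only $N_-(S(t_j))\geq\dim{\mathcal X}_j$, and the bound $N_-(S(t_j))\leq C_*(t_j)$ from part (i) is useless here because $C_*(t_j)\geq N_-(S(t_j))\geq\dim{\mathcal X}_j$, so your proposed compensation ``replace $N$ by $N+C_*(t_{j(N)})$'' can never be realized — you would need $\dim{\mathcal X}_j\geq N+C_*(t_j)$ while $\dim{\mathcal X}_j\leq C_*(t_j)$. (It is also circular, since $j(N)$ is chosen after $N$.) For fixed small $M$, say $M=3$, the deficit $N_-(S(t_j))$ could in principle exceed $3N$, making the bound vacuous. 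The paper avoids the projection entirely by building test functions \emph{directly in} ${\mathcal X}_M$: it takes zero-sum combinations of the disjointly supported translates, e.g.
\[
w_k(X,Y)=w(X-(2k-1)\Lambda_j,Y)+w(X+(2k-1)\Lambda_j,Y)-w(X-2k\Lambda_j,Y)-w(X+2k\Lambda_j,Y),
\]
for which orthogonality to $H^1_e(\Omega_{M,t})$ is immediate (the sum of the coefficients over all $\Lambda_j$-translates vanishes, so the ${\bf b}_{M,t}$-pairing with any $\Lambda_j$-periodic function is zero), and negativity of ${\bf a}_{M,t_j}$ on their span follows from disjointness of supports and the single-cell computation of Theorem \ref{ThMa}. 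Your construction is repaired the same way: impose $\sum_k\alpha_k=0$ on the coefficients of the translates before extending by zero; this costs only one dimension per basis function of ${\mathcal X}_j$, i.e.\ yields $(M-1)N$ instead of $MN$, which is harmless since $N$ is arbitrary.
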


\begin{proof} (i) Consider the problem (for $M=1$)
\begin{eqnarray}\label{A25a}
&&(\partial_X^2+\partial_Y^2)w+\omega'(\Psi)w+\theta\frac{1}{\Psi_Y}w=0\;\;\mbox{in $\Omega_t$},\nonumber\\
&&\nu_Xw_x+\nu_Yw_y-\rho w=0\;\;\mbox{for $Y=\xi(X)$,\,$X\in(-\Lambda/2,\Lambda/2)$},\nonumber\\
&&w=0\;\;\mbox{for $X=0$},\nonumber\\
&&w_X=0\;\;\mbox{for $X\pm\Lambda/2$ and $0<Y<\xi(\pm\Lambda/2)$},
\end{eqnarray}
where $(\Psi,\xi,\Lambda)$ depends on $t$.
Denote by $C_*=C_*(t)$ the number od negative eigenvalue of this operator. Then the number of negative eigenvalues of the operator $\widehat{S}_M(t)$ is estimated from above by $C_*M$ which proves (i).

(ii) Represent $M$ as $2M_1+1$ and introduce domains
$$
\Omega_{j,k}=\{(X,Y)\;:\;X\in (\Lambda_j/2+(k-1)\Lambda_j,\Lambda_j/2+k\Lambda_j),\;0<Y<\xi_j(X)\}\;\;k=1,\ldots,M_1,
$$
and
$$
\Omega_{j,-k}=\{(X,Y)\;:\;X\in (-\Lambda_j/2+(-k+1)\Lambda_j,-\Lambda_j/2-k\Lambda_j),\;0<Y<\xi_j(X)\}\;\;k=1,\ldots,M_1.
$$
Let $w$ be the same function as in the proof of Theorem \ref{ThMa}.  Consider the functions
$$
w_k(X,Y)=w(X-(2k-1)\Lambda_j,Y)+w(x+(2k-1)\Lambda_j,Y)-(w(x-2k\Lambda_j,Y)-w(x+2k\Lambda_j,Y),
$$
where $k=1,3,5,\ldots,M_2,$ and $2M_2\leq M_1$. Then $w_k\in {\mathcal X}_M$ (here we can refer to Remark \ref{R28} concerning smoothness of this function) and $w_k$ are pairwise orthogonal. Due to the calculation performed in the proof of Theorem \ref{ThMa}, we get
$$
a_{M,t_j}(\sum_k\alpha_kw_k,\sum_k\alpha_kw_k)<0
$$
provided not all coefficients $\alpha_k$ are zero. This implies that the number of negative eigenvalues is estimated by
$$
N_jL_2.
$$
where $N_j$ the number of coefficient in the definition of the function $w$, which tends to infinity when $j\to \infty$. This implies (ii).

\end{proof}

%\subsection{On negative spectrum of the operator $S_M$}

\begin{proposition}\label{RA19a} For each $N>0$ there exists $T_\dagger$ such that if $t_j>T_\dagger$ then the operators $\widehat{S}_M(t_j)$ have at least $N$ negative eigenvalues for all odd $M$.

\end{proposition}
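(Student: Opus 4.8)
The plan is to derive Proposition~\ref{RA19a} from part~(ii) of Proposition~\ref{PrA29b} by choosing the cutoff index appropriately. Recall that part~(ii) of Proposition~\ref{PrA29b} gives, for every $N>0$, an index $j(N)$ with $j(N)\to\infty$ as $N\to\infty$ such that $\widehat{S}_M(t_{j(N)})$ has at least $MN$ negative eigenvalues for all odd $M$; in particular, since $M\geq 1$, it has at least $N$ negative eigenvalues. So the work is to promote this from a single well-chosen index $t_{j(N)}$ to \emph{all} indices $t_j$ with $t_j$ large, i.e.\ to show that the count of negative eigenvalues is monotone (or at least does not drop) as one moves along the subsequence $\{t_j\}$ toward the extreme wave.

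First I would fix $N>0$ and apply Proposition~\ref{PrA29b}(ii) to obtain $j(N)$ and a subspace ${\mathcal W}\subset {\mathcal X}_M$ with $\dim{\mathcal W}\geq N$ on which the form ${\bf a}_{M,t_{j(N)}}$ is negative definite; by Remark~\ref{R28} the test functions realizing this are supported in a $2\sigma$-neighborhood of the crest $(0,R)$ of the (near-)extreme wave, hence they are supported away from the trough. Then I would argue that the \emph{same} family of test functions (after the harmless rescaling $x=\lambda X$ that matches periods, or equivalently after transplanting to $\Omega_{M,t_j}$) continues to make ${\bf a}_{M,t_j}$ negative definite for every $j\geq j(N)$. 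The key point is the convergence results of Section~\ref{SectA8}: on any region bounded away from the stagnation point, $(\Psi_j,\xi_j,\rho_j)\to(\Psi_*,\xi_*,\rho_*)$ in $C^{2,\alpha}\times C^{2,\alpha}\times C^{0,\alpha}$, and — more to the point — the estimate \eqref{A29b} obtained in the proof of Theorem~\ref{ThMa} was established \emph{uniformly} in $j$ once $\varepsilon$ (equivalently $\sigma,\delta,N$) is fixed small. Thus for the fixed choice of parameters producing $N$ negative directions, the inequality $a_j(w,w)\geq -c_0|a|^2/2<0$ holds simultaneously for all $j$ beyond some threshold $j_0(N)$. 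Setting $T_\dagger := t_{j_0(N)}$ (and using $t_j\to\infty$), every $t_j>T_\dagger$ then has $\widehat{S}_M(t_j)$ with at least $N$ negative eigenvalues, for all odd $M$, because the subspace ${\mathcal W}$ lies in ${\mathcal X}_M$ for every odd $M\geq 1$ by the same periodic-extension construction as in Proposition~\ref{PrA29b}(ii).

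The main obstacle, and the step requiring care, is the \emph{uniformity in $j$} of the negativity estimate. In the proof of Theorem~\ref{ThMa} the perturbation terms $b_j(w,w)$ and $\widehat b(w,w)$ were bounded using \eqref{A2b} and \eqref{A2ba}, i.e.\ by closeness of $\xi_j,\rho_j$ to $\xi_*,\rho_*$ on the annulus $X\in(\varepsilon,\delta)$ — and that closeness is only guaranteed for $j\geq j(\varepsilon_1)$. So after fixing $\varepsilon_1$ (hence $\varepsilon$, $\sigma$, $\delta$, $N$) one must check that the threshold $j(\varepsilon_1)$ is itself finite, which it is, and then take $T_\dagger$ larger than $t_{j(\varepsilon_1)}$; no further dependence on $j$ enters because the dominant term $a(W_\varepsilon,W_\varepsilon)=-c_0|a|^2(1+O(N\sigma^2))$ is $j$-independent. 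One also needs the bound $\Psi_{jY}\geq c\delta$ near the crest uniformly in $j$ (Proposition~\ref{Mp2}), which is exactly what makes the weighted form ${\bf b}_{M,t_j}$ comparable across $j$ on the support of the test functions. Assembling these, $T_\dagger$ depends only on $N$ (through the fixed small parameters) and on the fixed data $R,\omega$, and the conclusion holds for all odd $M$ simultaneously since $\dim{\mathcal W}\geq N$ is already achieved with the $k=1$ block of translates.
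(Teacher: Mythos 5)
Your proposal is correct and follows essentially the same route as the paper: the crest-concentrated test functions from Theorem \ref{ThMa}, whose negativity estimate (\ref{A29b}) is uniform in $j$ once the small parameters are fixed, are transplanted into ${\mathcal X}_M$ by a combination of disjointly supported translates (the paper uses $v=w_0-\tfrac12(w_{+1}+w_{-1})$), and Proposition \ref{Pm26a} converts the negative subspace into a count of negative eigenvalues of $\widehat{S}_M(t_j)$. Your emphasis on the uniformity in $j$ of the threshold $j(\varepsilon_1)$ is exactly the point the paper relies on implicitly when it cites (\ref{A29b}).
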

\begin{proof} If $M=1$ the result is proved in Theorem \ref{ThMa}.  Let $M=2M_1+1$, $M_1\geq 1$. Introduce three domains
$$
\Omega_0=\{(x,y)\,:\,-\Lambda_j/2<X<\Lambda_j/2,\;0<Y<\xi(X)\}
$$
and
\begin{eqnarray*}
&&\Omega_1=\{(X,Y)\,:\,\Lambda_j/2<X<3\Lambda_j/2,\;0<Y<\xi(X)\},\\
&&\Omega_{-1}=\{(X,Y)\,:\,-3\Lambda_j/2<X<-\Lambda_j/2,\;0<Y<\xi(X)\}.
\end{eqnarray*}
Let $w_0(X,Y)=w(X,Y)$ for $X\in (-\Lambda_j/2,\Lambda_j/2)$ and $w_0(X,Y)=0$ for $X$ outside $(-\Lambda_j/2,\Lambda_j/2)$, where $w$ is the test function defined by (\ref{M31aa}). We put $w_{+1}(X,Y)=w_0(X-\Lambda_j,Y)$ and $w_{-1}(X,Y)=w_0(X+\Lambda_j,Y)$. Then the function
$$
v=w_0-\frac{1}{2}\Big(w_{+1}+w_{-1}\Big)\;\;\mbox{belongs to ${\mathcal X}_M$}.
$$
Then
$$
a_{M,t_j}(v,v)=2a_{1,t_j}(w,w)>0\;\;\mbox{for $w\neq 0$},
$$
where the last inequality follows from (\ref{A29b}). The reference to Proposition \ref{Pm26a}
completes the proof.
\end{proof}

%\section{Subharmonic bifurcations}

\section{Main theorem}

\subsection{Bifurcation theorem for potential operators}

We will use the following bifurcation theorem for potential operator, see \cite{Ki1}, \cite{Ki2}.
Let $X$ and $Z$ be real Banach spaces and let $X$ be continuously embedded into $Z$. We assume that $Z$ is supplied with an inner product which is continuous with respect to the norm in $Z$. Consider a map
\begin{equation}\label{F17a}
F\,:\, U\times V\rightarrow Z,
\end{equation}
where $U$ is a neighborhood of $0$ in $X$ and $V$ is a neighborhood of $t_*$ in $\Bbb R$. We assume that

(i) $F\in C(U\times V,Z)$, $D_xF\in C(U\times V,L(X,Z))$ and $A(t):=D_xF(0,t)$ is a family of Fredholm operators of index zero such that
the operator $A(t)$ considered as unbounded operator in $Z$ with domain of definition $X$ is closed for each $t\in V$.

(ii) $F(\cdot,t)$ is a potential operator from $U$ into $Z$.

(iii) $F(0,t)=0$ for $s\in V$.

We note that (i) implies that the operator $A(t)$ is self-adjoint. We recall the definition of the crossing number. Let $0$ be an isolated eigenvalue of $A(t_*)$  and let $A(t)$ be invertible $t\in (t_*-\epsilon,t_*)\cup (t_*+\epsilon)$ for a small positive $\epsilon$. Denote by $\sigma_-(t)$ ($\sigma_+(t)$) the sum of multiplicities of perturbed eigenvalues of $A(t)$ near zero on the negative (positive) real axis.
Then the limit
$$
\chi (A(t),t_*):=\lim_{\epsilon\to 0}(\sigma_-(-\epsilon)-\sigma_+(\epsilon))
$$
exists and it is called the crossing number of the family $A(t)$ through $0$ at $t_*$.

\begin{theorem}\label{ThA5} (\cite{Ki1}, Theorem II.7.3) Let the family {\rm (\ref{F17a})} satisfy {\rm (i)--(iii)}. Let $0$ be an isolated eigenvalue of $A(t_*)$. If $A(t)$ is invertible for $V\setminus \{0\}$ and the crossing number of the family $A(t)$ at $t_*$ is nonzero then $(0,t_*)$ is a bifurcation point of $F(x,t)=0$ in the following sense: $(0,t_*)$ is a cluster point of nontrivial solution $(x,t)\in U\times V$, $x\neq 0$ of $F(x,t)=0$.

\end{theorem}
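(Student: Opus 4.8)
The plan is to prove this by the classical route of variational bifurcation theory: a Lyapunov--Schmidt reduction to a finite-dimensional problem, the observation that this reduction preserves the potential (gradient) structure, and a Morse-theoretic continuation argument that turns the nonvanishing of the crossing number into the existence of nearby nontrivial solutions. (The result is Kielh\"ofer's, \cite{Ki1}; hypotheses (i)--(iii) are exactly what is to be verified for the operators $S(t)$ and $S_M(t)$ introduced above.)

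First I would carry out the reduction. Since $A(t_*)=D_xF(0,t_*)$ is self-adjoint and Fredholm of index zero, $N:=\ker A(t_*)$ is finite-dimensional and $Z=N\oplus\operatorname{Range}A(t_*)$ with $\operatorname{Range}A(t_*)=N^{\perp}$; let $P$ be the orthogonal projection of $Z$ onto $N$, so that $X=N\oplus(X\cap N^{\perp})$ as well. Splitting $F(x,t)=0$ into $PF=0$ and $(I-P)F=0$ and writing $x=v+w$ with $v\in N$, $w\in X\cap N^{\perp}$, the complementary equation $(I-P)F(v+w,t)=0$ can be solved for $w=w(v,t)$ near $(0,t_*)$ by the implicit function theorem, since its $w$-derivative at the origin is $(I-P)A(t_*)$ restricted to $X\cap N^{\perp}$, an isomorphism onto $N^{\perp}$; by (iii), $w(0,t)\equiv 0$. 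This produces the reduced bifurcation equation $\Phi(v,t):=PF(v+w(v,t),t)=0$ on a neighbourhood of the origin in $N\times V$, with $\Phi(0,t)\equiv 0$ and $D_v\Phi(0,t)=PA(t)(I+D_vw(0,t))$, which vanishes on $N$ at $t=t_*$, is invertible for $t\neq t_*$, and whose eigenvalues are precisely the perturbed eigenvalues of $A(t)$ tending to $0$ as $t\to t_*$ — so the local structure of the trivial critical point of the reduced functional changes across $t_*$ in accordance with the crossing number $\chi(A(t),t_*)$.

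Next comes the decisive point: $\Phi(\cdot,t)$ is again a gradient. Writing $F(\cdot,t)=\nabla g(\cdot,t)$ by (ii) and setting $\phi(v,t):=g(v+w(v,t),t)$, the chain rule gives $\langle\nabla_v\phi(v,t),\xi\rangle=\langle F(v+w,t),\xi+D_vw(v,t)\xi\rangle$ for $\xi\in N$; the term $\langle F(v+w,t),D_vw(v,t)\xi\rangle$ vanishes because $D_vw(v,t)\xi\in N^{\perp}$ while $(I-P)F(v+w,t)=0$ along the solution branch of the complementary equation, leaving $\nabla_v\phi(v,t)=PF(v+w,t)=\Phi(v,t)$. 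Now I would argue by contradiction. If $(0,t_*)$ were not a bifurcation point there would be $\rho>0$ and a neighbourhood $V'\ni t_*$ with $v=0$ the only zero of $\Phi(\cdot,t)$ in $\overline{B_{\rho}}\subset N$ for all $t\in V'$; then $\{0\}$ is a uniformly isolated invariant set of the negative gradient flow of $\phi(\cdot,t)$ in $\overline{B_{\rho}}$, so its Conley index (equivalently, the critical groups of the trivial critical point) is invariant along all of $V'$, in particular across $t_*$. But for $t$ on either side of $t_*$ the critical point is nondegenerate, hence this invariant is the homotopy type of a pointed sphere whose dimension is the Morse index there; nonvanishing of the crossing number is incompatible with this invariant being the same on both sides and simultaneously matching, via the splitting lemma, the degenerate behaviour of $\phi(\cdot,t_*)$ at the origin. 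This contradiction shows $(0,t_*)$ is a bifurcation point, and since $\rho$ was arbitrary it is a cluster point of nontrivial solutions $(x,t)$, $x\neq 0$.

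The hardest step is the last one: extracting a genuine discontinuity of the critical-point invariant from a purely spectral hypothesis, and controlling the fully degenerate Hessian at $t=t_*$ through the Gromoll--Meyer splitting lemma. It is precisely here that the potential structure (ii) is indispensable — the Leray--Schauder degree of $\Phi(\cdot,t)$ equals $(-1)^{m(t)}$ with $m(t)$ the Morse index, and so detects only changes of parity, whereas the gradient structure makes available the finer Conley-index / critical-group invariants that actually register the crossing number.
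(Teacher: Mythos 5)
The paper does not prove this statement at all: it is quoted verbatim as Theorem II.7.3 of Kielh\"ofer \cite{Ki1} and used as a black box, so there is no in-paper argument to compare yours against. Your sketch is, in outline, Kielh\"ofer's own proof, and it is essentially correct: the Lyapunov--Schmidt reduction, the verification that the reduced map $\Phi(\cdot,t)=PF(\cdot+w(\cdot,t),t)$ is the gradient of $\phi(v,t)=g(v+w(v,t),t)$ (your cancellation $\langle F(v+w,t),D_vw\,\xi\rangle=0$ because $F(v+w,t)\in N$ while $D_vw\,\xi\in N^{\perp}$ is exactly the right point), and the contradiction via invariance of the Conley index (equivalently, of the critical groups) of the uniformly isolated critical point $0$, whose index for $t\neq t_*$ is a pointed sphere of dimension equal to the local Morse index. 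One step is stated too strongly and is the only place that genuinely needs shoring up: the eigenvalues of $D_v\Phi(0,t)=PA(t)(I+D_vw(0,t))|_N$ are \emph{not} literally the perturbed eigenvalues of $A(t)$ near $0$; what is true, and what Kielh\"ofer proves as a separate lemma, is that the number of negative eigenvalues of the (symmetric) reduced Hessian equals $\sigma_-(t)$ for $t$ near $t_*$, so that the crossing numbers of the two families coincide. Without that identification the nonvanishing of $\chi(A(t),t_*)$ does not yet translate into a jump of the Morse index of $\phi(\cdot,t)$ at $0$, which is what the Conley-index comparison actually detects; with it, your argument closes (and the appeal to the Gromoll--Meyer splitting at $t=t_*$ is not needed, since the contradiction already comes from comparing the two nondegenerate sides). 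Your closing remark about why the gradient structure is indispensable --- Leray--Schauder degree only sees the parity $(-1)^{m(t)}$ --- is accurate and is precisely the reason the potential hypothesis (ii) appears in the theorem.
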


\subsection{Properties of the operator $S_M(t)$}\label{SA22ab}

Our aim is to apply Theorem \ref{ThA5} to the problem (\ref{A23a}), where we have the same differential operator and the same boundary conditions as in (\ref{A5aa}) but solutions have period $M\Lambda_0$. So we want to describe  bifurcation points for equation (\ref{A23a}), the kernel of the operator $S_M(t)$ when it is non-zero and study when the crossing number is non-trivial.

 The first step in the above programm is to describe bounded solutions to the problem
 \begin{eqnarray}\label{A6a}
 &&A(t)u=0\;\;\mbox{in $Q$},\nonumber\\
 &&N(t)u-u=0\;\;\mbox{for $p=1$},\nonumber\\
 &&u=0\;\;\mbox{for $p=0$}.
 \end{eqnarray}
 %which are not necessary $\Lambda_0$-periodic, another eigenvalue problem must be used. It has the form %to study, in particular, periodic solutions with period $n\Lambda_0$, $n$ is a positive integer, is the following
 Usually this can be done by using Floquet exponents to the problem
\begin{eqnarray}\label{M21a}
&&A(\tau,t)w=0\;\;\mbox{in $Q$}\nonumber\\
&&N(\tau,t)w-w=0\;\;\mbox{for $p=1$}\nonumber\\
&&w=0\;\;\mbox{for $p=0$},
\end{eqnarray}
where
\begin{equation}\label{M23b}
A(\tau,t)w=e^{-i\tau q}A(t)(e^{i\tau q}w)
\end{equation}
and
\begin{equation}\label{M23ba}
N(\tau,t)w=e^{-i\tau q}N(t)(e^{i\tau q}w)
\end{equation}
Here $A(t)$ and $N(t)$ are given by (\ref{J4aa}) and (\ref{J4ab}) respectively. The spectral parameter $\tau \in\Bbb C$ for which the problem has non-trivial $\Lambda_0$-periodic solutions is called the Floquet exponent. Real $\tau$ produce bounded solution and complex $\tau$ corresponds to unbounded solutions at $+\infty$ or $-\infty$. As it is known (see \cite{ShaSo}, Sect.3.3) the $\tau$ spectrum of the problem (\ref{M21a}) consists of isolated Floquet exponents having finite algebraic multiplicity. We note that in study of problem (\ref{M21a}) we do not assume the corresponding eigenfunctions are even but certainly they have period $\Lambda_0$. Bounded solutions of (\ref{A6a}) are given by
$$
u(q,p)=e^{i\tau q}w(q,p),
$$
where $\tau$ is a real Floquet exponent and $w$ is a corresponding $\Lambda_0$-periodic eigenfunction. If we are looking for $M\Lambda_0$ periodic solutions then  $\tau M\Lambda_0=2k\pi$, i.e.
$$
\tau=\frac{2k\pi}{M\Lambda_0}=\frac{k}{M}\tau_*.
$$
Moreover,  solutions, even in $q$, are given by
$$
u_e(q,p)=e^{i\tau q}w(q,p)+e^{-i\tau q}w(-q,p).
$$
The above analysis implies, in particular, that if $M_1$ and $M_2$ are two prime numbers then the intersection of the kernels of $S_{M_1}$ and $S_{M_2}(t)$ coinsides with the kernel of the operator $S(t)$.

Another problem which is useful in study the crossing number of zero eigenvalue of the operator (\ref{M5ba}) is the following
\begin{eqnarray}\label{M21ad}
&&A(\tau,t)w=0\;\;\mbox{in $Q$},\nonumber\\
&&N(\tau,t)w-w=\mu w\;\;\mbox{for $p=1$},\nonumber\\
&&w=0\;\;\mbox{for $p=0$},
\end{eqnarray}
where $t\geq 0$ and $\tau$ is real.
Indeed, the study of crossing number is connected with eigenvalues of the operator $S_M(t)$ and this eigenvalues and corresponding eigenfunctions can be found with the help of the problem (\ref{M21ad}).

\subsection{Eigenvalues of (\ref{M21ad}) for $t=0$}

For $t=0$ we have $h=H(p)$, $\lambda=1$ and  the spectral problem (\ref{M21ad}) takes the form
\begin{eqnarray}\label{M14bb}
&&\Big(\frac{w_p}{H_p^3}\Big)_p+(\partial_q+i\tau)\Big(\frac{w_q+i\tau w}{H_p}\Big)=0\;\;\mbox{in $Q$}\nonumber\\
&&\frac{w_p}{H_p^3}-w=\mu w\;\;\mbox{for $p=1$}\nonumber\\
&&w=0\;\;\mbox{for $p=0$},
\end{eqnarray}
where $H$ is given by (\ref{M4c}).
To find solutions to this spectral problem we put
$$
w=\Gamma(q,H(p);\tau)H_p(p).
$$
Then according to Corollary \ref{CorM2} $\Gamma(x,y;\tau)$ satisfies
\begin{equation}\label{M14b}
((\partial_x+i\tau)^2+\partial_y^2)\Gamma+\omega'(H)\Gamma=0\;\;\mbox{for $0<y<d:=H(1)$}
\end{equation}
and
\begin{equation}\label{M14ba}
\Gamma(x,0;\tau)=0.%\;\;\Gamma(x,d;\tau)=e^{in\tau_* x}.
\end{equation}
Solutions to (\ref{M14b}), (\ref{M14ba})  with the period $\Lambda_0$ are given by
$$
\Gamma_n=e^{in\tau_* x}\gamma (y;\tau+n\tau_*),\;\;\mbox{$n$ is integer}
$$
where $\gamma$ is the solution to (\ref{Okt6b}).
 The  boundary condition at $p=1$ in (\ref{M14bb}) takes the form
$$
\frac{\gamma'(d,\tau+n\tau_*)}{H_p(1)}+\frac{H_{pp}(1)}{H_p^3(1)}-H_p(1)=H_p(1)\mu
$$
or
$$
\mu=\kappa^2\gamma'(d;\tau+n\tau_*)-1+\kappa\omega(1),
$$
where  the relation $H_p(1)=\kappa^{-1}$ is applied. Using the relation (\ref{Okt6ba}), we get
\begin{equation}\label{M21b}
\mu=\mu_n=\kappa\sigma(\tau+n\tau_*),\;\;n=0,\pm 1,\pm 2,\ldots
\end{equation}
Since $\sigma=\sigma(\tau)$ is an even function strongly increasing for $\tau>0$, we obtain

\begin{proposition}\label{PA20a} For every $\tau\in\Bbb R$ such that $2\tau\neq k\tau_*$, $k$ is an integer, the eigenvalues $\mu_n$ of the problem (\ref{M14bb}) are simple and given by (\ref{M21b}). Moreover, $\mu_n\neq 0$ for $\tau\in (0,\tau_*)$.
\end{proposition}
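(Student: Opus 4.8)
The plan is to read off the eigenvalues directly from the explicit formula \eqref{M21b} and then argue about simplicity and non-vanishing. First I would note that, by the computation preceding the proposition (substituting $w=\Gamma(q,H(p);\tau)H_p(p)$ and reducing via Corollary \ref{CorM2} to \eqref{M14b}–\eqref{M14ba}), every $\Lambda_0$-periodic solution $w$ of \eqref{M14bb} with $w\not\equiv 0$ is a linear combination of the Fourier modes $\Gamma_n = e^{in\tau_* x}\gamma(y;\tau+n\tau_*)$, $n\in\mathbb Z$, and the boundary condition at $p=1$ forces each active mode $n$ to satisfy $\mu=\mu_n=\kappa\sigma(\tau+n\tau_*)$. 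Conversely, for each $n$ the single mode $\Gamma_n$ is an eigenfunction for $\mu_n$. So the spectrum of \eqref{M14bb} is exactly $\{\mu_n\}_{n\in\mathbb Z}$, and the multiplicity of a value $\mu$ equals the number of indices $n$ with $\kappa\sigma(\tau+n\tau_*)=\mu$.

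Next I would establish simplicity under the hypothesis $2\tau\neq k\tau_*$ for all integers $k$. Suppose $\mu_n=\mu_m$ with $n\neq m$. Since $\sigma$ is even and strictly increasing on $(0,\infty)$, the equation $\sigma(a)=\sigma(b)$ for real $a,b$ forces $|a|=|b|$, i.e. $b=\pm a$. Thus $\tau+m\tau_* = \pm(\tau+n\tau_*)$. The $+$ case gives $m=n$, a contradiction; the $-$ case gives $2\tau + (m+n)\tau_* = 0$, i.e. $2\tau = -(m+n)\tau_*$, which is excluded by hypothesis since $m+n\in\mathbb Z$. Hence all $\mu_n$ are distinct and each eigenvalue is simple.

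For the last assertion, I would take $\tau\in(0,\tau_*)$ and show $\mu_n\neq 0$ for all $n$, i.e. $\sigma(\tau+n\tau_*)\neq 0$. Recall $\tau_*$ is the unique positive root of $\sigma$, and $\sigma$ is even with $\sigma(0)<0$, strictly increasing on $(0,\infty)$; hence the only real zeros of $\sigma$ are $\pm\tau_*$, and $\sigma(\xi)=0$ iff $|\xi|=\tau_*$. So I need $|\tau+n\tau_*|\neq\tau_*$ for every integer $n$ when $0<\tau<\tau_*$. For $n=0$ this is $0<\tau<\tau_*$; for $n\geq 1$ we have $\tau+n\tau_* > \tau_* $, so $|\tau+n\tau_*|>\tau_*$; for $n\leq -1$ we have $\tau+n\tau_* < \tau+(n+1)\tau_*-\tau_*\le \tau -\tau_* < 0$ and in fact $\tau+n\tau_* \le \tau-\tau_* \in(-\tau_*,0)$ when $n=-1$, while for $n\le -2$, $\tau+n\tau_* \le \tau-2\tau_* < -\tau_*$; in every case $|\tau+n\tau_*|\neq\tau_*$. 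Therefore $\mu_n\neq 0$ for all $n$, which completes the proof.

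The main obstacle is not really an obstacle here but a bookkeeping point: one must be careful to argue that \eqref{M14bb} has \emph{no} eigenfunctions beyond the single Fourier modes — this uses that the modes $\gamma(y;\tau+n\tau_*)$ for distinct $n$ are linearly independent (indeed they correspond to distinct $x$-frequencies $n\tau_*$ after the factor $e^{in\tau_* x}$), so that a combination of several modes can only be an eigenfunction if the corresponding $\mu_n$ coincide, reducing the multiplicity question to the counting argument above. The strict monotonicity and evenness of $\sigma$, already recorded in the text after \eqref{Okt6ba}, do all the real work.
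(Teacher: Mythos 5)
Your proposal is correct and follows essentially the same route as the paper: the paper derives the formula $\mu_n=\kappa\sigma(\tau+n\tau_*)$ by exactly the substitution $w=\Gamma H_p$ and mode decomposition you describe, and then deduces the proposition from the evenness and strict monotonicity of $\sigma$ on $(0,\infty)$. Your explicit case analysis (reducing $\mu_n=\mu_m$ to $2\tau=-(m+n)\tau_*$, and checking $|\tau+n\tau_*|\neq\tau_*$ for each $n$) simply fills in the details the paper leaves to the reader.
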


\begin{corollary}\label{CorA6} For every $\tau\in\Bbb R$ such that $2\tau\neq k\tau_*$ C the problem (\ref{M14bb}) has only simple eigenvalues except isolated points on the axis $t\geq 0$.
\end{corollary}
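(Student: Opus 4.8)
The assertion concerns the whole family $t\ge 0$ (the problem (\ref{M14bb}) being the specialization of (\ref{M21ad}) to $t=0$). The plan is to read (\ref{M21ad}), for a fixed $\tau$ with $2\tau\neq k\tau_*$, as the eigenvalue equation $S(\tau,t)g=\mu g$ for a Floquet--twisted Dirichlet--Neumann operator $S(\tau,t)$ built exactly as the operator $S$ of (\ref{M3a}): given a periodic $g$, I would let $w$ solve $A(\tau,t)w=0$ in $Q$ with $w=g$ on $p=1$ and $w=0$ on $p=0$, and set $S(\tau,t)g=(N(\tau,t)w-w)|_{p=1}$. Unique solvability of this twisted Dirichlet problem and the accompanying a priori estimate follow as in Proposition \ref{PrM1}: $A(\tau,t)$ differs from $A(t)$ only by lower--order terms and, thanks to $\psi_y>0$, has trivial kernel and cokernel.

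The operator $S(\tau,t)$ is self--adjoint for the natural $L^2$ pairing on $\{p=1\}$ --- indeed $\langle S(\tau,t)g,g\rangle$ equals the twisted energy form evaluated on the $A(\tau,t)$--harmonic extension of $g$, whose form domain $H^{1/2}$ of the circle does not depend on $t$ --- it has compact resolvent, and its spectrum is a discrete set of real eigenvalues of finite multiplicity with only $+\infty$ as accumulation point; these eigenvalues are exactly the numbers $\mu$ appearing in (\ref{M21ad}). Moreover $t\mapsto S(\tau,t)$ is real--analytic: by Theorem \ref{T11}(iii) and the standing assumptions of Sect. \ref{S24a} the branch $(h(\cdot\,;t),\lambda(t))$ has a local real--analytic reparametrization around every $t\ge 0$ with $h_p>0$ along it, so the coefficients of $A(t)$, $N(t)$ in (\ref{J4aa})--(\ref{J4ab}) and of their twists (\ref{M23b})--(\ref{M23ba}) are real--analytic functions of $(h_q,h_p,\lambda)$, and the solution operator of the twisted Dirichlet problem --- being the inverse of a boundedly invertible operator depending analytically on those coefficients --- is analytic in the branch parameter. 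Hence, along the connected curve parametrizing the Stokes branch, $\{S(\tau,t)\}$ is a real--analytic family of self--adjoint operators with compact resolvent, i.e. a holomorphic family of type (B) in Kato's sense.

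Finally I would invoke the analytic perturbation theory of Rellich and Kato: the eigenvalues of such a family can be arranged into analytic branches $\mu_1(t),\mu_2(t),\dots$ along the parametrizing curve, and any two of them either coincide identically or meet only at isolated parameter values; since in any bounded spectral window there are finitely many branches, an accumulation of coincidence points would force two branches to agree on a subinterval and hence, by the identity theorem along the connected analytic curve, everywhere --- in particular at the point corresponding to $t=0$. But at $t=0$ Proposition \ref{PA20a} gives that the eigenvalues $\mu_n=\kappa\sigma(\tau+n\tau_*)$ are pairwise distinct (this is exactly where $2\tau\neq k\tau_*$ enters, via the evenness and strict monotonicity of $\sigma$ on $(0,\infty)$), so no two branches coincide identically. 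Consequently the set of $t\ge 0$ for which (\ref{M21ad}) has a multiple eigenvalue has no accumulation point in $[0,\infty)$, and off this discrete exceptional set every eigenvalue is simple. I expect the main obstacle to be the second paragraph: checking rigorously that $\{S(\tau,t)\}$ is a genuine Kato--analytic family --- controlling the twisted operator, the weight $h_p$, and the analytic dependence of the Dirichlet--Neumann construction --- together with the bookkeeping that promotes the merely local analytic reparametrizations of the Stokes branch to the global discreteness statement.
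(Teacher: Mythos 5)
Your argument is correct and is essentially the argument the paper intends: the corollary is stated without a written proof, but the surrounding text (and the proof of Theorem \ref{Tmain22}) relies on exactly this combination of real-analytic dependence of the self-adjoint family on the branch parameter, the simplicity of the eigenvalues $\mu_n=\kappa\sigma(\tau+n\tau_*)$ at $t=0$ from Proposition \ref{PA20a}, and the identity theorem for analytic eigenvalue branches. Your second paragraph in fact supplies more detail (Kato type (B) structure, solvability of the twisted Dirichlet problem) than the paper itself records.
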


\subsection{Main result}

Let $\{\tilde{t}_j\}$, $j=1,\ldots,\infty$, be singular points of the operator $S(t)$, i.e. the points where the kernel operator $S(\tilde{t}_j)$ is non-trivial. As is known  the points $\tilde{t}_j$ are isolated (see \cite{BTT}), $\tilde{t}_j\to\infty$ as $j\to\infty$ and the operator $A(t)$ is invertible for $t\neq \tilde{t}_j$. We will use the numeration of these points such that
$$
0\leq \tilde{t}_1<\tilde{t}_2<\cdots
$$

\begin{theorem}\label{Tmain22} (i) There exists a sequence $(\widehat{t}_j, M_j)$, where $\widehat{t}_j\neq \tilde{t}_k$ for all $j$ and $k$; $M_j$ are prime numbers, and
$$
\widehat{t}_j\to\infty,\;\;M_j\to\infty\;\;\mbox{as $j\to\infty$}.
$$
Moreover, $\widehat{t}_j$ is a bifurcation point of the equation
\begin{equation}\label{A29ba}
{\mathcal S}_{M_{j}}(g;t)=0.
\end{equation}

(ii) There exists a sequence $(\widehat{t}_j, M_j)$, where $\widehat{t}_j\neq \tilde{t}_k$ for all $j$ and $k$; $M_j$ are prime numbers, the sequence $\{\widehat{t}_j\}$ is bounded and
$$
M_j\to\infty\;\;\mbox{as $j\to\infty$}.
$$
Furthermore, $\widehat{t}_j$ is a bifurcation point of the equation {\rm (\ref{A29ba})}.

According to {\rm Sect.\ref{SA22a}} the numbers $\widehat{t}_j$ are pairwise different in both cases.
\end{theorem}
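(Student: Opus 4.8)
The plan is to apply the abstract bifurcation theorem (Theorem \ref{ThA5}) to the family of potential operators $S_M(t)$ acting on $C^{2,\gamma}_{Me}(\Bbb R)$, for suitable prime $M$, along the branch of Stokes waves. The skeleton is: (a) fix a large target $N$ for the number of negative eigenvalues; (b) by Proposition \ref{RA19a}, choose $T_\dagger$ so that $\widehat S_M(t_j)$ has at least $N$ negative eigenvalues for every odd $M$ once $t_j>T_\dagger$, while by Proposition \ref{PrA29b}(i) the count stays finite for each fixed $t$; (c) use the decomposition $H^1_{Me}(\Omega_{M,t})=H^1_e(\Omega_{M,t})+{\mathcal X}_M$ from Sect.~\ref{SA22ab} together with Proposition \ref{Pm26a} so that the Morse index of $S_M(t)$ splits as (Morse index of $S(t)$) plus (Morse index of $\widehat S_M(t)$), and the extra part comes from genuinely subharmonic modes; (d) deduce that as $t$ increases past a range containing many of the $\{\tilde t_k\}$ and the newly appearing negative eigenvalues of $\widehat S_M$, the crossing number of the restricted family $\widehat S_M(t)$ through $0$ must be nonzero at some $\widehat t_j$; (e) choose that $\widehat t_j$ so it avoids the (isolated, finite-in-any-bounded-interval) singular set $\{\tilde t_k\}$ of $S(t)$, which is possible because the crossing happens on the ${\mathcal X}_M$ part, invariant and transverse to the $\Lambda_0$-periodic sector. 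That produces a bifurcation point of ${\mathcal S}_M(g;t)=0$ which is not a bifurcation point of the $\Lambda_0$-periodic problem, i.e.\ a true subharmonic bifurcation.

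For part (i), I would run this with $t$ ranging over $[0,\infty)$: the Morse index of $\widehat S_M(t)$ is $0$ near $t=0$ (positivity from the stream solution, cf.\ the computation behind Proposition \ref{PA20a} showing $\mu_n\neq 0$ for $\tau\in(0,\tau_*)$) and grows without bound as $t_j\to\infty$ by Proposition \ref{RA19a}; hence for each large $N$ there is an index $j(N)$ with $j(N)\to\infty$ and a crossing point $\widehat t_{j}\in(0,t_{j(N)})$ of the restricted family, giving $\widehat t_j\to\infty$. To get $M_j\to\infty$ and the $M_j$ prime, I would exploit the observation at the end of Sect.~\ref{SA22ab}: for distinct primes $M_1,M_2$ the kernels of $S_{M_1}(t)$ and $S_{M_2}(t)$ intersect only in $\ker S(t)$; so by varying the prime $M$ one obtains bifurcation points at different locations, and a diagonal/pigeonhole argument over a fixed increasing sequence of primes forces both $\widehat t_j\to\infty$ and $M_j\to\infty$ while keeping the $\widehat t_j$ pairwise distinct. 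For part (ii), I would instead fix attention on a bounded $t$-window: since for each prime $M$ the restricted Morse index of $\widehat S_M$ jumps (from $0$) somewhere in a bounded interval — more precisely, using Proposition \ref{PrA29b}(ii) the first subharmonic negative eigenvalue for period $M\Lambda_0$ appears at some $t_{j(N)}$, but the Floquet/dispersion analysis of Sect.~\ref{SA22ab} shows that as $M\to\infty$ the relevant Floquet exponents $\tau=\tfrac{k}{M}\tau_*$ become dense in $(0,\tau_*)$, so the first crossing for period $M\Lambda_0$ accumulates at a finite $t$ — one gets a bounded sequence $\widehat t_j$ with $M_j\to\infty$. The reference to Corollary \ref{CorA6} ensures the eigenvalue at the crossing is simple away from isolated points, so the crossing number is $\pm1$ and Theorem \ref{ThA5} applies.

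The genuinely hard step is (d)–(e): separating the subharmonic bifurcation points $\widehat t_j$ from the Stokes singular set $\{\tilde t_k\}$ and certifying a \emph{nonzero crossing number} of the restricted family $\widehat S_M(t)$ rather than of $S_M(t)$. The difficulty is that the Morse index of $S_M(t)$ could in principle change at a $\tilde t_k$ without any change in the ${\mathcal X}_M$-sector, so one must show the extra negative eigenvalues supplied by Proposition \ref{RA19a} (whose eigenfunctions, by Remark \ref{R28}, concentrate near the crest and are built from the genuinely $M\Lambda_0$-periodic test functions of Proposition \ref{PrA29b}(ii)) really force a sign change of $\det$ on ${\mathcal X}_M$ at a point disjoint from $\{\tilde t_k\}$. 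This requires the invariance of ${\mathcal X}_M$ under $S_M(t)$ (established in Sect.~\ref{SA22ab} via ${\bf a}_{M,t}$- and ${\bf b}_{M,t}$-orthogonality) so that $\widehat S_M(t)$ is itself a self-adjoint analytic Fredholm family of index zero to which the crossing-number machinery applies independently, plus a counting argument: the net change of the ${\mathcal X}_M$-Morse index over a suitable interval is at least $N>0$ while it can change only at isolated $t$'s, and at a $\tilde t_k$ the ${\mathcal X}_M$-part need not change; choosing the interval's endpoints and discarding the finitely many $\tilde t_k$ inside it isolates a $\widehat t_j\notin\{\tilde t_k\}$ where the ${\mathcal X}_M$-Morse index jumps, hence where the crossing number of $\widehat S_M$ is nonzero, which yields the claimed bifurcation of ${\mathcal S}_{M_j}(g;t)=0$.
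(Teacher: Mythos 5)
Your overall strategy coincides with the paper's: split $H^1_{Me}(\Omega_{M,t})$ into the $\Lambda_0$-periodic sector and its invariant complement ${\mathcal X}_M$, track the Morse index of the restricted family $\widehat S_M(t)$ from $0$ at $t=0$ to arbitrarily large values via Propositions \ref{PrA29b} and \ref{RA19a}, locate a point of nonzero crossing number, and invoke Theorem \ref{ThA5}. You also correctly single out the critical difficulty. But your resolution of that difficulty has a genuine gap. You argue that since ``at a $\tilde t_k$ the ${\mathcal X}_M$-part need not change,'' one can discard the finitely many $\tilde t_k$ in the interval and still find a jump point of the ${\mathcal X}_M$-Morse index outside $\{\tilde t_k\}$. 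That inference is invalid: ``need not change'' is not ``cannot change,'' and nothing in your argument excludes the possibility that \emph{all} of the Morse-index jumps of $\widehat S_M$ on the interval occur exactly at points of $\{\tilde t_k\}$; moreover the kernel of $\widehat S_M(\tilde t_k)$ can be large for large $M$ (many Floquet exponents $k\tau_*/M$ may hit the $\tau$-spectrum of $S(\tilde t_k)$), so you cannot outrun this by taking the index gain $N$ large. The paper's missing ingredient is arithmetic: for a given $t_\dag$ it introduces the set ${\mathcal N}(t_\dag)$ of primes $M$ such that the $\tau$-spectrum of $S(\tilde t_j)$, for every $\tilde t_j\le t_\dag$, avoids all points $k\tau_*/M$, $k=1,\dots,M-1$. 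Since that $\tau$-spectrum is discrete and only finitely many $\tilde t_j$ lie below $t_\dag$, such primes exist; for $M\in{\mathcal N}(t_\dag)$ the kernel of $\widehat S_M(\tilde t_j)$ is trivial at every $\tilde t_j\le t_\dag$, hence the Morse index of $\widehat S_M$ cannot jump there, and every crossing in $(0,t_\dag]$ is automatically disjoint from $\{\tilde t_k\}$. This is the step your proposal lacks; the kernel-intersection remark you quote from Sect.~\ref{SA22ab} serves only to make the $\widehat t_j$ pairwise distinct, not to effect this separation.

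Two smaller discrepancies. For part (ii) you appeal to density of the Floquet exponents $k\tau_*/M$ in $(0,\tau_*)$ to claim the first crossing accumulates at a finite $t$; the paper instead uses the uniformity in $M$ of Proposition \ref{RA19a}: at a single fixed $t_\dag$ the operators $\widehat S_M(t_\dag)$ have negative eigenvalues for \emph{all} odd $M$, so each prime $M\in{\mathcal N}(t_\dag)$ already yields a crossing in $(0,t_\dag]$ and the bound $\widehat t_{k(M)}\le t_\dag$ is immediate. For part (i) the paper chooses an increasing sequence $t_\dag^{(n)}\to\infty$ avoiding $\{\tilde t_j\}$ with $N_-(\widehat S_M(t_\dag^{(n-1)}))<N_-(\widehat S_M(t_\dag^{(n)}))$ for all $M$ and takes $M_n\in{\mathcal N}(t_\dag^{(n)})$, which forces a crossing in $(t_\dag^{(n-1)},t_\dag^{(n)})$ and hence $\widehat t_{k(M_n)}\to\infty$ directly, without your diagonal/pigeonhole step.
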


\subsection{Proof} Let as before $\tilde{t}_j$, $j=1,\ldots,\infty$, be singular points for the operator function $S(t)$. From Theorem \ref{ThMa} it follows that $\tilde{t}_j\to\infty$ as $j\to\infty$.  By \cite{ShaSo} (see Proposition 3.4), the $\tau$ spectrum of the operators $S(t,\tau)$ consists of isolated  eigenvalues of finite algebraic multiplicities for every $t\geq 0$.

Let $M$ be a prime number. Consider the problem (\ref{A23a}) and the corresponding Frechet derivative $S_M(t)$ of the operator ${\mathcal S}_M(t)$. We use the splitting (\ref{A21a}) involving invariant subspaces of the self-adjoint operator $S_M(t)$ and the restriction operator $S_M(t)$ on ${\mathcal X}$, which was denoted by $\widehat{S}_M(t)$.
Introduce the singular points of the operator function $\widehat{S}_M(t)$ and denote them by $\widehat{t}_k$, $k=1,2,\ldots$.
Let us show that $\widehat{t}_k$ are isolated points. Indeed the operator  $\widehat{S}_M(t)$ is self-adjoint and its eigenvalues $\mu_j(t)$ depends analytically on $t$. If a certain $\mu_j$ has infinitely many zeros in a neighborhood of $\widehat{t}_k$ then it is identically zero for all $t$ which contradicts to Proposition \ref{PA20a}, which implies that $\mu_j(0)\neq 0$. %   if there is an eigenvalue $\widehat{\mu}(t)$ which %is analytic in a neighborhood of $\widehat{t}_k$ and which is equal to $0$ on this interval, then is zero for all $t$. Therefore %$\widehat{\mu}(0)=0$ but this is impossible by Proposition \ref{RA19a}.

Let $t_\dag >0$ and $t_\dag\neq \tilde{t}_j$, $j=1,\ldots,\infty$. Denote by ${\mathcal N}(t_\dag)$ the set of all prime numbers $M$ such that the $\tau$-spectrum of the operators $S(\tilde{t}_j)$ for all $\tilde{t}_j\leq t_\dag$ does not contain the points $k\tau_*/M$, $k=1,2,\ldots,M-1$. Then the
 kernel of the operator $\widehat{S}_M(\tilde{t}_j)$ for $\tilde{t}_j\leq t_\dag$ is trivial. %because all real $\tau$-eigenvalues of the problem (\ref{M21a}) in the interval $(0,\tau_*)$ are different from $k\tau_*/M$, $k=1,2,\ldots,M-1$, due to the choice of $M$.
 This means that
\begin{equation}\label{Okt15a}
\widehat{t}_k\neq \tilde{t}_j\;\;\mbox{for $\tilde{t}_j\leq t_\dag$, $\widehat{t}_k\leq t_\dag$ and $M\in {\mathcal N}(t_\dag)$ }.
\end{equation}

(ii) We choose $t_\dag$ and $M\in {\mathcal N}(t_\dag)$. %sufficiently large prime number such that $\tau$ eigenvalues of the operator pencil %$S(\tilde{t}_j,\tau)$ for $t_j\in [0,t_\dag]$ and $\tau\in [0,\tau_*)$ have no eigenvalues  equals to $k\tau_*/M$ for $k=1,2,\ldots,M-1$.
By Proposition \ref{PA20a} all eigenvalues of the operators $\widehat{S}_M(t_\dag)$ are positive for small $t$. According to Proposition \ref{RA19a},
the operator $\widehat{S}_M(t_\dag)$ has negative eigenvalues  for sufficiently large $t_\dag$ lying near $t_j$ from Theorem \ref{ThMa}. By Proposition \ref{PrA29b}(ii) it can be chosen in such a way that all operators $\widehat{S}_M(t_\dag)$ has negative eigenvalues with arbitrary $M$.  This implies that for every $M\in \widehat{S}_M(t_\dag)$ there exists $\widehat{t}_k$ satisfying (\ref{Okt15a}) with the crossing number different from zero. We denote this number by $k(M)$. Now reference to  Theorem \ref{ThA5} proves existence of subharmonic bifurcatins with period $M\in{\mathcal N}(t_\dag)$ at the bifurcation points $\widehat{t}_{k(M)}$.

(i) Now let us choose $t_\dag^{(n)}$, $n=1,2,\ldots$, in the following way. First, they satisfy
$$
t_\dag^{(n)}\neq \tilde{t}_j\;\;\mbox{for all $n$ and $j$}\;\;0<t_\dag^{(1)}<t_\dag^{(2)}<\cdots\;\;t_\dag^{(n)}\to\infty\;\;\mbox{as $n\to\infty$}.
$$
Second, we assume that the number of negative eigenvalues of  the operators $\widehat{S}_M(t_\dag^{(1)})$ is positive. Applying  Proposition \ref{PrA29b}, we conclude that if $t_\dag^{(n-1)}$ is chosen then $t_\dag^{(n)}$ can be chosen to satisfy
\begin{equation}\label{Okt15b}
N_-(\widehat{S}_M(t_\dag^{(n-1)}))<N_-(\widehat{S}_M(t_\dag^{(n)})) \;\;\mbox{for all $M$},
\end{equation}
where $N_-$ denotes the number of negative eigenvalues.

Next we choose the prime numbers $M_n\in {\mathcal N}(t_\dag^{(n)})$. Due to (\ref{Okt15b}) there exist  there exists $\widehat{t}_k$, $k=k(M_n)$, satisfying %(\ref{Okt15a})
$$
t_\dag^{(n-1)}<\widehat{t}_{k(M_n)}<t_\dag^{(n)}
$$
with the crossing number different from zero. The reference to  Theorem \ref{ThA5} proves existence of subharmonic bifurcations with period $M_n\in{\mathcal N}(t_\dag^{(n)})$ at the bifurcation point $\widehat{t}_{k(M_n)}$, $n=1,2,\ldots$.

\bigskip
\noindent
{\bf Acknowledgements} The author was supported by the Swedish Research Council (VR), 2017-03837.

\bigskip
\noindent
{\bf Data availability statement} Data sharing not applicable to this article as no datasets were generated or analysed during the current study.

\section{References}

{

\end{document}